\documentclass[a4paper]{article}

\usepackage{float}
\usepackage{dsfont}
\usepackage{amssymb}
\usepackage{latexsym}
\usepackage{amsmath}
\usepackage{color}
\usepackage{comment}
\usepackage{amsthm}
\usepackage{graphicx} 
\usepackage{layout} 
\usepackage{ulem}
\usepackage{enumerate}
\usepackage{bm}
\usepackage{bbm} 
\usepackage[bbgreekl]{mathbbol} 
\usepackage{authblk}
\usepackage[pdfborder={1 1 1}]{hyperref}  
\usepackage{multirow}
\usepackage{hyperref}

\newif\ifrs
\rstrue
\ifrs \usepackage{mathrsfs} \fi  
\newif\ifcol
\coltrue 

\newtheorem{theorem*}{Theorem}[section]

\newtheorem{note*}[theorem*]{Note}
\newtheorem{lemma*}[theorem*]{Lemma}
\newtheorem{definition*}[theorem*]{Definition}
\newtheorem{proposition*}[theorem*]{Proposition}
\newtheorem{corollary*}[theorem*]{Corollary}
\newtheorem{remark*}[theorem*]{Remark}
\newtheorem{example*}[theorem*]{Example}

\newtheorem{assk}{ASS(k)}

\DeclareMathOperator*{\argmax}{argmax}

\numberwithin{equation}{section}
\newif\ifcol
\coltrue
\ifcol
\newcommand{\colorr}{\color[rgb]{0.8,0,0}}

\newcommand{\colorn}{\color[rgb]{1,1,1}}



%
\else

\newcommand{\colorr}{\color{black}}

\newcommand{\colorn}{\color{black}}

\fi
\excludecomment{en-text}
\includecomment{jp-text}
\includecomment{comment}
\setlength{\textwidth}{6.7in}
\setlength{\oddsidemargin}{-0.2in}
\setlength{\topmargin}{-0.52in}
\setlength{\textheight}{9.0in}
\setlength{\footskip}{0.5in}

%

\newcommand{\ep}{\epsilon}

\newcommand{\iku}{\rightarrow}

%

\newcommand{\im}{\item}
%




\newcommand{\reels}{\mathbb{R}}
\newcommand{\naturels}{\mathbb{N}}
\newcommand{\relatifs}{\mathbb{Z}}
\newcommand{\complex}{\mathbb{C}}

\newcommand{\idx}{\mathbb{n}}
\newcommand{\dims}{\mathbb{a}}

\newcommand{\mle}{\hat{\theta}_T}



\begin{document}

\title{Hawkes process and Edgeworth expansion with application to maximum likelihood estimator}
\author{Masatoshi Goda\footnote{Graduate School of Mathematical Sciences, University of Tokyo: 3-8-1 Komaba, Meguro-ku, Tokyo 153-8914, Japan. e-mail: goda@ms.u-tokyo.ac.jp}}
\affil{Graduate School of Mathematical Sciences, University of Tokyo\\
Japan Science and Technology, CREST, Japan
}
\maketitle

\begin{abstract}
We provide a rigorous mathematical foundation of the theory for the higher-order asymptotic behavior of the one-dimensional Hawkes process with an exponential kernel. As an important application, we give the second-order asymptotic distribution for the maximum likelihood estimator of the exponential Hawkes process.
\end{abstract}

\section{Introduction}
The Hawkes process was introduced by \cite{Hawkes}. It has a self-exciting property and has been used to model earthquakes and their aftershocks, events in social media, infectious diseases and so on.  Furthermore, in the field of finance, the multivariate Hawkes process has also been used for modeling the whole limit order book; for example, see \cite{LOB}. 

Regarding statistical inference for the Hawkes process, the quasi maximum likelihood estimator (QMLE) and the quasi Bayesian estimator (QBE) are practical. \cite{ClinetYoshida} established the consistency, the asymptotic normality and the convergence of moments of these estimators for the multivariate Hawkes process with exponential kernels. Roughly, the asymptotic normality of an estimator $\hat{\theta}_T$ is characterized as follows.
\begin{eqnarray*}
\left|E\left[f\left(\sqrt{T}(\hat{\theta}_T - \theta_0)\right)\right] - \int f(x) \phi(x; g^{-1})dx\right| = o(1)
\end{eqnarray*}
for some appropriate functions $f$, where $g$ is the Fisher information matrix, $\phi(x; g^{-1})$ is the probability density function of the normal distribution $N(0, g^{-1})$ and $\theta_0$ is the true parameter.

As a real problem, there are often situations where we can not obtain data with sufficient observation time. In such cases, it is not appropriate to approximate the error distribution of an estimator by the normal distribution. Then, the derivation of confidence intervals and hypothesis testing cannot be performed accurately. If we establish a theory of Edgeworth expansion for the distribution of an estimator $\hat{\theta}_T$, we get an improved error evaluation. Edgeworth expansion is obtained by formally expanding the characteristic function and applying the inverse Fourier transform. Roughly, we may have the following evaluation in the case of the second-order expansion.
\begin{eqnarray}
\label{aym ineq}
\left|E\left[f\left(\sqrt{T}(\hat{\theta}_T - \theta_0)\right)\right] - \int f(x) q_{T,3}(x)dx\right| = o(T^{-1/2}),
\end{eqnarray}
for some appropriate functions $f$, where $q_{T,3}(x)dx$ is some signed measure. \\*

In this paper, we deal with the one-dimensional Hawkes process with an exponential kernel and establish the Edgeworth expansion for the distribution of its maximum likelihood estimator (MLE). The outline of the concrete proofs is as follows. 

In the case of independent identical distribution, the validity of the Edgeworth expansion for an MLE is reduced to the expansion of a log-likelihood process. Same as the i.i.d. case, \cite{sakyos} gave the asymptotic expansion for an M-estimator of a functional of an $\ep$-Markov process with a mixing property. In the case of MLE, the essence of the theory of the asymptotic expansion lies in the analysis of a log-likelihood process. With this background, we deal with the asymptotic expansion for the class of functionals of the Hawkes process containing the derivatives of the log-likelihood process. 

To prove the validity of (\ref{aym ineq}), we prepare the theory of Edgeworth expansion for the distribution of a functional of a geometric mixing process. For discrete-time processes, the scheme of this theory was established by  \cite{GotzeHipp}. It was extended to a continuous-time case in \cite{KusuokaYoshida}. Moreover, \cite{Yoshidaparmix} dealt with a more general framework. We will reduce their theory to a simple framework without the Cram\'er-condition. Furthermore, we have developed a framework that is not confined to the non-degeneracy of variance by appropriately modifying the variance of the random variables.

Second, we will apply the theory of the Edgeworth expansion to the derivatives of the log-likelihood process of the exponential Hawkes process. In this application, we introduce the Hawkes core process.  In the proof of this theory, it is essential to confirm the conditions regarding the mixing property and the finiteness of moments of the Hawkes core process. The mixing property of the Hawkes core process follows from its Markovian property and geometric ergodicity. 
It is known that the exponential Hawkes intensity process has the Markovian property, see \cite{Oakes1975}. However, we introduce a new proof including a method applicable to the Hawkes core process. 
We investigate these properties by using the idea of the extended generator. 

Finally, we give the second-order asymptotic distribution for the MLE of the exponential Hawkes process. In this regard, we confirm some conditions for the log-likelihood process.

Numerical calculations of the asymptotic distribution using the Monte Carlo method are also presented. Furthermore, the results of the simulations with R confirm that the asymptotic distribution we introduced is a better approximation than the approximation by the normal distribution.\\*

Section 2 presents a theory of the Edgeworth expansion for the distribution of a functional of a geometric mixing process. It also describes how to apply the Edgeworth expansion to an MLE.
In Section 3, we see the properties of the one-dimensional Hawkes process with an exponential kernel. It is difficult to directly express the derivative of the log-likelihood process as a functional of the Hawkes intensity. Therefore, we introduce the Hawkes core process and also investigate its properties.
In Section 4, we will apply the theory of the Edgeworth expansion to a functional of the Hawkes core process. In particular, we give the concrete form of the second-order asymptotic distribution for the MLE of the exponential Hawkes process.
Finally, Section 5 shows the simulation results about the second-order asymptotic distribution for the MLE of the exponential Hawkes process.
The details of the proofs in each section are summarized in Appendix.
\

\section{Asymptotic expansion}

\subsection{Asymptotic expansion under geometric mixing condition}

In this section, we introduce the theory of Edgeworth expansion for the distribution of a functional of a geometric mixing process.  The following framework is given by Theorem 2.10 in \cite{GotzeHipp} for discrete-time processes.  It is extended to a continuous-time case under the conditional Cram\'er-condition in \cite{Yoshidaparmix}. In this paper, we rewrite this theory without using the Cram\'er-condition, referring to \cite{GotzeHipp1978}. 

Let $(\Omega, \mathscr{F}, P)$ be a probability space. Assume that we are given $\sigma$-fields $\{\mathscr{B}_I\}$ indexed by intervals $I \subset \reels_+$. We consider a process $Z = (Z_t)_{t \in \reels_+}:\Omega \times \reels_+ \iku \reels^d$ whose increment is adapted to $\mathscr{B}_I$, namely $Z_I = Z_t - Z_s \in \mathscr{F}\mathscr{B}_I$\footnote{$\mathscr{F}\mathscr{B}$ denote the set of $\mathscr{B}$-measurable functions.} for every closed interval $I = [s, t] \subset \reels_+$ with $s<t$ and $Z_0 \in \mathscr{F}\mathscr{B}_{\{0\}}$. We will derive the asymptotic expansion for the distribution of the normalized process $S_T = Z_T/\sqrt{T}$.  For this purpose, we assume that the following two conditions hold.

\begin{description}
\im[[A1\!\!]](Geometric mixing property)\\
There exists a positive constant $a$ such that for any $s, t \in \reels_+$ with $s \le t$, and for any $f \in \mathscr{F}\mathscr{B}_{[0,s]}$ and $g \in \mathscr{F}\mathscr{B}_{[t,\infty)}$ with $\left\| f \right\|_{\infty} \le 1$ and $\left\| g \right\|_{\infty} \le 1$,
\begin{eqnarray*}
\left| E[fg] - E[f]E[g] \right| \le a^{-1}e^{-a(t-s)}.
\end{eqnarray*}		

\im[[A2\!\!]](Moment property)\\
$\sup_{t \in \reels_+, 0 \le h \le \Delta} \left\| Z_{[t, t+h]}\right\|_{L^p(P)} < \infty$ and $E[Z_{[t, t+\Delta]}]=0$ for any $\Delta > 0$ and $p > 0$. Moreover, $Z_0 \in \bigcap_{p>1}L^p(P)$ and $E[Z_0]=0$.\\
\end{description}

These conditions [A1] and [A2] are needed for the validity of the formal Edgeworth expansion. Before starting the statement of the asymptotic expansion, we prepare some notation under the condition [A2]. The $r$-th cumulant functions $\chi_{T,r}(u)$ of $S_T$ are defined by
\begin{eqnarray*}
\chi_{T,r}(u) = \left.\left(\frac{d}{d\epsilon}\right)^r\right|_{\epsilon = 0} \log E\left[e^{i\epsilon u' S_T}\right],
\end{eqnarray*}
where $u'$ represents the transpose of $u$. Then, since $\chi_{T,1}(u) = E[iu' S_T] = 0$, the characteristic function of $S_T$ is formally expressed as
\begin{eqnarray*}
E\left[e^{iu' S_T}\right] 
&= \left.\exp\left(\log E\left[e^{i\ep u' S_T}\right] \right)\right|_{\ep = 1}
&= \left.\exp\left(\sum^{\infty}_{r=2} r!^{-1}\ep^{r-2}\chi_{T,r}(u) \right)\right|_{\ep = 1}.
\end{eqnarray*}
Next, we define functions $\tilde{P}_{T,r}(u)$ by the formal Taylor expansion at $\ep= 0$:
\begin{eqnarray}
\exp\left( \sum^{\infty}_{r=2} r!^{-1}\ep^{r-2}\chi_{T,r}(u)\right) = \exp\left(\frac{1}{2}\chi_{T,2}(u)\right) + \sum^{\infty}_{r=1} \ep^rT^{-\frac{r}{2}}\tilde{P}_{T,r}(u),
\label{formal Taylor expansion}
\end{eqnarray}
where  
\begin{eqnarray}
\tilde{P}_{T,r}(u) = \exp\left(\frac{1}{2}\chi_{T,2}(u)\right)\sum_{l=1}^r \sum_{\substack{r_1, \dots, r_l \in \naturels; \\ r_1+ \cdots + r_l = r}}\frac{\chi_{T, r_1+2}(u)\cdots\chi_{T, r_l+2}(u)}{l!(r_1+2)!\cdots(r_l+2)!}.
\end{eqnarray}
Let $\hat{\Psi}_{T,p}(u)$ be the partial sum of the right-hand side of (\ref{formal Taylor expansion}) with $\ep = 1$:
\begin{eqnarray}
\hat{\Psi}_{T,p}(u) = \exp\left(\frac{1}{2}\chi_{T,2}(u)\right) + \sum^{p-2}_{r=1} T^{-\frac{r}{2}}\tilde{P}_{T,r}(u).
\end{eqnarray}
We want to define a signed measure $\Psi_{T,p}$ as the Fourier inversion of $\hat{\Psi}_{T,p}(u)$. However, when $\chi_{T,2}(u)$ is not negative definite, $\Psi_{T,p}$ does not have the density function. To overcome this problem, we set $\Sigma_{T,D} = Var[S_T] + T^{-D}I$ for a positive constant $D$, where $I$ is an identity matrix. Then, we define 
\begin{eqnarray}
\label{Edge}
\hat{\Psi}_{T,p,D}(u) = \exp\left(-\frac{1}{2}u'\Sigma_{T,D}u\right) + \sum^{p-2}_{r=1} T^{-\frac{r}{2}} \exp\left(-\frac{1}{2}u'\Sigma_{T,D}u\right)\sum_{l=1}^r \sum_{\substack{r_1, \dots, r_l \in \naturels; \\ r_1+ \cdots + r_l = r}}\frac{\chi_{T, r_1+2}(u)\cdots\chi_{T, r_l+2}(u)}{l!(r_1+2)!\cdots(r_l+2)!}.
\end{eqnarray}
The cumulant functions also have the following representation,
\begin{eqnarray}
\label{2.3.A}
\chi_{T,k}(u) = i^k\sum_{a_1, \dots, a_k = 1}^d u_{a_1}\cdots u_{a_k}\lambda^{a_1\cdots a_k;}_T,
\end{eqnarray}
where $u = (u_1, \dots, u_d)$ and $\lambda^{a_1\cdots a_k;}_T$ is the $(a_1, \dots, a_k)$-cumulant of $S_T$, i.e.,
\begin{eqnarray*}
\lambda^{a_1\cdots a_k;}_T = (-i)^k\left.\frac{\partial^k}{\partial u_{a_1}\cdots \partial u_{a_k}}\right|_{u_{a_1}=\cdots=u_{a_k} = 0} \log E\left[e^{iu' S_T}\right].
\end{eqnarray*}
Consider the order of convergence, we put
\begin{eqnarray}
\label{modi cumulant}
\kappa^{a_1\cdots a_k;}_T = T^{(m-2)/2}\lambda^{a_1\cdots a_k;}_T.
\end{eqnarray}
Let $h_{a_1 \dots a_k}(z; \Sigma)$ be the Hermite polynomials, i.e.
\begin{eqnarray}
\label{Hermite}
h_{a_1 \dots a_k}(z; \Sigma) = \frac{(-1)^k}{\phi(z;\Sigma)}\frac{\partial^k}{\partial z_{a_1}\cdots \partial z_{a_k}}\phi(z;\Sigma),
\end{eqnarray}
where $\phi(x; \Sigma)$ is the probability density function of the normal distribution $N(0,\Sigma)$. We define a signed measure $\Psi_{T,p,D}$ as the Fourier inversion of $\hat{\Psi}_{T,p,D}(u)$. Then, from (\ref{Edge}), (\ref{2.3.A}), (\ref{modi cumulant}) and (\ref{Hermite}), the density function $p_{T,p,D}(z)$ of $\Psi_{T, p,D}$ is written as
\begin{eqnarray}
\label{p}
p_{T,p,D}(z) = \phi(z;\Sigma_{T,D}) + \sum_{r=1}^{p-2} T^{-\frac{r}{2}} \left\{ \sum_{l=1}^r \sum_{\substack{r_1, \dots, r_l \in \naturels; \\ r_1+ \cdots + r_l = r}}\frac{\kappa^{A_{r_1 + 2};}_T \cdots \kappa^{A_{r_l + 2};}_T }{l!(r_1+2)!\cdots(r_l+2)!}h_{A_{r_1 + 2} \cdots A_{r_l + 2}}(z;\Sigma_{T,D})\phi(z;\Sigma_{T,D}) \right\}
\end{eqnarray}
where $A_k$ represents the index sequence $a_1\cdots a_k$ and if there are same index sequences, summing up them with respect to $a_1\cdots a_k$ in accordance with the Einstein summation convention. Furthermore if there are different multiple index sequences, we distinguish them, for example,
\begin{eqnarray*}
K^{A_i;}L^{A_j;}M_{A_iA_j} = \sum_{a_1, \dots, a_i, a'_1, \dots, a'_j= 1}^d K^{a_1 \dots a_i;}L^{a'_1 \dots a'_j;}M_{a_1 \dots a_i a'_1 \dots a'_j}.
\end{eqnarray*}

For a vector of nonnegative integers $\alpha = (\alpha_1, \dots, \alpha_d)$, $t \in \reels^d$ and $f \in C^{|\alpha|}(\reels^d)$, let
\begin{eqnarray*}
|\alpha| = \sum_{i=1}^d \alpha_i, \quad
t^{\alpha} = \prod_{i=1}^dt^{\alpha_i} \quad \text{and} \quad \partial^\alpha f = \frac{\partial^{|\alpha|}f}{\partial x_1^{\alpha_1} \cdots \partial x_d^{\alpha_d}}.
\end{eqnarray*}
For positive constants $\Gamma, L_1, L_2$, we denote by $\mathscr{E}(\Gamma, L_1, L_2)$ a set of functions $f \in C^{\Gamma}(\reels^d)$ with $\sup_{|\alpha| \le \Gamma}| \partial^{\alpha} f(x)| \le L_2(1+|x|)^{L_1}$ for every $x \in \reels^d$. The following theorem is the main statement in this subsection. A proof can be found in Appendix. 

\begin{theorem*}
\label{Main Thm}
Let $p \in \naturels$ with $p \ge 2$ and $L_1, L_2>0$. Suppose that the conditions \textnormal{[A1]} and \textnormal{[A2]} are satisfied. Then, there exist  $D>0$ and $\Gamma \in \naturels$ such that for any $f \in \mathscr{E}(\Gamma, L_1, L_2)$,
\begin{eqnarray*}
\left| E\left[ f\left(S_T\right)\right] - \int_{\reels^d}f(z) p_{T,p,D}(z)dz \right| =  o\left(T^{-(p-2)/2}\right).
\end{eqnarray*}
\end{theorem*}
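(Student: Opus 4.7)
The plan is to reduce the problem to a Fourier-side estimate and handle two ranges of frequency separately. Since $f \in \mathscr{E}(\Gamma, L_1, L_2)$ grows only polynomially, I would first truncate $f$ by multiplying by a smooth cut-off supported on a ball of radius $R_T = T^{\gamma}$: assumption A2 together with a Rosenthal-type inequality for sums under the geometric mixing A1 gives uniformly bounded $q$-th moments of $S_T$ for every $q$, so the probabilistic tail is $O(R_T^{L_1 - q})$; the corresponding tail of $\int f\cdot p_{T,p,D}$ is controlled through the Gaussian factor $\phi(z; \Sigma_{T,D})$ and its Hermite polynomial corrections in (\ref{p}). Both tails become $o(T^{-(p-2)/2})$ once $q$ is large enough.

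After truncation, the difference $E[f(S_T)] - \int f(z)\, p_{T,p,D}(z)\, dz$ is bounded by $\int |\hat f(u)| \cdot |\varphi_T(u) - \hat\Psi_{T,p,D}(u)|\, du$, where $\varphi_T(u) = E[e^{iu'S_T}]$. On the low-frequency range $|u| \le A_T := T^{1/2 - \delta}$ I would partition $[0,T]$ into alternating long blocks of length $\ell_T \asymp \log T$ and separating gaps of length $h_T \asymp \log T$; with $h_T$ chosen large enough, A1 forces the characteristic function to factorise across blocks up to an error $o(T^{-N})$ for any $N$. Combined with cumulant bounds inherited from A2, namely $\lambda_T^{a_1 \cdots a_r;} = O(T^{-(r-2)/2})$, this yields a uniform cumulant expansion of $\log \varphi_T(u)$ on $|u| \le A_T$; exponentiating and Taylor-expanding in $T^{-1/2}$ reproduces the formal object $\hat\Psi_{T,p}$, and the passage to $\hat\Psi_{T,p,D}$ only contributes the factor $\exp(-T^{-D}|u|^2/2)$, whose deviation from $1$ is $O(T^{-D}|u|^2)$ and hence negligible on this range.

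For the high-frequency range $|u| > A_T$, the absence of a Cram\'er condition is bypassed, following the scheme of \cite{GotzeHipp1978}, by exploiting the smoothness of $f$: integration by parts $\Gamma$ times yields $|\hat f(u)| \lesssim R_T^{d + L_1} \, |u|^{-\Gamma}$ for the truncated function, and with the trivial bound $|\varphi_T(u)| \le 1$ together with the polynomial bound $|\hat\Psi_{T,p,D}(u)| \lesssim (1+|u|)^M$ for an $M$ determined by the degrees appearing in the Hermite expansion, the high-frequency contribution is at most $R_T^{d+L_1}\, A_T^{M + d - \Gamma}$. Choosing $\Gamma$ large enough relative to $p$, $d$, $L_1$, $M$, $\gamma$ and $\delta$ reduces this to $o(T^{-(p-2)/2})$.

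The main obstacle is the potential degeneracy of $\mathrm{Var}[S_T]$: without regularisation, $\Psi_{T,p}$ need not have a density and the Fourier-inversion step breaks down. The correction $\Sigma_{T,D} = \mathrm{Var}[S_T] + T^{-D} I$ fixes this but creates a tension among the parameters: $D$ must be small enough for the replacement $\hat\Psi_{T,p} \leftrightarrow \hat\Psi_{T,p,D}$ to stay below $T^{-(p-2)/2}$ on low frequencies, while $\Gamma$ must simultaneously be chosen large enough (depending on $D$, $L_1$, $d$, and the maximal polynomial degree $M$) to absorb the high-frequency tail. Verifying that the block-factorisation error, the cumulant-truncation error, and these parameter-dependent perturbations combine consistently is the technical heart of the argument, and is precisely what yields the existence of $D$ and $\Gamma$ asserted in the theorem.
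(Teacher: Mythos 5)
There is a genuine gap, and it sits exactly where you locate the ``technical heart'': the low-frequency range $|u|\le A_T=T^{1/2-\delta}$ is far too large for the estimate you claim there. The cumulant bounds give $|\chi_{T,r}(u)|\lesssim T^{-(r-2)/2}|u|^{r}$, so already $|\chi_{T,3}(u)|\asymp T^{-1/2}|u|^{3}$ is of order $T^{1-3\delta}$ at the edge of your range; exponentiating the cumulant series and ``Taylor-expanding in $T^{-1/2}$'' to reproduce $\hat\Psi_{T,p}$ is only legitimate for $|u|\lesssim T^{1/6}$ at best. In the classical i.i.d.\ theory the region up to $c\sqrt{T}$ is rescued by the Gaussian damping factor $e^{-c|u|^{2}}$, but that factor is precisely what is unavailable here: the theorem is designed to cover degenerate $\mathrm{Var}[S_T]$, the regularisation only gives $e^{-T^{-D}|u|^{2}/2}$, which is essentially $1$ on $|u|\le T^{1/2-\delta}$ once $D$ is large, and without a Cram\'er-type condition there is also no smallness of $|\varphi_T(u)|$ on the intermediate band $T^{\delta}\le|u|\le T^{1/2-\delta}$. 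So the quantity $\int_{|u|\le A_T}|\hat f(u)|\,|\varphi_T(u)-\hat\Psi_{T,p,D}(u)|\,du$ cannot be controlled by the argument you describe. This is not how G\"otze--Hipp (1978) bypass the Cram\'er condition: their device (and the paper's) is to Taylor-expand $f$ against a band-limited smoothing kernel whose Fourier transform is supported in $\{|u|<T^{\delta}\}$ with $\delta$ arbitrarily small, so that the characteristic-function comparison is only ever needed on $|u|<T^{\delta}$ (where the paper's Proposition~6.4 gives a uniform bound $\lesssim T^{-(p-2)/2-\delta_0}$ with $\delta_0>d\delta$), the price being $\Gamma\approx(p-1)/(2\delta)$ derivatives of $f$ to kill the Taylor remainder of size $(T^{-\delta})^{\Gamma}$.

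Your proposal is repairable rather than hopeless: if you move the cutoff down to $A_T=T^{\delta}$, then your integration-by-parts bound $|\hat f(u)|\lesssim R_T^{d+L_1}|u|^{-\Gamma}$ for the spatially truncated $f$, together with $|\varphi_T(u)|\le1$ and the polynomial bound on $\hat\Psi_{T,p,D}$, controls everything above $T^{\delta}$ by $R_T^{d+L_1}T^{\delta(d+M-\Gamma)}$, and the region $|u|\le T^{\delta}$ is exactly what the mixing/cumulant machinery can handle; this is a legitimate variant of the paper's smoothing argument. Two further corrections: the constraint on $D$ runs the other way --- $D$ must be taken \emph{large} (the paper chooses $D$ and $L_3$ sufficiently large), since larger $D$ makes both the perturbation $e^{-T^{-D}|u|^{2}/2}-1$ and the discrepancy between $\chi_{T,2}(u)$ and $-\tfrac12 u'\Sigma_{T,D}u$ smaller; and note that the paper truncates the increments $Z_i$ (introducing the recentering $e_T$) rather than truncating $f$, because the cumulant estimates under mixing are proved for bounded summands and then transferred back to $S_T$.
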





\subsection{Asymptotic expansion for maximum likelihood estimator}

Applying Theorem \ref{Main Thm}, we consider getting the asymptotic expansion for a maximum likelihood estimator (MLE) up to the second order. We refer to \cite{sakyos} to construct the following framework.

Let $(\Omega, \mathscr{F}, P)$ be a probability space, $\Theta \subset \reels^p$ be an open bounded convex set and $N_t(\theta)$ be a $\reels^d$-valued stochastic process parameterized by $\theta \in \Theta$. We assume that the log-likelihood process of $N_t(\theta_0)$ is given by $l_T : \Theta \times \Omega \iku \reels$, where $\theta_0 \in \Theta$ is the true parameter. Then, the MLE $\mle$ is defined by 
\begin{eqnarray*}
\mle(\omega) = \argmax_{\theta \in \Theta} l_T(\theta, \omega) \quad \text{for $\omega \in \Omega$}.
\end{eqnarray*}
If there is no confusion, we write $l_T(\theta) = l_T(\theta, \omega)$. For $r \in \naturels$ and a sequence of indexes $\dims = (a_1 ,\dots, a_r) \in \{1, \dots, p\}^r$, we write
\begin{eqnarray}
\label{D}
\mathbb{D}^\dims = \frac{\partial^r}{\partial \theta^{a_1;} \cdots \partial \theta^{a_r;}},
\end{eqnarray}
where $\theta^{a;}$ is the $a$-th component of $\theta \in \Theta$. Moreover, $\partial_\theta$ denotes the vector differential operator (the gradient operator). Let $l_{a_1 \dots a_k}(\theta) = \mathbb{D}^{(a_1, \dots, a_k)}l_T(\theta)$ and $\nu_{a_1 \dots a_k}(\theta) = E\left[\frac{1}{T}l_{a_1 \dots a_k}(\theta)\right]$. We write $g_T = (g_{ab})_{a,b=1, \dots, p} = (-\nu_{ab}(\theta_0))_{a,b=1, \dots, p}$. We assume that 
\begin{description}
\im[[A3\!\!]] \quad $|g_T - g| \to 0$ as $T \to \infty$, where the norm $|\cdot|$ is the Frobenius norm and $g$ is a non-singular matrix.
\end{description}
\begin{description}
\im[[B0\!\!]] \quad $l_T(\theta)$ satisfies the following conditions.
\begin{enumerate}
\setlength{\itemindent}{0pt}  
\renewcommand{\labelenumi}{(\roman{enumi})} 
\item $l_T \in C^4(\Theta) \ a.s.$
\item the score function $\partial_\theta l_T(\theta)$ satisfies $E\left[\partial_\theta l_T(\theta_0)\right]=0$.
\item $Var\left[\frac{1}{\sqrt{T}}\partial_\theta l_T(\theta_0)\right] = g_T$.
\item $\mathbb{D}^c\nu_{ab}(\theta) = \nu_{abc}(\theta)$.
\end{enumerate}
\end{description}
Under the condition [A1], $g_T$ is non-singular for sufficiently large $T$.  We write $g_T^{-1} = (g^{ab;})_{a,b=1, \dots, p}$. The following conditions are assumed for some positive constants $q_1, q_2, q_3$ and $\gamma$.
\begin{description}
\im[[B1\!\!]]$_{q_1}$ \quad $\sup_{T>0}\left\|T^{-\frac{1}{2}}l_a(\theta_0)\right\|_{L^{q_1}(P)} < \infty
$\quad for $a\in \{1, \dots, p\}$.

\im[[B2\!\!]]$_{q_2, \gamma}$ \quad $\sup_{T>0, \theta \in \Theta}\left\|T^{\frac{\gamma}{2}}\left(T^{-1}l_{a_1 \cdots a_k}(\theta) - \nu_{a_1 \cdots a_k}(\theta) \right) \right\|_{L^{q_2}(P)} < \infty$\quad for  $k=2,3$, $a_1, \dots, a_k \in \{1,\dots p\}$.

\im[[B3\!\!]] \quad There exist an open set $\tilde{\Theta}$ including $\theta_0$ and a positive constant $T_0$ such that
\begin{eqnarray*}
\inf_{T>T_0, \theta_1, \theta_2 \in \tilde{\Theta},|x|=1} \left| x' \int^1_0  \nu_{a b}\left(\theta_1 + s(\theta_2 - \theta_1)\right)ds\right| > 0.
\end{eqnarray*}

\im[[B4\!\!]]$_{q_3}$ \quad $\sup_{T>0}\left\|\sup_{\theta \in \Theta}\left|T^{-1}l_{a_1 \cdots a_4}(\theta)\right|\right\|_{L^{q_3}(P)} < \infty$\quad, $a_1, \dots, a_4 \in \{1,\dots p\}$.
\end{description}
\

To get the asymptotic expansion of an MLE, we approximate the MLE with the sum of log-likelihood processes. Let 
\begin{eqnarray*}
Z_a = \frac{1}{\sqrt{T}}l_a(\theta_0)
\quad \text{and} \quad
Z_{ab} =\sqrt{T}\left(\frac{1}{T}l_{ab}(\theta_0) - \nu_{ab}(\theta_0)\right).
\end{eqnarray*}
With the Einstein summation convention, for an index sequence $A$, we write
\begin{eqnarray*}
\nu^{a;}_A = g^{ab;} \nu_{bA}(\theta_0) \quad \text{and} \quad Z^{a;}_A = g^{ab;} Z_{bA}.
\end{eqnarray*}
Under the condition [B3], let $\tilde{\Theta}$ be the one in [B3]. Set $\Omega_T = \{\omega \in \Omega ~|~ \exists! \mle(\omega) \in \tilde{\Theta}  \ s.t. \ \partial_{\theta}l_T(\mle(\omega), \omega) = 0\}$. Write $\bar{\theta}^{a_1 \dots a_k;} = T^{\frac{k}{2}}( \mle - \theta_0 )^{a_1;} \cdots ( \mle - \theta_0 )^{a_k;}$. On the set $\Omega_T$, from the Taylor expansion of $\frac{1}{T}l_a(\theta)$ at $\theta=\theta_0$, we immediately get the following two stochastic expansions
\begin{eqnarray*}
\sqrt{T}( \mle - \theta_0 )^{a;} 
&=& Z^{a; } + T^{-\frac{1}{2}}\left(Z^{a;}_{a_1}\bar{\theta}^{a_1;} + \frac{1}{2}\nu^{a;}_{a_1 a_2}\bar{\theta}^{a_1 a_2;}\right) + T^{-1}\bar{R}^{a;}_2\\
&=& Z^{a; } + T^{-\frac{1}{2}}\bar{R}^{a;}_1
\end{eqnarray*}
for any $a = 1,\dots,p$, where
\begin{eqnarray*}
\bar{R}^{a;}_2 = \frac{1}{2}Z^{a;}_{a_1 a_2}\bar{\theta}^{a_1 a_2;} +\frac{1}{2}\left\{\int^1_0(1-u)^2g^{ab;}\left( \frac{1}{T}l_{b a_1 a_2 a_3}\left(\theta_0 + u(\mle-\theta_0)\right)\right)du\right\} \bar{\theta}^{a_1 a_2 a_3;}.
\end{eqnarray*}
and
\begin{eqnarray*}
\bar{R}^{a;}_1 = Z^{a;}_{a_1}\bar{\theta}^{a_1;} + \frac{1}{2}\nu^{a;}_{a_1 a_2}\bar{\theta}^{a_1 a_2;} + T^{-\frac{1}{2}}\bar{R}^{a;}_2.
\end{eqnarray*}
From these two expressions, we get
\begin{eqnarray}
\label{asym mle}
\sqrt{T}( \mle - \theta_0 )^{a;} = Z^{a; } + T^{-\frac{1}{2}}\left(Z^{a;}_{a_1}Z^{a_1; } + \frac{1}{2}\nu^{a;}_{a_1 a_2}Z^{a_1; }Z^{a_2; }\right) + T^{-1}\check{R}^{a;}_2,
\end{eqnarray}
where
\begin{eqnarray*}
\check{R}^{a;}_2 = Z^{a;}_{a_1}\bar{R}^{a_1; }_1 + \bar{R}^{a;}_2 + T^{-\frac{1}{2}}\left(\frac{1}{2}\nu^{a;}_{a_1 a_2}\bar{R}^{a_1; }_1\bar{R}^{a_2; }_1 \right).
\end{eqnarray*}

\

We consider applying the transformation formula for the asymptotic expansion. Let
\begin{eqnarray*}
Z_T^{(1)} = T^{\frac{1}{2}}(Z_1, \dots, Z_p) \quad \text{and} \quad Z_T^{(2)} =  T^{\frac{1}{2}}(Z_{11}, \dots, Z_{1p}, Z_{21}, \dots, Z_{2p},\dots, Z_{p1}, \dots, Z_{pp}).
\end{eqnarray*}
Moreover, we put $Z_T = (Z_T^{(1)}, Z_T^{(2)})$ and $\Sigma_{T,D} = Var\left[Z_T/\sqrt{T}\right]+T^{-D}I$ for a positive constant $D$.
Let a $(p+p^2)\times(p+p^2)$-matrix $C_T$ be
\begin{eqnarray*}
C_T = \left(
    \begin{array}{ccc}
     g_T^{-1}  & O \\
      O  & G_T \\
    \end{array}
  \right),\\
\end{eqnarray*}
where
\begin{eqnarray*}
G_T = \left(
    \begin{array}{ccc}
     G_{11}&\cdots  & G_{1p} \\
     \vdots & \ddots & \vdots \\
     G_{p1}&\cdots  & G_{pp} \\
    \end{array}
  \right)
\quad \text{and} \quad
G_{ij} = \left(
    \begin{array}{cccc}
     g^{ij;} & 0 &\cdots & 0 \\
     0 & g^{ij;} &\cdots  & 0\\
     \vdots & \vdots & \ddots & \vdots \\
     0 &\cdots & 0 & g^{ij;}  \\
    \end{array}
  \right) \quad \text{: $p \times p$-matrix for $i,j=1,\dots,p$.}
\end{eqnarray*}
Then, we define
\begin{eqnarray}
\label{bZ_T}
\bar{Z}_T 
= \frac{1}{\sqrt{T}}C_TZ_T = (Z^{1;}, \dots, Z^{p;}, Z^{1;}_1, \dots, Z^{1;}_p, Z^{2;}_1, \dots, Z^{2;}_p,\dots, Z^{p;}_1, \dots, Z^{p;}_p),
\end{eqnarray}
and we write $\bar{Z}_T^{(1)} = (Z^{1;}, \dots, Z^{p;})$ and  $\bar{Z}_T^{(2)} = (Z^{1;}_1, \dots, Z^{p;}_p)$. We define the polynomial $Q_1(z)$ for a $p+p^2$-dimensional vector $z =  (z^{1;}, \dots, z^{p;}, z^{1;}_1, \dots, z^{p;}_p)$ such that $a$-th element of $Q_1(z)$ is
\begin{eqnarray}
\label{Q}
Q^{a;}_1(z) =  z^{a;}_{a_1}z^{a_1; } + \frac{1}{2}\nu^{a;}_{a_1 a_2}z^{a_1; }z^{a_2; }.
\end{eqnarray}
From (\ref{asym mle}), (\ref{bZ_T}) and (\ref{Q}), we get 
\begin{eqnarray}
\label{main form of MLE}
\sqrt{T}( \mle - \theta_0 ) = \bar{Z}_T^{(1)} + T^{-\frac{1}{2}}Q_1\left(\bar{Z}_T^{(1)}, \bar{Z}_T^{(2)}\right) + T^{-1}\check{R}_2.
\end{eqnarray}
From (\ref{main form of MLE}), we can give the asymptotic expansion for the MLE by using the transformation formula. However, it is complicated to calculate the concrete form of the density function $q_{T,3,D}$ described later. In order to simplify this calculation, the orthogonalization of $\bar{Z}_T$ is convenient. We put $\bar{\Sigma}_T^{(i, j)} = Cov\left[\bar{Z}_T^{(i)},\bar{Z}_T^{(j)}\right]$ for $i,j = 1,2$. Remark that $g_T^{-1}+T^{-D}(g_T^{-1})^2$ is non-singular for sufficiently large $T>0$. Thus, we can define $\tilde{g}_T^{-1} = (\tilde{g}^{ab;})_{a, b=1,\dots, p} = g_T^{-1}+T^{-D}(g_T^{-1})^2$ and $\tilde{g}_T = (\tilde{g}_{ab})_{a, b=1,\dots, p} = (\tilde{g}_T^{-1})^{-1}$. Let
\begin{eqnarray*}
M_{T, D}
= \left(
    \begin{array}{cc}
     I & O \\
     -\bar{\Sigma}_T^{(2, 1)}\tilde{g}_T & I \\
     \end{array}
  \right).
\end{eqnarray*}
We set $\tilde{Z}_T = M_{T, D} \bar{Z}_T = (\bar{Z}_T^{(1)}, \bar{Z}_T^{(2)} -\bar{\Sigma}_T^{(2, 1)}\tilde{g}_T\bar{Z}_T^{(1)})$, and we write $\tilde{Z}_T^{(1)} = \bar{Z}_T^{(1)}$ and $\tilde{Z}_T^{(2)} = \bar{Z}_T^{(2)} -\bar{\Sigma}_T^{(2, 1)}\tilde{g}_T\bar{Z}_T^{(1)}$. Then an elementary calculation yields
\begin{eqnarray*}
\tilde{\Sigma}_{T,D} 
=M_{T, D}C_T\Sigma_{T,D}C'_TM'_{T, D}
= \left(
    \begin{array}{cc}
     \tilde{\Sigma}_{T,D}^{(1,1)} & O \\
     O & \tilde{\Sigma}_{T,D}^{(2, 2)} \\
     \end{array}
  \right),
\end{eqnarray*}
where $\tilde{\Sigma}_{T,D}^{(1,1)} = \tilde{g}_T^{-1}$ and $\tilde{\Sigma}_{T,D}^{(2, 2)} = \bar{\Sigma}_T^{(2, 2)} + T^{-D}G_T^2 -\bar{\Sigma}_T^{(2, 1)}\tilde{g}_T\bar{\Sigma}_T^{(1, 2)}$. In terms of $\tilde{Z}_T$, $\sqrt{T}( \mle - \theta_0 )$ is rewritten as
\begin{eqnarray}
\label{ortho form of MLE}
\sqrt{T}( \mle - \theta_0 ) = \tilde{Z}_T^{(1)} + T^{-\frac{1}{2}}\tilde{Q}_1\left(\tilde{Z}_T^{(1)}, \tilde{Z}_T^{(2)}\right) + T^{-1}\check{R}_2,
\end{eqnarray}
where $\tilde{Q}_1$ is a polynomial for $\tilde{z}^{(1)} =  (\tilde{z}^{1;}, \dots, \tilde{z}^{p;})$ and $\tilde{z}^{(2)} =  (\tilde{z}^{1;}_1, \dots, \tilde{z}^{p;}_p)$
satisfying 
\begin{eqnarray*}
\tilde{Q}_1^{a;}\left(\tilde{z}^{(1)}, \tilde{z}^{(2)}\right) 
= Q_1^{a;}\left(\tilde{z}^{(1)}, \tilde{z}^{(2)} + \bar{\Sigma}_T^{(2, 1)}\tilde{g}_T\tilde{z}^{(1)}\right)
= \tilde{z}^{a;}_{a_1}\tilde{z}^{a_1;} + \tilde{\mu}^{a;}_{a_1a_2}\tilde{z}^{a_1;}\tilde{z}^{a_2;}
\end{eqnarray*}
for $\tilde{V}^{a;}_{bc} = Cov[Z^{a;}_{b}, Z^{a_1;}]\tilde{g}_{a_1c}$ and $\tilde{\mu}^{a;}_{a_1a_2} = (\tilde{V}^{a;}_{a_1a_2} + \tilde{V}^{a;}_{a_2a_1} + \nu^{a;}_{a_1a_2})/2$.

\

Focus on the main part of (\ref{ortho form of MLE}). We set  $\tilde{S}_T =  \tilde{Z}_T^{(1)} + T^{-\frac{1}{2}}\tilde{Q}_1\left(\tilde{Z}_T^{(1)}, \tilde{Z}_T^{(2)}\right)$. The following proposition holds, see Appendix for a proof.

\begin{proposition*}
\label{tildeS_T}
Let $L_1, L_2>0$. Suppose that the conditions \textnormal{[A1]-[A3]} and \textnormal{[B0]} hold. Then, there exist  $D>0$ and $\Gamma \in \naturels$ such that for any $f \in \mathscr{E}(\Gamma, L_1, L_2)$,
\begin{eqnarray*}
\left| E\left[ f\big(\tilde{S}_T\big)\right] - \int_{\reels^d}f(z^{(1)}) \tilde{q}_{T,3,D}(z^{(1)})dz^{(1)} \right| = o\left(T^{-1/2}\right),
\end{eqnarray*}
where
\begin{eqnarray*}
\tilde{q}_{T,3,D}(z^{(1)})
&=& \phi(z^{(1)}; \tilde{g}_T^{-1}) + \frac{1}{\sqrt{T}}\Bigg\{ \left(\frac{1}{6}\tilde{\kappa}^{a_1a_2a_3;}_T + \tilde{\mu}^{a_3;}_{b_1b_2}\tilde{g}^{b_1a_1;}\tilde{g}^{b_2a_2;}\right)h_{a_1a_2a_3}(z^{(1)}; \tilde{g}_T^{-1}) \\
&&+ \tilde{\mu}^{a_1;}_{b_1b_2}\tilde{g}^{b_1b_2;}h_{a_1}(z^{(1)}; \tilde{g}_T^{-1}) \Bigg\}\phi(z^{(1)}; \tilde{g}_T^{-1}).
\end{eqnarray*}
\end{proposition*}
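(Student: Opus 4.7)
The plan is to apply Theorem~\ref{Main Thm} with $p=3$ to the joint vector $\tilde{Z}_T$, Taylor-expand $f(\tilde{S}_T)$ around the linear leading term $\tilde{Z}_T^{(1)}$, and reduce each resulting expectation either to a marginal Edgeworth expansion or to a Gaussian expectation evaluated via Stein/Hermite integration by parts.

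First I would verify that the continuous-time process underlying $(Z_T^{(1)}, Z_T^{(2)})$ satisfies conditions \textnormal{[A1]} and \textnormal{[A2]}. The mixing condition \textnormal{[A1]} is assumed. The centering required by \textnormal{[A2]} follows from \textnormal{[B0]}(ii) for the score and from the definition $\nu_{ab}(\theta_0)=E[T^{-1}l_{ab}(\theta_0)]$ for the centered second derivative, while the uniform $L^p$-moment bounds come from \textnormal{[B1]}$_{q_1}$ and \textnormal{[B2]}$_{q_2,\gamma}$ with $q_1,q_2$ taken sufficiently large. Because $\tilde{Z}_T = T^{-1/2}M_{T,D}C_TZ_T$ is an affine transformation with matrices bounded uniformly in $T$ (using \textnormal{[A3]} and the definition of $\tilde{g}_T$), Theorem~\ref{Main Thm} yields, for appropriate $\Gamma, D$ and every $F\in\mathscr{E}(\Gamma,L_1,L_2)$,
\begin{eqnarray*}
\left|E[F(\tilde{Z}_T)]-\int F(\tilde{z})\,\tilde{p}_{T,3,D}(\tilde{z})\,d\tilde{z}\right|=o(T^{-1/2}),
\end{eqnarray*}
where $\tilde{p}_{T,3,D}$ is the density~\eqref{p} associated with $\tilde{Z}_T$. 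By the construction of $M_{T,D}$ the Gaussian part of $\tilde{p}_{T,3,D}$ has block-diagonal covariance $\tilde{\Sigma}_{T,D}=\mathrm{diag}(\tilde{g}_T^{-1},\tilde{\Sigma}_{T,D}^{(2,2)})$, so $\tilde{Z}_T^{(1)}$ and $\tilde{Z}_T^{(2)}$ are independent to leading order and both have mean zero.

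Next I would Taylor-expand
\begin{eqnarray*}
f(\tilde{S}_T) = f(\tilde{Z}_T^{(1)}) + T^{-1/2}\partial_a f(\tilde{Z}_T^{(1)})\tilde{Q}_1^{a;}(\tilde{Z}_T) + R_T,
\end{eqnarray*}
with $|R_T|\le C T^{-1}(1+|\tilde{Z}_T|)^{L_1}|\tilde{Q}_1(\tilde{Z}_T)|^2$, so that the moment bounds above force $E[|R_T|]=O(T^{-1})=o(T^{-1/2})$. The leading expectation $E[f(\tilde{Z}_T^{(1)})]$ is handled by choosing $F(\tilde{z})=f(\tilde{z}^{(1)})$ in the display above; by block-diagonality the marginal of $\tilde{p}_{T,3,D}$ on $z^{(1)}$ is itself an Edgeworth expansion of $\tilde{Z}_T^{(1)}$, whose only $T^{-1/2}$ contribution is $\tfrac{1}{6}\tilde{\kappa}_T^{a_1a_2a_3;}h_{a_1a_2a_3}(z^{(1)};\tilde{g}_T^{-1})\phi(z^{(1)};\tilde{g}_T^{-1})$. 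For the correction term $E[\partial_a f(\tilde{Z}_T^{(1)})\tilde{Q}_1^{a;}(\tilde{Z}_T)]$ the $T^{-1/2}$ prefactor permits replacing $\tilde{Z}_T$ by its block-diagonal Gaussian law at cost $o(1)$; splitting $\tilde{Q}_1^{a;}=\tilde{z}_{a_1}^{a;}\tilde{z}^{a_1;}+\tilde{\mu}^{a;}_{a_1a_2}\tilde{z}^{a_1;}\tilde{z}^{a_2;}$, the bilinear part vanishes by the independence and centering of $\tilde{Z}_T^{(2)}$, while Stein's identity applied twice to the purely-$z^{(1)}$ part yields
\begin{eqnarray*}
\tilde{\mu}^{a;}_{a_1a_2}\left(\tilde{g}^{a_1a_2;}\int f\,h_a\,\phi + \tilde{g}^{a_1b;}\tilde{g}^{a_2c;}\int f\,h_{abc}\,\phi\right),
\end{eqnarray*}
which after relabeling assembles exactly into the $\tilde{\mu}^{a_1;}_{b_1b_2}\tilde{g}^{b_1b_2;}h_{a_1}$ and $\tilde{\mu}^{a_3;}_{b_1b_2}\tilde{g}^{b_1a_1;}\tilde{g}^{b_2a_2;}h_{a_1a_2a_3}$ contributions of $\tilde{q}_{T,3,D}$.

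The main obstacle I expect is the combinatorial bookkeeping identifying the Hermite combinations produced by Stein's identity with the indexed sums in the statement, together with the uniform control of $R_T$ over $\mathscr{E}(\Gamma,L_1,L_2)$; the latter requires polynomial moments of $\tilde{Q}_1(\tilde{Z}_T)$ of arbitrary order, which is why $q_1,q_2$ in \textnormal{[B1]}--\textnormal{[B2]} must be chosen large enough. A secondary technicality is confirming that the $T^{-D}$ perturbation hidden in $\tilde{g}_T^{-1}=g_T^{-1}+T^{-D}(g_T^{-1})^2$ and $\tilde{\Sigma}_{T,D}^{(2,2)}$ only contributes terms of order $T^{-D}=o(T^{-1/2})$, ensured by taking $D>1/2$ from the start.
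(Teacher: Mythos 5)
Your proposal is correct and shares the paper's overall strategy — both start from the Edgeworth expansion of the orthogonalized vector $\tilde{Z}_T$ (the paper's Lemma \ref{tildeZ exp}, your application of Theorem \ref{Main Thm} with $p=3$) and then push it through the quadratic map $\tilde{S}_T = \tilde{Z}_T^{(1)} + T^{-1/2}\tilde{Q}_1(\tilde{Z}_T)$ — but you execute the transformation step differently. The paper invokes the Bhattacharya--Ghosh transformation theorem (Theorem 5.1 of \cite{sakyos}) as a black box, obtaining $\tilde{q}_{T,3,D}$ in the form ``marginal of $\tilde{p}_{T,3,D}$ minus $T^{-1/2}\sum_a \partial_{z^{a;}}\int \tilde{Q}_1^{a;}\phi\,dz^{(2)}$,'' and then identifies the explicit Hermite coefficients by decomposing $\tilde{Q}_1^{a;}$ against the orthogonal Hermite system in $z^{(2)}$. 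You instead re-derive that formula from scratch: a second-order Taylor expansion of $f$ around $\tilde{Z}_T^{(1)}$, an $o(1)$ Gaussian replacement in the $T^{-1/2}$ correction term (which needs one extra application of Theorem \ref{Main Thm} at order $p=2$ to the product $\partial_a f(\tilde{z}^{(1)})\tilde{Q}_1^{a;}(\tilde{z})$, using that $\tilde{Q}_1$ is a polynomial with coefficients controlled by [A3] and Corollary \ref{cumest_3}), and then Stein's identity; your integration by parts produces exactly the $-\partial_a(\,\cdot\,\phi)$ terms that the transformation theorem packages, and the cross term $\tilde{z}^{a;}_{a_1}\tilde{z}^{a_1;}$ dies by the block-diagonality that the orthogonalization was designed to produce, just as it does via Hermite orthogonality in the paper. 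Your route is more self-contained and makes visible why the orthogonalization simplifies the density; the paper's route is shorter but leans on the external reference. One small correction: condition [A2] (and [A1]) for the underlying process is part of the hypotheses of the proposition, not something to be derived from [B1]$_{q_1}$--[B2]$_{q_2,\gamma}$ — those conditions control the normalized terminal statistics rather than the increments over unit intervals that [A2] concerns, so your proposed derivation of [A2] from them would not go through; fortunately it is unnecessary.
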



\

We need to evaluate the remainder term $\check{R}_2$ in (\ref{ortho form of MLE}). The conditions [B0]-[B4] lead to the following proposition.  We will give a proof in Appendix.

\begin{proposition*}
\label{evaluateR_2}
Let $L>1$, $\gamma \in (0,1)$ and $q_1, q_2, q_3>0$ with
\begin{eqnarray}
\label{restriction}
q_1 > 3L,\quad
q_2 > \max\left(p, \frac{3q_1L}{q_1-3L}\right),\quad
q_3 > \frac{q_1L}{q_1-3L},\quad
\frac{2}{3} + \max\left(\frac{L}{q_2}, \frac{L}{3q_3}\right) < \gamma < 1 - \frac{L}{q_1}.
\end{eqnarray}
For these constants, we assume that the conditions \textnormal{[B0]-[B4]} hold. Then, there exist $C>0$ and $\ep' \in (1,0)$ such that 
\begin{eqnarray*}
P\left[\Omega_T \cap \left\{ T^{-1}|\check{R}_2^{a;}| \le CT^{-\frac{1 + \ep'}{2}}, \ a = 1, \dots, p \right\} \right] = 1 - o\left(T^{-\frac{L}{2}}\right).
\end{eqnarray*}
\end{proposition*}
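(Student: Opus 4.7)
My plan is the standard Newton-rate argument for M-estimators: construct a high-probability ``good'' event on which every random quantity appearing in $\check R_2^{a;}$ admits an explicit polynomial-in-$T$ bound, substitute, and verify that (\ref{restriction}) forces $T^{-1}\check R_2^{a;}$ to be $O(T^{-(1+\ep')/2})$ for some $\ep' \in (0,1)$.

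\textbf{Step 1 (good event).} Set
\begin{align*}
\Omega_T^{*} &= \Big\{\max_{a}|Z_a|\le T^{L/(2q_1)+\delta}\Big\}
\cap \Big\{\max_{k\in\{2,3\}}\max_{a_1,\ldots,a_k}|Z_{a_1\cdots a_k}|\le T^{(1-\gamma)/2 + L/(2q_2)+\delta}\Big\}\\
&\qquad \cap \Big\{\max_{a_1,\ldots,a_4}\sup_{\theta \in \Theta}|T^{-1}l_{a_1\cdots a_4}(\theta)|\le T^{L/(2q_3)+\delta}\Big\}
\end{align*}
for a small $\delta>0$. Markov's inequality applied in turn with \textnormal{[B1]}$_{q_1}$, \textnormal{[B2]}$_{q_2,\gamma}$ and \textnormal{[B4]}$_{q_3}$ gives $P\bigl((\Omega_T^{*})^{c}\bigr) = o(T^{-L/2})$; the condition $q_2>p$ is used to upgrade the pointwise-in-$\theta$ $L^{q_2}$ bound of \textnormal{[B2]} to a $\sup_\theta$ bound via a Sobolev-type embedding when needed.

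\textbf{Step 2 (existence and rate of the MLE).} On $\Omega_T^{*}$ the score satisfies $|T^{-1/2}\partial_\theta l_T(\theta_0)|\le CT^{L/(2q_1)+\delta}$, and \textnormal{[B2]} combined with \textnormal{[B3]} keeps the integrated Hessian $\int_0^1 T^{-1}\partial_\theta^2 l_T\bigl(\theta_0+u(\theta-\theta_0)\bigr)du$ uniformly non-degenerate over short segments based at $\theta_0$ inside $\tilde\Theta$. A fixed-point / inverse-function argument then produces the unique zero $\hat\theta_T\in\tilde\Theta$ (so $\Omega_T^{*}\subset\Omega_T$ modulo a set of measure $o(T^{-L/2})$), and it satisfies
\[
|\hat\theta_T-\theta_0|\le C\,T^{-1/2+L/(2q_1)+\delta}.
\]
The upper bound $\gamma<1-L/q_1$ in (\ref{restriction}) implies $|\hat\theta_T-\theta_0|=o(T^{-\gamma/2})$, which keeps the Hessian along the segment close to $\nu_{ab}$ and closes the argument. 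As a byproduct $|\bar\theta^{a_1\cdots a_k;}|\le C\,T^{kL/(2q_1)+k\delta}$.

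\textbf{Step 3 (substitution).} Inserting the bounds from Steps 1--2 into the explicit formula for $\bar R_2^{a;}$ gives, up to a factor of $T^{O(\delta)}$,
\[
\bar R_2^{a;} = O\!\bigl(T^{(1-\gamma)/2+L/(2q_2)+L/q_1}\bigr) + O\!\bigl(T^{L/(2q_3)+3L/(2q_1)}\bigr),
\]
and analogous term-by-term estimates apply to $\bar R_1^{a;}$. Expanding $\check R_2^{a;}=Z^{a;}_{a_1}\bar R_1^{a_1;}+\bar R_2^{a;}+T^{-1/2}\tfrac12\nu^{a;}_{a_1 a_2}\bar R_1^{a_1;}\bar R_1^{a_2;}$ and dividing by $T$, I would read off the order of each summand.

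\textbf{Main obstacle.} The real content is the exponent bookkeeping: (\ref{restriction}) is calibrated so that $\gamma>2/3+L/q_2$ together with $q_1>3L$ yields $-1+(1-\gamma)/2+L/(2q_2)+L/q_1<-1/2$ (controlling the Hessian-deviation summands); the hypothesis $q_3>q_1L/(q_1-3L)$ is algebraically equivalent to $L/(2q_3)+3L/(2q_1)<1/2$ (controlling the fourth-derivative summand); and $q_1>3L$ also handles the quadratic-in-$\bar R_1$ piece via $-3/2+2L/q_1<-1/2$. Once each of these strict inequalities is verified a common $\ep'\in(0,1)$ can be chosen (absorbing $\delta$) and the proposition follows.
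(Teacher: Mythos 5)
Your proposal is correct and follows essentially the same route as the paper's proof: the paper likewise builds a good event of probability $1-o(T^{-L/2})$ from [B1], [B2] and [B4] via Markov's inequality (parametrizing the thresholds through auxiliary exponents $\gamma'\in(2/3,\gamma-L/q_2)$ and $\gamma''\in(L/q_3,3\gamma-2)$ rather than your $L/q_i+\delta$), obtains the rate $|\hat{\theta}_T-\theta_0|<T^{-\gamma/2}$ on that event by invoking the standard M-estimator existence argument, and then carries out the same term-by-term exponent bookkeeping for $\bar{R}_2$, $\bar{R}_1$ and $\check{R}_2$. The three inequalities you isolate as the ``main obstacle'' are exactly the roles the paper assigns to the constraints in (\ref{restriction}), so nothing essential is missing.
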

Finally, we assume the following condition.
\begin{description}
\im[[C1\!\!]] \quad $\sup_{T>0}\big\| \sqrt{T}( \mle - \theta_0 ) \big\|_{L^k(P)} < \infty$ for any $k > 0$.
\end{description}

From Proposition \ref{tildeS_T} and Proposition \ref{evaluateR_2}, we get the asymptotic expansion for the distribution of the MLE. Remark that, since $g_T^{-1}$ is non-singular for large $T>0$ under the condition [A3], we can get rid of a constant $D$. Let $V^{a;}_{bc} = Cov[Z^{a;}_{b}, Z^{a_1;}]g_{a_1c}$ and $\mu^{a;}_{a_1a_2} = \big( V^{a;}_{a_1a_2} + V^{a;}_{a_2a_1} + \nu^{a;}_{a_1a_2} \big)/2$. The following theorem is the conclusion of this section.

\begin{theorem*}
\label{Main Thm2 New}
Let $L_1, L_2>0$. Suppose that the conditions \textnormal{[A1]-[A3]} and \textnormal{[C1]} are satisfied. Moreover, assume that there exist $L > 1$, $\gamma \in (0,1)$ and $q_1, q_2, q_3 > 0$ such that \textnormal{[B0]-[B4]} with (\ref{restriction}) hold. Then, there exists  $\Gamma \in \naturels$ such that for any $f \in \mathscr{E}(\Gamma, L_1, L_2)$,
\begin{eqnarray*}
\left| E\left[ f\big(\sqrt{T}( \mle - \theta_0 )\big)\right] - \int_{\reels^d}f(z^{(1)}) q_{T,3}(z^{(1)})dz^{(1)} \right| = o\left(T^{-\frac{1}{2}}\right),
\end{eqnarray*}
where 
\begin{eqnarray*}
q_{T,3}(z^{(1)})
&=& \phi(z^{(1)}; g_T^{-1}) + \frac{1}{\sqrt{T}}\Bigg\{ \left(\frac{1}{6}\tilde{\kappa}^{a_1a_2a_3;}_T + \mu^{a_3;}_{b_1b_2}g^{b_1a_1;}g^{b_2a_2;}\right)h_{a_1a_2a_3}(z^{(1)}; g_T^{-1}) \\
&&+ \mu^{a_1;}_{b_1b_2}g^{b_1b_2;}h_{a_1}(z^{(1)}; g_T^{-1}) \Bigg\}\phi(z^{(1)}; g_T^{-1}).
\end{eqnarray*}
\end{theorem*}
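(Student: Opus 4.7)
The plan is to lift the Edgeworth expansion of Proposition~\ref{tildeS_T} from the polynomial surrogate $\tilde{S}_T$ to the MLE itself using the stochastic decomposition (\ref{ortho form of MLE}) and the remainder control of Proposition~\ref{evaluateR_2}, and finally to pass from $\tilde{q}_{T,3,D}$ to the stated density $q_{T,3}$ by letting the regularization $T^{-D}$ be absorbed into the error. The whole argument splits naturally into a localization step, a Taylor-type replacement of $\sqrt{T}(\mle-\theta_0)$ by $\tilde{S}_T$, and a density comparison.

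First I would localize to the favorable event $B_T = \Omega_T \cap \{T^{-1}|\check{R}^{a;}_2| \le C T^{-(1+\epsilon')/2},\ a=1,\dots,p\}$ supplied by Proposition~\ref{evaluateR_2}, on which $P[B_T^c] = o(T^{-L/2})$. Since $L>1$, combining H\"older's inequality with the polynomial growth $|f(x)| \le L_2(1+|x|)^{L_1}$, the uniform moment bound [C1] and the moment control of $\tilde{S}_T$ inherited from [B1]$_{q_1}$--[B2]$_{q_2,\gamma}$ (since $\tilde{S}_T$ is a quadratic polynomial in $\bar Z_T$), both $E[f(\sqrt{T}(\mle-\theta_0))\mathbb{1}_{B_T^c}]$ and $E[f(\tilde{S}_T)\mathbb{1}_{B_T^c}]$ are $o(T^{-1/2})$. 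On $B_T$, the expansion (\ref{ortho form of MLE}) reads $\sqrt{T}(\mle-\theta_0) = \tilde{S}_T + T^{-1}\check{R}_2$ with $|T^{-1}\check{R}_2| \le C T^{-(1+\epsilon')/2}$ deterministically, so a first-order Taylor expansion of $f$ around $\tilde{S}_T$ together with $|\nabla f(x)| \le L_2(1+|x|)^{L_1}$ and [C1] yields
\[
\big|E[f(\sqrt{T}(\mle-\theta_0))\mathbb{1}_{B_T}] - E[f(\tilde{S}_T)\mathbb{1}_{B_T}]\big| = o(T^{-1/2}).
\]
Thus $E[f(\sqrt{T}(\mle-\theta_0))] = E[f(\tilde{S}_T)] + o(T^{-1/2})$, and Proposition~\ref{tildeS_T} reduces the problem to showing $\int f(z)[\tilde{q}_{T,3,D}(z) - q_{T,3}(z)]\,dz = o(T^{-1/2})$.

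For the density comparison, under [A3] the matrix $g_T^{-1}$ is uniformly bounded for large $T$, so by construction $\tilde{g}_T^{-1} - g_T^{-1} = T^{-D}(g_T^{-1})^2 = O(T^{-D})$, and this propagates to $\tilde{g}_T - g_T$, $\tilde{V}^{a;}_{bc} - V^{a;}_{bc}$ and $\tilde{\mu}^{a;}_{bc} - \mu^{a;}_{bc}$, all of order $T^{-D}$, while the third cumulant $\tilde{\kappa}_T$ is unaffected by the regularization. Standard differencing of Gaussian densities and Hermite polynomials gives a pointwise bound
\[
|\tilde{q}_{T,3,D}(z) - q_{T,3}(z)| \le C T^{-D}(1+|z|)^K \phi(z; g_T^{-1})
\]
for some $K$ depending on $L_1$, so integrating against $f$ yields $O(T^{-D})$. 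Choosing $D > 1/2$ compatibly with the $D$ supplied by Proposition~\ref{tildeS_T} makes this $o(T^{-1/2})$ and concludes the proof.

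The main obstacle I anticipate is the last comparison: although the pointwise differences of individual coefficients are cleanly of order $T^{-D}$, the densities are sums of products of several $\tilde{g}^{\cdot;\cdot}$, Hermite polynomials and the coefficients $\tilde{\mu}$, $\tilde{\kappa}_T$, so one must verify that the expanded difference admits a uniform Gaussian-weighted polynomial bound and that the value of $D$ dictated by Proposition~\ref{tildeS_T} can simultaneously be taken larger than $1/2$; the bookkeeping is delicate because $\tilde{\mu}$ is a symmetrized combination of a covariance of $Z$'s with $\nu$, not a mere algebraic function of $\tilde{g}_T$.
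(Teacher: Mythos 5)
Your proposal is correct and follows essentially the same route as the paper's proof: localization to the event supplied by Proposition \ref{evaluateR_2} (with the bad-event contributions killed by H\"older, [C1] and the $L^k$-bounds on $\tilde{S}_T$ since $L>1$), a first-order Taylor replacement of $f(\tilde{S}_T + T^{-1}\check{R}_2)$ by $f(\tilde{S}_T)$, an appeal to Proposition \ref{tildeS_T}, and a final comparison of $\tilde{q}_{T,3,D}$ with $q_{T,3}$ via the $O(T^{-D})$ perturbations of $\tilde{g}_T^{-1}$, $\tilde{\mu}$ and the Gaussian factor, taking $D$ large. The "delicate bookkeeping" you flag at the end is exactly what the paper resolves by factoring out the Gaussian densities and bounding the ratio $\phi(z;g_T^{-1})/\phi(z;\tilde{g}_T^{-1})$ together with polynomially growing coefficient differences.
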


\section{Hawkes process with an exponential kernel}

In this section, we will define the Hawkes process with an exponential kernel whose intensity process starts from a point $x+\mu$. Furthermore, we will see its properties. In the second half, we define the Hawkes core process and investigate its properties. Let $(\Omega, \mathscr{F}, P)$ be a probability space and $\{\mathscr{F}_t\}_{t \in \reels_+}$ be a filtration that satisfies the usual conditions.

\subsection{Definition}
\label{hawkes def}
First, we define a point process and its intensity. A sequence of stopping times $\{\tau_n\}_{n \in \mathbb{N}}$ with respect to $\{\mathscr{F}_t\}_{t \in \reels_+}$ is called {\bf a point process}, if it satisfies the following properties:
\begin{description}
\im [(i)] $\tau_1 > 0$ a.s.
\im [(ii)] $\tau_n < \tau_{n+1}$ on $\{\tau_n < \infty\}$ a.s.\footnote{Such notation means that $P[\{\tau_n < \tau_{n+1}\}\cap\{\tau_n < \infty\}] = P[\tau_n < \infty]$.}  
\im [(iii)] $\tau_n = \tau_{n+1}$ on $\{\tau_n = \infty\}$ a.s.
\end{description}
Let $\tau_{\infty} = \lim_{n \to \infty}\tau_n$. Define a stochastic process $N_t$ by $N_t = \sum_{n \ge 1} 1_{\{\tau_n \le t\}} 1_{\{\tau_{\infty} > t\}}$. Then, $(N_t, \mathscr{F}_t)$ is also called {\bf a point process}. Define {\bf the intensity process of $(N_t, \mathscr{F}_t)$} as a nonnegative $\mathscr{F}_t$-progressively measurable process $\lambda_t$ such that $\int_0^t\lambda_sds$ is the compensator of $(N_t, \mathscr{F}_t)$. The filtration $\mathscr{F}_t$ is called the history of $N_t$, in the sense that $\sigma(N_s; s\le t) \subset \mathscr{F}_t$.

\begin{definition*}[Hawkes process with an exponential kernel]
\label{HawkesProcess}
{\bf A Hawkes process with an exponential kernel (whose intensity starts from $x+\mu$)} is a point process $(N^x_t, \mathscr{F}_t)$ with the $\mathscr{F}_t$-predictable intensity
\begin{eqnarray}
\lambda^x_t = \mu + xe^{-\beta t} + \int_{(0,t)}\alpha e^{-\beta(t-s)}dN^x_s,
\end{eqnarray}
where $\mu, \alpha$ and $\beta$ are positive constants with $\alpha/\beta<1$ and $x \ge 0$.
\end{definition*}

The following remarks are fundamental. Let $\mathscr{F}^x_t = \sigma(N^x_s; s\le t)$.

\begin{remark*}
{\rm There exists a Hawkes process with an exponential kernel $N^x_t$ with the history $\{\mathscr{F}^x_t\}_{t \in \reels_+}$. One may prove this existence in the same fashion as the proof of Theorem 1(a) in \cite{Bremaud}. Moreover, in this construction, the measurability of $x \mapsto N^x_t$ and $x \mapsto \lambda^x_t$ are brought about spontaneously.}
\end{remark*}

\begin{remark*}
\label{finite path}
{\rm Denote the $n$-th jump time of $N^x$ by $\tau^x_n$ and let $\tau^x_{\infty} = \lim_{n \iku \infty} \tau^x_n$. Then $\tau^x_{\infty} = \infty \ a.s$. In particular, $N^x_t$ and $\lambda^x_t$ have finite paths.}
\end{remark*}

\begin{proof}[{\bf Proof of Remark \ref{finite path}}]
Fix $t \ge 0$ arbitrarily. From the definition of $N_t^x$, we have
\begin{eqnarray*}
E\left[N^x_{t\wedge\tau^x_n} \right] 
= E\left[\int_{(0,t\wedge\tau^x_n]}\lambda^x_sds \right] 
&=& \mu E[t\wedge\tau^x_n] + \frac{x}{\beta}E[1-e^{-\beta(t\wedge\tau^x_n)}] + \frac{\alpha}{\beta} E\left[\int_{(0,t\wedge\tau^x_n]}(1-e^{-\beta(t\wedge\tau^x_n-u)})dN^x_u\right]\\
&\le&\mu t +  \frac{x}{\beta} + \frac{\alpha}{\beta}E\left[N^x_{t\wedge\tau^x_n} \right].
\end{eqnarray*}
 Thus, 
\begin{eqnarray*}
\label{3A}
E\left[N^x_{t\wedge\tau^x_n} \right] \le \frac{\beta\mu t + x}{\beta-\alpha}.
\end{eqnarray*}
However, if $P[\tau^x_{\infty} \le t] > 0$ holds, the above inequality and the monotone convergence theorem imply that $N^x_{\tau^x_{\infty}-} < \infty$ on $\{\tau^x_{\infty} \le t\}$ a.s. and it contradict the definition of $\tau^x_{\infty}$. Therefore, $\tau^x_{\infty} > t$ a.s. Since $t$ is arbitrarily, $\tau^x_{\infty} = \infty$ holds almost surely.
  \end{proof}


\subsection{Markovian property of exponential Hawkes intensity}

The main purpose of this subsection is revealing the Markovian property of the Hawkes process with an exponential kernel. This property is well-known, see \cite{Oakes1975}. However, in \cite{Oakes1975}, the way of definition of the Hawkes process is somewhat different from ours. Therefore, to strictly handle the Markovian property under our settings and to make this paper self-contained, we will give another proof via the extended generator of the intensity process $\lambda^x_t$ that is defined later. In this subsection, we deal with the outline only. The detail of proofs can be found in Appendix.\\

Before getting into the main topic, we define some symbols used throughout this and the next Subsection. Let $N^x_t$ be a Hawkes process as defined in Definition \ref{HawkesProcess}, and set $\{\mathscr{F}^x_t\}_{t \in \reels_+}$ as the history of $N^x$. By considering a sufficiently rich $\Omega$, $\{\mathscr{F}^x_t\}_{t \in \reels_+}$ is regarded as a right-continuous filtration, see Lemma 18.4 in \cite{LS}. Moreover, we regard that $\{\mathscr{F}^x_t\}_{t \in \reels_+}$ is augmented and use the same notation $\{\mathscr{F}^x_t\}_{t \in \reels_+}$. Then,  $\{\mathscr{F}^x_t\}_{t \in \reels_+}$ satisfies the usual condition. \\

First, we see that $\lambda^x_t$ has the finiteness of its moment. To show this property, we prepare the following lemma that is shown in the proof of Proposition4.5 in \cite{ClinetYoshida}. 

\begin{lemma*}
\label{generator}
Let $\alpha, \beta$ and $\mu$ be parameters of the Hawkes process $N^x_t$. For a differentiable function $f$, we define the operator $\mathscr{A}$ by
\begin{eqnarray}
\mathscr{A}f(y) = y\left(f(y+\alpha) - f(y)\right) - \beta(y-\mu)\frac{d}{dy}f(y).
\label{exgenerator}
\end{eqnarray}
Then, there exist positive constants $M_1$, $K_1$ and $K_2$ such that for $V(y) = e^{M_1y}$
\begin{eqnarray*}
\mathscr{A}V(y) \le -K_1V(y) + K_2.
\end{eqnarray*}
\end{lemma*}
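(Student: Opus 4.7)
The plan is a direct Foster–Lyapunov style computation. I would first evaluate $\mathscr{A}V$ on $V(y) = e^{M_1 y}$ explicitly. Since $V(y+\alpha) = e^{M_1\alpha}V(y)$ and $V'(y) = M_1 V(y)$, substitution into \eqref{exgenerator} yields
\begin{eqnarray*}
\mathscr{A}V(y) = V(y)\bigl[\,y\bigl(e^{M_1\alpha} - 1 - \beta M_1\bigr) + \beta M_1 \mu\,\bigr].
\end{eqnarray*}
So the whole lemma reduces to controlling the coefficient $e^{M_1\alpha} - 1 - \beta M_1$ and then handling the additive $\beta M_1\mu$ term.

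Next I would use the subcriticality hypothesis $\alpha/\beta < 1$ to choose $M_1$. Since $e^{M_1\alpha} - 1 - \beta M_1 = (\alpha - \beta)M_1 + O(M_1^2)$ as $M_1 \downarrow 0$, and $\alpha - \beta < 0$, one can pick $M_1 > 0$ small enough so that
\begin{eqnarray*}
c := \beta M_1 + 1 - e^{M_1\alpha} > 0.
\end{eqnarray*}
With this choice, $\mathscr{A}V(y) = V(y)\bigl[-c\,y + \beta M_1 \mu\bigr]$, and $y$ ranges only over the nonnegative reals since it represents the intensity.

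Finally, I would split according to whether $y$ is large or small. Set $R = 2\beta M_1\mu/c$. For $y \ge R$ the bracket satisfies $-cy + \beta M_1\mu \le -cy/2 \le -cR/2$, so $\mathscr{A}V(y) \le -\tfrac{cR}{2}V(y)$. For $0 \le y < R$ the function $V(y)$ is bounded by $e^{M_1 R}$, hence $\mathscr{A}V(y)$ is bounded by a constant depending only on $M_1,\alpha,\beta,\mu$. Taking $K_1 = cR/2$ and $K_2 = (\beta M_1\mu + K_1)e^{M_1 R}$ then gives $\mathscr{A}V(y) \le -K_1 V(y) + K_2$ on the whole nonnegative axis, which completes the proof.

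There is really no substantive obstacle here: the lemma is the standard geometric drift condition for an $M/M/\infty$-like birth–decay dynamics, and the computation is routine once one spots that $V(y+\alpha)/V(y)$ is a constant for the exponential $V$. The only delicate point is simply verifying that the exponent $M_1$ can be chosen small enough, which is where the assumption $\alpha < \beta$ is used; this is the same mechanism underlying the exponential ergodicity that will later be invoked for the Hawkes intensity.
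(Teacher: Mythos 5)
Your computation is correct and is the standard Foster--Lyapunov drift argument: the identity $\mathscr{A}V(y)=V(y)\left[y\left(e^{M_1\alpha}-1-\beta M_1\right)+\beta M_1\mu\right]$ checks out, the subcriticality $\alpha<\beta$ does exactly the work you assign it in making the linear coefficient negative for small $M_1$, and the large-$y$/small-$y$ split delivers admissible $K_1,K_2$. The paper itself does not reprove this lemma but defers to the proof of Proposition 4.5 in Clinet--Yoshida, which rests on the same exponential test-function calculation, so your argument is essentially the intended one and has the merit of being self-contained.
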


\noindent Lemma \ref{generator} ensures the existence of the moment-generating function of $\lambda^x_t$ on a neighborhood of the origin:

\begin{proposition*}
\label{strong moment}
There exists a positive constant $M_1$ such that 
\begin{eqnarray*}
\sup_{t \in \reels_+}E\left[e^{M_1\lambda^x_t}\right] < \infty.
\end{eqnarray*}
\end{proposition*}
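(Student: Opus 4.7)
The strategy is to apply Dynkin's formula (Itô's formula for jump processes) to $V(\lambda^x_t) = e^{M_1 \lambda^x_t}$, recognizing $\mathscr{A}$ from Lemma \ref{generator} as the extended generator of the Markov process $\lambda^x_t$, and then to promote the Lyapunov-type inequality $\mathscr{A}V \le -K_1 V + K_2$ into a uniform-in-$t$ moment bound via an integrating-factor argument.

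First I would note that between jumps $\lambda^x_t$ satisfies the ODE $d\lambda^x_t = -\beta(\lambda^x_t - \mu) dt$, and at each jump time of $N^x$ it increases by $\alpha$, so $d\lambda^x_t = -\beta(\lambda^x_t - \mu) dt + \alpha dN^x_t$. Itô's formula for a $C^1$ function $f$ therefore gives
\begin{eqnarray*}
f(\lambda^x_t) = f(\lambda^x_0) + \int_0^t \mathscr{A}f(\lambda^x_s) ds + \int_0^t \left(f(\lambda^x_{s-} + \alpha) - f(\lambda^x_{s-})\right) dM^x_s,
\end{eqnarray*}
where $M^x_t = N^x_t - \int_0^t \lambda^x_s ds$ is the compensated counting martingale. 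This identifies $\mathscr{A}$ in (\ref{exgenerator}) as the extended generator of $\lambda^x_t$.

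Next I would apply the same formula to the space-time function $(s,y) \mapsto e^{K_1 s} V(y)$. A direct computation yields
\begin{eqnarray*}
e^{K_1 t} V(\lambda^x_t) - V(\lambda^x_0) = \int_0^t e^{K_1 s}\left(K_1 V(\lambda^x_s) + \mathscr{A}V(\lambda^x_s)\right) ds + \int_0^t e^{K_1 s}\left(V(\lambda^x_{s-} + \alpha) - V(\lambda^x_{s-})\right) dM^x_s,
\end{eqnarray*}
and Lemma \ref{generator} bounds the drift integrand by $K_2 e^{K_1 s}$ since $K_1 V + \mathscr{A}V \le K_2$. To take expectations rigorously I introduce the localizing sequence $\tau_n = \inf\{t \ge 0 : \lambda^x_t \ge n\} \wedge n$, which is a stopping time with $\tau_n \uparrow \infty$ a.s.\ by Remark \ref{finite path}. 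On $[0, t \wedge \tau_n]$ all quantities appearing in the stochastic integral are bounded, so the stopped integrand against $dM^x$ is a true martingale of mean zero, and stopping at $t \wedge \tau_n$ gives
\begin{eqnarray*}
E\left[e^{K_1 (t \wedge \tau_n)} V(\lambda^x_{t \wedge \tau_n})\right] \le V(\lambda^x_0) + \frac{K_2}{K_1}\left(e^{K_1 t} - 1\right).
\end{eqnarray*}

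Letting $n \to \infty$, monotone convergence applied to the lower bound $e^{K_1 t} V(\lambda^x_t) \mathbf{1}_{\{t \le \tau_n\}}$ of the left-hand side yields $E[V(\lambda^x_t)] \le V(\lambda^x_0) e^{-K_1 t} + K_2/K_1 \le e^{M_1(x+\mu)} + K_2/K_1$, which is bounded independently of $t \in \reels_+$ and proves the claim with the same $M_1$ furnished by Lemma \ref{generator}. The main technical point is the martingale property of the stopped stochastic integral together with the passage to the limit in $n$; both are resolved by the boundedness of $\lambda^x$ on $[0, t \wedge \tau_n]$ and by Remark \ref{finite path}, which guarantees that the localizing sequence exhausts $\reels_+$ almost surely.
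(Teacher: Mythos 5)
Your proposal is correct and follows essentially the same route as the paper: the same Lyapunov function $V(y)=e^{M_1 y}$ from Lemma \ref{generator}, the same integrating factor $e^{K_1 t}$, localization by stopping times that exhaust $\reels_+$ (the paper stops at the jump times $\tau^x_i$, you at hitting times of level $n$, which is an immaterial difference), and the same Fatou/monotone-convergence passage to the limit. No gaps.
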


Second, we see the Markov property of the exponential Hawkes intensity. To show this property, we use the idea of the extended generator that is an extension of the infinitesimal generator. It can be found, for instance, by  \cite{MeynTweedie3}. 

\begin{definition*}
Let $(X_t, \mathscr{F}_t)$ be a $d$-dimensional adapted process. We denote by $Dom(\mathscr{A})$ the set of all measurable functions $f : \reels^d \iku \reels$ for which there exists a measurable function $U : \reels^d \iku \reels$ such that $f(X_t) - f(X_0) - \int_{(0, t]}U(X_s)ds$ is a $\mathscr{F}_t$-martingale. Then, we write $U = \mathscr{A}f$ and call $\mathscr{A}$ as {\bf the extended generator of $(X_t, \mathscr{F}_t)$}.
\end{definition*}

For a differentiable function $f : \reels \to \reels$, we define the operator $\mathscr{A}$ by (\ref{exgenerator}) as one of the extended generator of $(\lambda^x_t, \mathscr{F}^x_t)$. We investigate the domain of $\mathscr{A}$. Let $\mathscr{P} = \{\text{polynomial functions on $\reels$}\}$. Then, the next lemma follows.

\begin{lemma*}
\label{P in DomA}
$\mathscr{P} \subset Dom(\mathscr{A})$.
\end{lemma*}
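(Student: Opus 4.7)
The plan is to prove that for every $f \in \mathscr{P}$ the process
$$M^f_t := f(\lambda^x_t) - f(\lambda^x_0) - \int_{(0,t]} \mathscr{A}f(\lambda^x_s)\, ds$$
is an $\mathscr{F}^x_t$-martingale, with $\mathscr{A}f$ as in (\ref{exgenerator}). First I would read off from Definition \ref{HawkesProcess} that $\lambda^x_t$ is a semimartingale satisfying $d\lambda^x_t = -\beta(\lambda^x_t - \mu)\, dt + \alpha\, dN^x_t$. Since $f$ is smooth, the change-of-variables formula for jump semimartingales applied to $\lambda^x$ gives
$$f(\lambda^x_t) - f(\lambda^x_0) = \int_{(0,t]} f'(\lambda^x_s)(-\beta)(\lambda^x_s - \mu)\, ds + \int_{(0,t]} \bigl[f(\lambda^x_{s-}+\alpha) - f(\lambda^x_{s-})\bigr]\, dN^x_s.$$

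Next I would compensate the jump integral against $\int_{(0,t]} \lambda^x_s\, ds$, the $\mathscr{F}^x_t$-compensator of $N^x_t$ (which exists and is finite by Remark \ref{finite path}). This isolates the local martingale $\tilde{M}^f_t := \int_{(0,t]} \bigl[f(\lambda^x_{s-}+\alpha) - f(\lambda^x_{s-})\bigr]\bigl(dN^x_s - \lambda^x_s\, ds\bigr)$ and rewrites the identity above as $M^f_t = \tilde{M}^f_t$, since the remaining $ds$-integrand is exactly $y \mapsto y\bigl(f(y+\alpha)-f(y)\bigr) - \beta(y-\mu)f'(y) = \mathscr{A}f(y)$ from (\ref{exgenerator}). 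So the whole problem reduces to promoting $\tilde{M}^f_t$ from a local to a true martingale.

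For that promotion I would invoke the standard criterion that $\tilde{M}^f_t$ is a true $\mathscr{F}^x_t$-martingale as soon as $E\bigl[\int_{(0,t]} |f(\lambda^x_s + \alpha) - f(\lambda^x_s)|\, \lambda^x_s\, ds\bigr] < \infty$ for every $t \ge 0$. For $f \in \mathscr{P}$, the integrand $|f(y+\alpha) - f(y)|\, y$ is a polynomial in $y$ of finite degree, and Proposition \ref{strong moment} delivers uniform-in-$t$ exponential moments of $\lambda^x_t$, hence uniform finiteness of every polynomial moment; Fubini then closes the bound, and the same moment control ensures $f(\lambda^x_t), \int_{(0,t]}\mathscr{A}f(\lambda^x_s)ds \in L^1(P)$. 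The main obstacle, such as it is, lies less in any deep analytic difficulty and more in writing the semimartingale change-of-variables step and the localisation through stopping times $\tau^x_n$ carefully enough that one may pass to the limit using the moment bound of Proposition \ref{strong moment}; once the localisation is handled cleanly, the integrability upgrade is essentially automatic.
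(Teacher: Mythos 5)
Your proposal is correct and follows essentially the same route as the paper: both decompose $f(\lambda^x_t)-f(\lambda^x_0)$ via the pathwise change-of-variables formula into the $\mathscr{A}f$ drift plus the compensated jump integral $\int_{(0,t]}\bigl(f(\lambda^x_s+\alpha)-f(\lambda^x_s)\bigr)d\tilde{N}^x_s$, and then upgrade this local martingale to a true martingale using the integrability criterion (Theorem 18.7 of Liptser--Shiryaev in the paper) together with the exponential moment bound of Proposition \ref{strong moment}. The only cosmetic difference is that the paper first reduces to monomials $p(y)=y^m$ by linearity of $\mathscr{A}$, whereas you treat a general polynomial directly.
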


\noindent From Lemma \ref{P in DomA} and the definition of $\mathscr{A}$, $\mathscr{A}^kp \in Dom(\mathscr{A})$ holds for any $p \in \mathscr{P}$ and $k \in \naturels$. Therefore, for any $p \in \mathscr{P}$, an inductive calculation yields
\begin{eqnarray*}
E\left[\left.p(\lambda^x_t) - p(\lambda^x_s)\right| \mathscr{F}^x_s\right]
&=& E\left[\left.\int_{(s,t]}\mathscr{A}p(\lambda^x_{u_1})du_1\right| \mathscr{F}^x_s\right] = \int_{(s,t]}E\left[\left.\mathscr{A}p(\lambda^x_{u_1})\right| \mathscr{F}^x_s\right]du_1\\
&=& (t-s)\mathscr{A}p(\lambda^x_s) + \int_{(s,t]}E\left[\left.\mathscr{A}p(\lambda^x_{u_1}) - \mathscr{A}p(\lambda^x_s)\right| \mathscr{F}^x_s\right]du_1\\
&=& (t-s)\mathscr{A}p(\lambda^x_s) + \int_{(s,t]}E\left[\left.\int_{(s,u_1]}\mathscr{A}^2p(\lambda^x_{u_2})du_2\right| \mathscr{F}^x_s\right]du_1\\
&=& (t-s)\mathscr{A}p(\lambda^x_s) + \int_{(s,t]}\int_{(s,u_1]}E\left[\left.\mathscr{A}^2p(\lambda^x_{u_2})\right| \mathscr{F}^x_s\right]du_2du_1\\
&=&\cdots =\sum^{n-1}_{k=1}\frac{(t-s)^k}{k!}\mathscr{A}^kp(\lambda^x_{s}) + \int_{(s,t]}\int_{(s,u_1]}\cdots\int_{(s,u_{n-1}]}E\left[\left.\mathscr{A}^np(\lambda^x_{u_n})\right| \mathscr{F}^x_s\right]du_n\dots du_2du_1,
\end{eqnarray*}
where $\mathscr{A}^kp(\lambda^x_t)$ means $\mathscr{A}^kp(y)|_{y = \lambda^x_t}$. Furthermore, the remainder term converges to $0$ in $L^1$-sense, that is;

\begin{lemma*}
\label{Remainder}
For any $p \in \mathscr{P}$,
\begin{eqnarray*}
 \int_{(s,t]}\int_{(s,u_1]}\cdots\int_{(s,u_{n-1}]}E\left[\left.\mathscr{A}^np(\lambda^x_{u_n})\right| \mathscr{F}^x_s\right]du_n\dots du_2du_1 \iku 0 \quad \text{as $n\iku\infty$ in $L^1$.}
\end{eqnarray*}
\end{lemma*}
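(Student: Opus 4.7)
The plan is to bound the $L^1$ norm of the $n$-fold iterated integral by a product of three factors: the volume of the simplex (which is $(t-s)^n/n!$), a growth constant $\rho^n$ coming from iterating $\mathscr{A}$ on polynomials, and a uniform moment bound supplied by Proposition~\ref{strong moment}. The factorial in the denominator will then dominate and force convergence to zero.

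First I would observe that, although $\mathscr{A}$ involves a shift by $\alpha$ and a derivative, both operations preserve the space of polynomials and in fact preserve degree: if $\deg p = k$, then both $y(p(y+\alpha)-p(y))$ and $-\beta(y-\mu)p'(y)$ are polynomials of degree at most $k$. Consequently $\mathscr{A}$ restricts to a linear endomorphism of the finite-dimensional space $\mathscr{P}_k$ of polynomials of degree at most $k$. Fixing any norm on $\mathscr{P}_k$ (say, the maximum of the absolute values of the coefficients) we obtain an operator norm $\rho = \|\mathscr{A}\|_{\mathscr{P}_k}<\infty$, and hence
\begin{eqnarray*}
|\mathscr{A}^n p(y)| \;\le\; C\,\rho^n\,(1+|y|)^k \qquad (y\in\reels,\ n\in\naturels),
\end{eqnarray*}
for some constant $C=C(p,k)$ independent of $n$.

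Next I would apply this pointwise bound inside the conditional expectation. By Proposition~\ref{strong moment}, there exists $M_1>0$ with $\sup_{u\ge 0}E[e^{M_1\lambda^x_u}]<\infty$, so in particular
\begin{eqnarray*}
M_k := \sup_{u\ge 0} E\bigl[(1+\lambda^x_u)^k\bigr] \;<\;\infty.
\end{eqnarray*}
Therefore $E[|\mathscr{A}^n p(\lambda^x_{u_n})|\,|\,\mathscr{F}^x_s]$ has $L^1(P)$ norm at most $C\rho^n M_k$, uniformly in $u_n$. Taking $L^1(P)$ norm of the iterated integral, using Fubini and the triangle inequality, and bounding the simplex volume by $(t-s)^n/n!$, I obtain
\begin{eqnarray*}
\left\|\int_{(s,t]}\!\!\cdots\!\int_{(s,u_{n-1}]} E\bigl[\mathscr{A}^n p(\lambda^x_{u_n})\,\big|\,\mathscr{F}^x_s\bigr]\,du_n\cdots du_1 \right\|_{L^1(P)} \;\le\; C\,M_k\,\frac{\bigl(\rho(t-s)\bigr)^n}{n!},
\end{eqnarray*}
which tends to $0$ as $n\to\infty$ for every fixed $s<t$.

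The only real obstacle is isolating the right growth statement for $\mathscr{A}^n p$; once one sees that $\mathscr{A}$ is degree-preserving and so acts on a finite-dimensional subspace, the $\rho^n$ bound is automatic. The uniform moment input is already in hand via Proposition~\ref{strong moment}, and the simplex-volume/$n!$ factor is what actually drives the convergence. Hence the lemma follows.
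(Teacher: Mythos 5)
Your proof is correct, and it reaches the same reduction as the paper --- bounding the iterated integral by $\frac{(t-s)^n}{n!}\sup_{u}E\big[|\mathscr{A}^n p(\lambda^x_u)|\big]$ via Fubini and the tower property, then invoking Proposition \ref{strong moment} for the uniform moment bound --- but it handles the crucial step, the growth of $\mathscr{A}^n p$, in a genuinely different and slicker way. The paper writes $\mathscr{A}y^k = C_k y^k + \cdots + C_1 y + C_0$ with $C_k = k(\alpha-\beta)$, expands $\mathscr{A}^n y^k$ by hand into telescoping sums of the form $\sum_i C^i C_k^{n-1-i}$, and runs an induction on the degree via the auxiliary hypothesis ASS($k$) to control all the lower-order contributions. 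You instead observe that $\mathscr{A}$ preserves the $(k+1)$-dimensional space $\mathscr{P}_k$ of polynomials of degree at most $k$ (both $y(p(y+\alpha)-p(y))$ and $-\beta(y-\mu)p'(y)$ have degree $\le k$, as you note), so $\|\mathscr{A}^n\|_{\mathscr{P}_k}\le\rho^n$ for the operator norm $\rho$ of the restriction, which yields $|\mathscr{A}^n p(y)|\le C\rho^n(1+|y|)^k$ in one line; the factorial then dominates $\rho^n(t-s)^n$. Your argument buys brevity and transparency --- the entire ASS($k$) induction and the explicit coefficient bookkeeping become unnecessary --- while the paper's computation has the minor side benefit of exhibiting the leading eigenvalue $k(\alpha-\beta)$ of $\mathscr{A}$ on $\mathscr{P}_k$ explicitly, which is reused nowhere essential. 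I see no gap in your version: the degree-preservation claim is exactly right, the finite-dimensionality makes the geometric bound automatic, and $\sup_u E[(1+\lambda^x_u)^k]<\infty$ does follow from the exponential moment in Proposition \ref{strong moment}.
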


\noindent Set the operator $e^{(t-s)\mathscr{A}}f(\lambda^x_s) = \sum^{\infty}_{k=0}\frac{(t-s)^k}{k!}\mathscr{A}^kf(\lambda^x_{s})$ for a function $f$. From Lemma \ref{Remainder}, we have $E[p(\lambda^x_t)| \mathscr{F}^x_s] = e^{(t-s)\mathscr{A}}p(\lambda^x_s) \ a.s.$ for any $p \in \mathscr{P}$. Then, since $e^{(t-s)\mathscr{A}}p(\lambda^x_s)$ is $\sigma(\lambda^x_s)$-measurable, we immediately get the Markovian property only for $p \in \mathscr{P}$, i.e.
\begin{eqnarray}
\label{Markov for polynomial}
E[p(\lambda^x_t)| \mathscr{F}^x_s] = E[p(\lambda^x_t)| \lambda^x_s] \quad a.s.
\end{eqnarray}
For the sake of Proposition \ref{strong moment}, the equation (\ref{Markov for polynomial}) can be extended to the Markovian property for any bounded function. We summarize this statement as the following theorem.

\begin{theorem*}
\label{Markovian property}
For any bounded measurable function $f$, $E[f(\lambda^x_t)| \mathscr{F}^x_s] = E[f(\lambda^x_t)| \lambda^x_s] \ \ a.s.$
\end{theorem*}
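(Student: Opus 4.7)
The plan is to extend the polynomial Markov identity (\ref{Markov for polynomial}) to all bounded Borel functions via an $L^2$-approximation argument driven by Proposition \ref{strong moment}.

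By Proposition \ref{strong moment}, the law $\nu_t$ of $\lambda^x_t$ admits a finite exponential moment, so its moment sequence $\{m_n\}$ satisfies Carleman's criterion and the Hamburger moment problem for $\nu_t$ is determinate. A classical consequence is that the polynomials are dense in $L^2(\nu_t)$.

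Given any bounded Borel function $f$, we have $f \in L^2(\nu_t)$ since $\nu_t$ is a probability measure. Choose polynomials $p_n$ with $\|p_n - f\|_{L^2(\nu_t)} \to 0$, which is equivalent to $p_n(\lambda^x_t) \to f(\lambda^x_t)$ in $L^2(P)$ and hence in $L^1(P)$. By the $L^1$-contractivity of conditional expectation,
\begin{eqnarray*}
E[p_n(\lambda^x_t) \mid \mathscr{F}^x_s] \to E[f(\lambda^x_t) \mid \mathscr{F}^x_s], \quad E[p_n(\lambda^x_t) \mid \lambda^x_s] \to E[f(\lambda^x_t) \mid \lambda^x_s]
\end{eqnarray*}
in $L^1(P)$. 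Applying (\ref{Markov for polynomial}) to each $p_n$ gives equality of the two sequences above for every $n$, so their $L^1$-limits agree: $E[f(\lambda^x_t) \mid \mathscr{F}^x_s] = E[f(\lambda^x_t) \mid \lambda^x_s]$ almost surely.

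The main obstacle is establishing density of polynomials in $L^2(\nu_t)$, which is the one step that genuinely uses Proposition \ref{strong moment}; once this classical fact is in place, the remainder of the proof is a routine passage to the limit under $L^1$-contractivity of conditional expectation, with no delicate path-dependent analysis required.
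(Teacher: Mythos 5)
Your proof is correct, but it takes a genuinely different route from the paper. You pass from the polynomial identity (\ref{Markov for polynomial}) to general bounded $f$ in one step, by using the exponential moment of Proposition \ref{strong moment} to get determinacy of the moment problem for the law $\nu_t$ of $\lambda^x_t$ and hence (by M.~Riesz's theorem; or, more directly, by the standard argument that a $g \in L^2(\nu_t)$ orthogonal to all polynomials has an analytic ``Laplace transform'' $z \mapsto \int e^{zy}g(y)\nu_t(dy)$ on $\{\operatorname{Re}z < M_1/2\}$ vanishing to all orders at $0$, hence identically, hence $g=0$) density of polynomials in $L^2(\nu_t)$; the $L^1$-contractivity of conditional expectation then finishes the argument. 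The paper instead first upgrades (\ref{Markov for polynomial}) to the characteristic function, i.e.\ proves $E[e^{iu\lambda^x_t}|\mathscr{F}^x_s]=E[e^{iu\lambda^x_t}|\lambda^x_s]$ for all real $u$ (Lemma \ref{exp markov}) by showing $z\mapsto E[e^{z\lambda^x_t}g]$ is holomorphic on a half-plane, matching it with the polynomial expansion on a real interval, and invoking the identity theorem; it then recovers $P[\lambda^x_t\in(a,b]\,|\,\cdot\,]$ via the L\'evy inversion formula and concludes by the monotone class theorem. Both arguments rest entirely on Proposition \ref{strong moment}. Yours is shorter and avoids the L\'evy inversion and the Dirichlet-integral domination step, at the price of importing the nontrivial classical fact that an exponential moment (or determinacy) forces polynomial density in $L^2$; the paper's version is longer but more self-contained, using only the identity theorem and the inversion formula. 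If you keep your version, I would recommend either citing Riesz's theorem precisely or including the two-line orthogonality-plus-analyticity argument sketched above, since ``determinate $\Rightarrow$ polynomials dense in $L^2$'' is true but not obvious (for indeterminate measures density can fail, and the equivalence involves $L^2((1+x^2)d\nu)$ rather than $L^2(\nu)$).
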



\subsection{Markovian property and Ergodicity of Hawkes core process}

To consider the asymptotic expansion for the MLE of the Hawkes process, we have to deal with the derivatives of the log-likelihood process of the Hawkes process with respect to its parameters. Moreover, these derivatives are represented as functionals of the derivatives of the Hawkes intensity process with respect to time. First, we introduce the concept of the Hawkes core process.  For $x_1 \in \reels_+$, let $N^{x_1}_t$ be an exponential Hawkes process with intensity $\lambda_t^{x_1} = \mu + x_1e^{-\beta t} + \int_{(0,t)}\alpha e^{-\beta(t-u)}dN_u^{x_1}$ defined as in Definition \ref{hawkes def}. 

\begin{definition*}
For $n \in \naturels$ and $x = (x_1, \dots, x_n)\in \reels^{n}$, we define
\begin{eqnarray*}
X_t^{x,(n)} = e^{-\beta t} \sum_{k=1}^{n} \Bigg( \begin{array}{cc} n-1 \\ k-1 \end{array} \Bigg) x_k t^{n - k} + \int_{(0,t)} \alpha(t-u)^{n-1} e^{-\beta(t-u)}dN_u^{x_1}.
\end{eqnarray*}
We call $X_t^{x,(n)}$ as {\bf the $n$-th Hawkes core process of $N^{x_1}_t$ }.
\end{definition*}

Remark that $X_t^{x,(n)}$ is obviously $\sigma\big(X_u^{x_1,(1)} ; u \le t\big)$-measurable for any $n \in \naturels$. However, $X_t^{x,(n)} - X_s^{x,(n)}$ is not $\sigma\big(X_u^{x_1,(1)} ; s < u \le t\big)$-measurable for $0 \le s < t$ and $n \ge 2$. When we consider the second order derivative of the Hawkes intensity $\lambda_t^{x_1}$, the following process is essential:

\begin{eqnarray}
\label{X}
X_t^x 
= \left(X_t^{x,(1)}, X_t^{x,(2)}, X_t^{x,(3)}  \right)'
\end{eqnarray}
where $x = (x_1, x_2, x_3) \in \reels^3_+$.\footnote{$X_t^{x,(1)}$ and $X_t^{x,(2)}$ mean $X_t^{x_1,(1)}$ and $X_t^{(x_1, x_2),(2)}$, respectively.} The detailed reason why we consider $X_t^x$ is explained in Section 4. Here, we reveal the properties of $X_t^x$. In this subsection, we deal only with the overview. The detail of proofs can be found in Appendix. 

We can also deduce the Markovian property of the process $X^x_t$.

\begin{proposition*}
\label{X Markovian property}
For any bounded measurable function $f$, $E[f(X^x_t)| \mathscr{F}^x_s] = E[f(X^x_t)| X^x_s] \ \ a.s.$ where $\mathscr{F}^x_t = \sigma( X^x_s ; s \le t )$.
\end{proposition*}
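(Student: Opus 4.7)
The plan is to mirror the two-step argument used in Section 3.2 for $\lambda^x_t$, now in the three-dimensional setting of $X^x_t$. First I would identify a candidate extended generator $\mathscr{B}$ for $(X^x_t, \mathscr{F}^x_t)$ on smooth $f : \reels^3 \to \reels$. Differentiating the three coordinates of $X^x_t$ between the jumps of $N^{x_1}$ gives the ODE
\begin{eqnarray*}
\dot X_t^{x,(1)} = -\beta X_t^{x,(1)}, \quad \dot X_t^{x,(2)} = X_t^{x,(1)} - \beta X_t^{x,(2)}, \quad \dot X_t^{x,(3)} = 2 X_t^{x,(2)} - \beta X_t^{x,(3)},
\end{eqnarray*}
while at each jump only the first coordinate is incremented by $\alpha$, at rate $\lambda^{x_1}_t = \mu + X_t^{x,(1)}$. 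Hence the natural candidate is
\begin{eqnarray*}
\mathscr{B} f(y) = (\mu + y_1)\bigl(f(y_1 + \alpha, y_2, y_3) - f(y)\bigr) - \beta y_1 \partial_{y_1} f + (y_1 - \beta y_2) \partial_{y_2} f + (2 y_2 - \beta y_3) \partial_{y_3} f,
\end{eqnarray*}
for $y = (y_1, y_2, y_3)$. An application of It\^o's formula to $f(X^x_t)$, together with the fact that $\int_0^t \lambda_s^{x_1}\,ds$ compensates $N^{x_1}$, shows that $f(X^x_t) - f(X^x_0) - \int_0^t \mathscr{B} f(X^x_s)\,ds$ is a local martingale, identifying $\mathscr{B}$ as an extended generator of $X^x_t$ on a suitable class.

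Next I would show $\mathscr{P}(\reels^3) \subset \text{Dom}(\mathscr{B})$, the analogue of Lemma \ref{P in DomA}. The required integrability of $\int_0^t \mathscr{B} p(X^x_s)\,ds$ reduces to uniform-in-$t$ polynomial moment bounds on $X^{x,(1)}$, $X^{x,(2)}$ and $X^{x,(3)}$; the first is already supplied by Proposition \ref{strong moment}, and the higher coordinates can be controlled by writing
\begin{eqnarray*}
|X^{x,(n)}_t| \le C_n(x)\,e^{-\beta t}(1+t^{n-1}) + \alpha \int_0^t (t-u)^{n-1} e^{-\beta(t-u)} dN^{x_1}_u,
\end{eqnarray*}
and observing that $E\bigl[\int_0^t (t-u)^{n-1} e^{-\beta(t-u)} dN^{x_1}_u\bigr] = \int_0^t (t-u)^{n-1} e^{-\beta(t-u)} E[\lambda^{x_1}_u]\,du$, which is uniformly bounded in $t$ thanks to the exponential damping $e^{-\beta(t-u)}$ combined with Proposition \ref{strong moment}; higher moments follow from iterating the same argument with Burkholder--Davis--Gundy applied to the compensated jumps.

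Once $\mathscr{P}(\reels^3) \subset \text{Dom}(\mathscr{B})$, I iterate the Dynkin formula exactly as in the scalar case to obtain, for each $p \in \mathscr{P}(\reels^3)$,
\begin{eqnarray*}
E[p(X^x_t) \mid \mathscr{F}^x_s] = \sum_{k=0}^{n-1} \frac{(t-s)^k}{k!} \mathscr{B}^k p(X^x_s) + R_n,
\end{eqnarray*}
and prove $R_n \to 0$ in $L^1$ by reproducing Lemma \ref{Remainder}, using that $\mathscr{B}$ raises the polynomial degree by at most one in each coordinate and that the resulting growth is absorbed by the exponential moment bound on $\lambda^{x_1}_t$. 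The right-hand side is $\sigma(X^x_s)$-measurable, so we obtain the Markovian identity on polynomials, i.e.\ $E[p(X^x_t) \mid \mathscr{F}^x_s] = E[p(X^x_t) \mid X^x_s]$ a.s. Finally, the extension to bounded measurable $f$ follows exactly as in the passage from (\ref{Markov for polynomial}) to Theorem \ref{Markovian property}: the exponential moment estimate on $\lambda^{x_1}$, together with the polynomial moment bounds on $X^{x,(2)}, X^{x,(3)}$, guarantees that the conditional law of $X^x_t$ is determined by its polynomial moments, so the identity extends first to continuous bounded $f$ and then to all bounded measurable $f$ by a monotone class argument.

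The main obstacle will be controlling the remainder $R_n$ in three dimensions. The iterates $\mathscr{B}^k p$ mix the jump part $(\mu + y_1)\Delta_\alpha$ with the three linear drift operators, producing cross terms whose degree grows with $k$; making the $L^1$-vanishing of $R_n$ explicit requires a careful uniform bound of the form $\|\mathscr{B}^k p(X^x_u)\|_{L^1} \le C^k k!/k!'$ that still leaves a convergent factorial series after integrating against $(t-s)^k/k!$. A secondary difficulty is that, unlike $\lambda^x_t$, the higher-order core processes are not stationary and $E[X^{x,(n)}_t]$ depends on $t$; the exponential damping $e^{-\beta(t-u)}$ inside the kernel is what ultimately rescues uniform-in-$t$ bounds and makes the whole scheme work.
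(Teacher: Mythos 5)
Your plan is workable in outline, but it is a genuinely different and considerably heavier route than the one the paper takes, and as written it contains one real gap. The paper does not redo the extended-generator machinery in three dimensions at all. Instead it exploits the flow structure of the higher core processes: for $t \ge s$,
\begin{eqnarray*}
X^{x,(2)}_t = \big( X^{x,(1)}_s(t-s) + X^{x,(2)}_s \big)e^{-\beta(t-s)} + \int_{[s,t)}\alpha(t-u)e^{-\beta(t-u)}dN^{x_1}_u,
\end{eqnarray*}
and similarly for $X^{x,(3)}_t$, so that $X^x_t$ is a deterministic functional of $(X^{x,(2)}_s, X^{x,(3)}_s)$ together with the path $(X^{x,(1)}_u)_{u\in[s,t]}$. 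The three-dimensional Markov property then reduces, by a product and monotone-class argument, to the already established scalar Markov property of $\lambda^x$ (Theorem \ref{Markovian property}). That buys a short proof with no new analytic estimates, whereas your route reproves everything from scratch for the vector process.

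Two points in your plan need attention. First, the obstacle you flag about the remainder $R_n$ has a clean resolution that you do not state: your operator $\mathscr{B}$ (which coincides with the paper's $\mathscr{A}_X$ in Lemma \ref{X generator}) preserves the \emph{total} degree of a polynomial --- the jump part $(\mu+y_1)\big(f(y_1+\alpha,y_2,y_3)-f(y)\big)$ and each drift term $y_1\partial_{y_2}$, $2y_2\partial_{y_3}$, $-\beta y_i\partial_{y_i}$ map the space $\mathscr{P}_{\le k}$ of polynomials of total degree at most $k$ into itself --- so $\mathscr{B}$ acts as a fixed linear operator on a finite-dimensional space, $\sum_n \frac{(t-s)^n}{n!}\mathscr{B}^n p$ converges, and the remainder vanishes once the uniform moment bounds are in hand; if the degree really grew with the iteration, as you fear, the argument would fail. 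Second, and this is the genuine gap, the final extension step is wrong as stated: polynomial moment bounds on $X^{(2)},X^{(3)}$ do \emph{not} guarantee that the conditional law is determined by its moments (the multivariate moment problem can be indeterminate even when all moments are finite). You need exponential moments of \emph{all three} coordinates, $\sup_t E[e^{MX^x_t}]<\infty$ for some positive vector $M$; this is exactly Proposition \ref{X strong moment}, proved via the three-dimensional drift criterion of Lemma \ref{X generator}, and it is what allows the holomorphy, identity-theorem and L\'evy-inversion argument of Lemma \ref{exp markov} and Theorem \ref{Markovian property} to go through in $\reels^3$. Note that this does not follow from the exponential moments of $\lambda^{x_1}$ alone, since the kernels $(t-u)^{n-1}e^{-\beta(t-u)}$ do not admit an a.s.\ pathwise domination of $X^{(2)},X^{(3)}$ by a constant multiple of $X^{(1)}$.
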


Second, we see the time-homogeneous Markovian property. We define the Markov kernel as below. For $x \in \reels^3_+$ and $A \in \mathscr{B}(\reels^3_+)$,
\begin{eqnarray}
\label{Markov kernel}
P^t(x, A) = P[X^x_t  \in A].
\end{eqnarray}
For this Markov kernel, the time-homogeneous property holds, i.e.;
\begin{proposition*}
\label{X homo property}
For any bounded measurable function $f$,
\begin{eqnarray*}
E\left[\left.f(X^x_t) \right| \mathscr{F}^x_s\right] = \int_{\reels^3_+} f(y) P^{t-s}(X^x_s, dy) \quad a.s.
\end{eqnarray*}
\end{proposition*}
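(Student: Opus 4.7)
The approach is to combine the Markov property from Proposition \ref{X Markovian property} with a direct identification of the conditional law of the ``future'' as a Hawkes core process restarted at $X_s^x$. Fix $0 \le s < t$, set $\tau = t - s$, and define the shifted point process $\tilde N_w := N^{x_1}_{s+w} - N^{x_1}_s$ for $w \ge 0$. It suffices to show that, conditionally on $\mathscr{F}^x_s$, the vector $X_t^x$ has the same distribution as $X_\tau^y$ with $y = X_s^x$, since this is exactly the integral $\int f(z)\, P^\tau(X_s^x, dz)$ appearing in the statement.

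The first step is an algebraic decomposition. Splitting every integral over $[0, t)$ into $(0, s)$ and $[s, t)$, and expanding $(t-u)^{n-1}$ via $(t-u) = \tau + (s-u)$ inside the polynomial prefactors, I would establish
\begin{eqnarray*}
X_t^{x,(1)} &=& e^{-\beta\tau} X_s^{x,(1)} + \int_{(0,\tau)} \alpha e^{-\beta(\tau - w)}\, d\tilde N_w, \\
X_t^{x,(2)} &=& e^{-\beta\tau}\bigl( X_s^{x,(1)} \tau + X_s^{x,(2)}\bigr) + \int_{(0,\tau)} \alpha(\tau - w) e^{-\beta(\tau - w)}\, d\tilde N_w,
\end{eqnarray*}
and an analogous (longer) expression for $X_t^{x,(3)}$ involving $X_s^{x,(1)}\tau^2$, $2X_s^{x,(2)}\tau$ and $X_s^{x,(3)}$ as the coefficients of $e^{-\beta\tau}$. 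Equivalently, there is a deterministic measurable map $\Phi_\tau$ with $X_t^x = \Phi_\tau(X_s^x, \tilde N)$, depending on the past of $N^{x_1}$ only through the state $X_s^x$.

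The second step identifies the conditional law of $\tilde N$ given $\mathscr{F}^x_s$. An analogous rearrangement shows that the $\mathscr{F}^x_{s+w}$-predictable intensity of $N^{x_1}$ at time $s+w$ equals $\mu + e^{-\beta w} X_s^{x,(1)} + \int_{(0,w)} \alpha e^{-\beta(w-v)}\, d\tilde N_v$. Writing $\tilde{\mathscr{F}}_w := \mathscr{F}^x_{s+w}$, this is exactly the Hawkes intensity of Definition \ref{HawkesProcess} with initial state $X_s^{x,(1)}$. Invoking uniqueness in law from the existence construction recalled in the remark following Definition \ref{HawkesProcess} (applied on the conditional probability space), the regular conditional distribution of $\tilde N$ given $\mathscr{F}^x_s$ coincides almost surely with the unconditional law of a Hawkes process $N^{X_s^{x,(1)}}$. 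Combining this with $X_t^x = \Phi_\tau(X_s^x, \tilde N)$ and definition (\ref{Markov kernel}) yields $E[f(X_t^x) \mid \mathscr{F}^x_s] = \int_{\reels_+^3} f(z)\, P^\tau(X_s^x, dz)$ a.s., as desired.

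The main obstacle lies in the second step: transferring uniqueness in law of the Hawkes process from the original to the conditional probability space. The cleanest route is to repeat the thinning construction used to prove existence in Definition \ref{HawkesProcess} on the shifted filtration $\tilde{\mathscr{F}}_w$, so that the conditional law of $\tilde N$ is produced pathwise from an independent Poisson driving noise and a deterministic functional of $X_s^{x,(1)}$; alternatively, one can appeal to a measurable selection to obtain a regular conditional probability under which $\tilde N$ is adapted to a suitable completion of $(\tilde{\mathscr{F}}_w)$ and satisfies the Hawkes equation with initial condition $X_s^{x,(1)}$. Once this identification is in place, the rest of the argument is purely algebraic.
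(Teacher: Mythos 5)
Your Step 1 --- the decomposition $X^x_t = \Phi_{t-s}(X^x_s, \tilde N)$ with the explicit formulas for $X^{x,(2)}_t$ and $X^{x,(3)}_t$ --- is exactly the algebraic identity the paper uses (it appears in the proof of Proposition \ref{X Markovian property} and is reused in the proof of Proposition \ref{X homo property}). Where you genuinely diverge is Step 2. The paper never argues about the conditional law of the shifted point process; instead it proves a path-space homogeneity lemma (Lemma \ref{Y homo}) for the one-dimensional process $X^{(1)}$ alone: starting from $E[p(X^{x,(1)}_t)\,|\,X^{x,(1)}_s] = e^{(t-s)\mathscr{A}}p(\lambda^{x_1}_s-\mu) = E[p(X^{y,(1)}_{t-s})]|_{y=X^{x,(1)}_s}$ for polynomials $p$, it extends to exponentials via the moment bound of Proposition \ref{strong moment} and the identity theorem, then to finite-dimensional distributions by iterating the Markov property, and finally to bounded functionals of the whole path by the monotone class theorem; the observation that the path of $X^{(1)}$ on $[s,t]$ determines the jumps of $N^{x_1}$ on $[s,t)$ then converts this into the statement about $X^x_t$. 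Your route instead computes the $\mathscr{F}^x_{s+w}$-intensity of the shifted process and appeals to uniqueness in law. That is a legitimate and more conceptual argument, but it needs an ingredient the paper deliberately avoids: the Hawkes process here is defined axiomatically by its predictable intensity, and the remark after Definition \ref{HawkesProcess} asserts only existence, not uniqueness in law. To close your Step 2 you must either invoke a Jacod-type theorem (the compensator with respect to the internal history determines the law of a point process) \emph{and} check that the compensator identity survives passage to a regular conditional probability, or redo the construction pathwise from a Poisson driving measure so that the post-$s$ noise is independent of $\mathscr{F}^x_s$ --- essentially the fix you sketch, which does work but is a real piece of machinery rather than a remark. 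The trade-off: your approach buys a shorter, filtration-theoretic proof at the cost of importing uniqueness-in-law results; the paper's approach stays self-contained within the extended-generator framework it has already built (Lemmas \ref{P in DomA} and \ref{Remainder}), at the cost of the longer polynomial-to-bounded-function bootstrap.
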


We concretely give the invariant measure of $X_t^x$ under our settings. On some probability space $(\bar{\Omega}, \bar{\mathscr{F}}, \bar{P})$, there exists a stationary multivariate Hawkes process $(\bar{N}=(\bar{N}^{(1)},\bar{N}^{(2)},\bar{N}^{(3)}), \mathscr{F}^{\bar{N}}_t)$ with the $\mathscr{F}^{\bar{N}}_t$-intensity $\bar{\lambda}_t = (\bar{\lambda}^{(1)}_t,\bar{\lambda}^{(2)}_t,\bar{\lambda}^{(3)}_t)$ such that
\begin{eqnarray*}
\bar{\lambda}^{(1)}_t = \mu + \int_{(-\infty, t)}\alpha e^{-\beta(t-s)}d\bar{N}^{(1)}_s,\quad
\bar{\lambda}^{(2)}_t = \mu + \int_{(-\infty, t)}\alpha (t-s)e^{-\beta(t-s)}d\bar{N}^{(1)}_s\\
\text{and} \quad
\bar{\lambda}^{(3)}_t = \mu + \int_{(-\infty, t)}\alpha (t-s)^2e^{-\beta(t-s)}d\bar{N}^{(1)}_s,
\end{eqnarray*}
where $\mathscr{F}^{\bar{N}}_t = \vee_{i=1}^3 \mathscr{F}^{\bar{N}^{(i)}}_t$ and $ \mathscr{F}^{\bar{N}^{(i)}}_t = \sigma(\bar{N}^{(i)}[C]; C \in \mathscr{B}(\reels), C \subset (-\infty, t])$, see Theorem 7 in \cite{Bremaud}. We write $\bar{N} = \bar{N}^{(1)}$, allowing the abuse of the notation. Then, the following process $\bar{X}$ is stationary:
\begin{eqnarray*}
\bar{X}_t
&=& \Bigg(\int_{(-\infty,t)} \alpha e^{-\beta(t-u)}d\bar{N}_u, \quad \int_{(-\infty,t)} \alpha (t-u)e^{-\beta(t-u)}d\bar{N}_u, \quad \int_{(-\infty,t)} \alpha (t-u)^2e^{-\beta(t-u)}d\bar{N}_u\Bigg)'.
\end{eqnarray*}

Denote the distribution of $\bar{X}_t$ by $P^{\bar{X}}$.\footnote{An abusive use of ``P" : $P^{\bar{X}}[A] = \bar{P}[\bar{X} \in A]$ for $A \in \mathscr{B}(\reels^3_+)$.}

\begin{proposition*}
\label{inv meas}
$P^{\bar{X}}$ is the invariant probability measure for $X^x_t$, i.e. for any $t \ge 0$ and $A \in \mathscr{B}(\reels^3_+)$, 
\begin{eqnarray*}
P^{\bar{X}}[A] = \int_{\reels^3_+} P^t(x, A) P^{\bar{X}}(dx).
\end{eqnarray*}
\end{proposition*}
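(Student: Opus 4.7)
The strategy is to identify, pathwise, the stationary process $\bar{X}$ (restricted to $[0,\infty)$) with the Hawkes core process $X^x$ started at $x=\bar{X}_0$, and then invoke the stationarity of $\bar{X}$.

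\emph{Step 1 (pathwise decomposition).} Fix $t\ge 0$ and split $\bar{X}^{(k)}_t=\int_{(-\infty,t)}\alpha(t-u)^{k-1}e^{-\beta(t-u)}d\bar{N}_u$ as an integral over $(-\infty,0)$ plus one over $(0,t)$. In the past part, expand $(t-u)^{k-1}=\sum_{j=1}^{k}\binom{k-1}{j-1}t^{k-j}(-u)^{j-1}$ and pull the factor $e^{-\beta t}$ outside; since $\bar{X}^{(j)}_0=\int_{(-\infty,0)}\alpha(-u)^{j-1}e^{\beta u}d\bar{N}_u$, this gives for $k=1,2,3$
\begin{eqnarray*}
\bar{X}^{(k)}_t=e^{-\beta t}\sum_{j=1}^{k}\binom{k-1}{j-1}\bar{X}^{(j)}_0\,t^{k-j}+\int_{(0,t)}\alpha(t-u)^{k-1}e^{-\beta(t-u)}d\bar{N}_u,
\end{eqnarray*}
which is exactly the defining formula of $X^{\bar{X}_0,(k)}_t$ with $\bar{N}$ playing the role of the driving point process.

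\emph{Step 2 (identification of the driving process).} Apply the same splitting to $\bar{\lambda}^{(1)}_t=\mu+\int_{(-\infty,t)}\alpha e^{-\beta(t-s)}d\bar{N}_s$ to obtain $\bar{\lambda}^{(1)}_t=\mu+e^{-\beta t}\bar{X}^{(1)}_0+\int_{(0,t)}\alpha e^{-\beta(t-s)}d\bar{N}_s$, which is precisely $\lambda^{\bar{X}^{(1)}_0}_t$ from Definition \ref{HawkesProcess}. Consequently, with respect to the shifted filtration $(\mathscr{F}^{\bar{N}}_{t})_{t\ge 0}$ with initial $\sigma$-field $\mathscr{F}^{\bar{N}}_0$, the point process $\bar{N}|_{(0,\infty)}$ has intensity $\lambda^{\bar{X}^{(1)}_0}_\cdot$. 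By the standard uniqueness in law for a simple point process characterized by a predictable intensity of Hawkes form (the same argument as for existence/uniqueness in Br\'emaud's Theorem 1 cited after Definition \ref{HawkesProcess}), conditionally on $\bar{X}^{(1)}_0=x_1$, the law of $\bar{N}|_{(0,\infty)}$ coincides with the law of $N^{x_1}$.

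\emph{Step 3 (kernel identification and invariance).} Steps 1 and 2 together show that the conditional distribution of $\bar{X}|_{[0,\infty)}$ given $\bar{X}_0=x$ equals the distribution of $X^x$; in particular, for every $t\ge 0$ and $A\in\mathscr{B}(\reels^3_+)$,
\begin{eqnarray*}
\bar{P}\bigl[\bar{X}_t\in A\bigm|\bar{X}_0=x\bigr]=P\bigl[X^x_t\in A\bigr]=P^t(x,A).
\end{eqnarray*}
Since $\bar{X}$ is stationary, $\bar{X}_t\sim P^{\bar{X}}$, and integrating over $P^{\bar{X}}$ yields
\begin{eqnarray*}
P^{\bar{X}}[A]=\bar{P}[\bar{X}_t\in A]=\int_{\reels^3_+}\bar{P}\bigl[\bar{X}_t\in A\bigm|\bar{X}_0=x\bigr]P^{\bar{X}}(dx)=\int_{\reels^3_+}P^t(x,A)\,P^{\bar{X}}(dx),
\end{eqnarray*}
which is the stated invariance. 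The only delicate point is the conditional identification in Step 2: one must verify that the intensity characterization of $\bar{N}$ with respect to the two-sided filtration $\mathscr{F}^{\bar{N}}_t$ really yields, after conditioning on $\mathscr{F}^{\bar{N}}_0$, the Hawkes dynamics in the sense of Definition \ref{HawkesProcess}. This is the main technical obstacle, and it is handled by the standard Br\'emaud-type uniqueness of a simple point process determined by a prescribed $\mathscr{F}_t$-intensity.
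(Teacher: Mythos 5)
Your proof is correct in outline, but it takes a genuinely different route from the paper's. The paper reduces the proposition to its Lemma \ref{5E} (the $s=0$ case of which is exactly your Step 3 identity $\bar{P}[\bar{X}_t\in A\,|\,\bar{X}_0=x]=P^t(x,A)$) and proves that lemma analytically: it verifies that $\bar{\lambda}^{(1)}$ satisfies the same drift inequality as $\lambda^x$, derives $\sup_t\bar{E}[e^{M_1\bar{\lambda}^{(1)}_t}]<\infty$, shows $\bar{E}[p(\bar{\lambda}^{(1)}_t)\,|\,\bar{\lambda}^{(1)}_s]=e^{(t-s)\mathscr{A}}p(\bar{\lambda}^{(1)}_s)$ for polynomials $p$ via the extended generator, and then bootstraps to bounded measurable functions (identity theorem/L\'evy inversion) and to the three-component vector (the pathwise recursions and monotone-class arguments of Lemma \ref{Y homo} and Proposition \ref{X homo property}); the final integration against $P^{\bar{X}}$ using stationarity is identical to yours. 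Your argument instead identifies the entire post-zero path of $\bar{X}$ with the core process $X^{\bar{X}_0}$ driven by $\bar{N}|_{(0,\infty)}$ (your Steps 1--2 are a correct pathwise computation, noting $\bar{N}\{0\}=0$ a.s.) and then invokes uniqueness in law of a simple point process with prescribed predictable intensity, conditionally on $\mathscr{F}^{\bar{N}}_0$. This is shorter, avoids the generator machinery entirely, and delivers the stronger conclusion that the whole conditional path law of $\bar{X}|_{[0,\infty)}$ given $\bar{X}_0=x$ is that of $X^x$; the price is the conditional uniqueness-in-law step, which the paper never states (it cites Br\'emaud only for existence). You flag this correctly as the main obligation; it can be discharged without new tools, since the conditional inter-arrival densities $f^{\Delta\tau^x_i}(t\,|\,X^x_{\tau^x_{i-1}}=y)$ written out in the paper's proof of Proposition \ref{geometric ergodicity} determine the law of the jump times from the intensity alone, so with that step made explicit your proof is complete.
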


Furthermore, $X^x_t$ has a strong finiteness of moments same as $\lambda^x_t$

\begin{proposition*}
\label{X strong moment}
There exists a positive constant vector $M = (M_1, M_2, M_3)$ such that 
\begin{eqnarray*}
\sup_{t \in \reels_+}E\left[e^{MX^x_t}\right] < \infty.
\end{eqnarray*}
\end{proposition*}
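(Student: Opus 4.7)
The plan is to generalize the approach of Lemma \ref{generator} and Proposition \ref{strong moment} to three dimensions: identify the extended generator $\mathscr{B}$ of $(X_t^x, \mathscr{F}^x_t)$ on $\reels^3_+$, exhibit an exponential Lyapunov function $V(y) = e^{M_1y_1 + M_2y_2 + M_3y_3}$ satisfying a Foster--Lyapunov drift inequality $\mathscr{B}V \le -KV + L$, and conclude by a Gronwall-type argument applied to $\varphi(t) := E[V(X_t^x)]$.

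To derive $\mathscr{B}$, one differentiates the defining expression of $X_t^{x,(n)}$ to see that between the jump times of $N^{x_1}$ the components evolve by
\begin{eqnarray*}
\dot{X}_t^{x,(1)} = -\beta X_t^{x,(1)},\quad \dot{X}_t^{x,(2)} = X_t^{x,(1)} - \beta X_t^{x,(2)},\quad \dot{X}_t^{x,(3)} = 2X_t^{x,(2)} - \beta X_t^{x,(3)},
\end{eqnarray*}
while at jump times only $X^{x,(1)}$ jumps, by $\alpha$, since the kernels $\alpha(t-u)^{n-1}$ vanish at $u=t$ for $n \ge 2$. Combined with the jump rate $\mu + X_{t-}^{x,(1)}$, this gives for sufficiently smooth $f$
\begin{eqnarray*}
\mathscr{B}f(y) = (\mu+y_1)\bigl[f(y_1+\alpha, y_2, y_3) - f(y)\bigr] - \beta y_1 \partial_{y_1}f(y) + (y_1-\beta y_2)\partial_{y_2}f(y) + (2y_2-\beta y_3)\partial_{y_3}f(y).
\end{eqnarray*}

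Plugging in $V(y) = e^{M_1y_1+M_2y_2+M_3y_3}$ yields
\begin{eqnarray*}
\frac{\mathscr{B}V(y)}{V(y)} = \mu(e^{M_1\alpha}-1) + y_1\bigl[(e^{M_1\alpha}-1)-\beta M_1+M_2\bigr] + y_2(-\beta M_2+2M_3) - \beta M_3\, y_3.
\end{eqnarray*}
Since $\alpha < \beta$, one may choose the $M_i$ hierarchically: first pick $M_1 > 0$ so small that $(e^{M_1\alpha}-1) - \beta M_1 = (\alpha-\beta)M_1 + O(M_1^2) < 0$; then $M_2 > 0$ so small that the coefficient of $y_1$ remains negative; finally $M_3 \in (0, \beta M_2/2)$ so that the coefficient of $y_2$ is negative too. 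With such a choice the bracket is (bounded) minus a positive linear form in $y$, so $\mathscr{B}V(y) \le -KV(y) + L$ for some $K, L > 0$. Localizing by the jump times $\tau_n^{x_1}$ (which tend to $\infty$ a.s.\ by Remark \ref{finite path}), applying Dynkin's formula up to $t \wedge \tau_n^{x_1}$, and letting $n \to \infty$ then gives $\varphi(t) \le V(x) + \int_0^t (-K\varphi(s) + L)\, ds$, and Gronwall's inequality forces $\sup_{t \ge 0} \varphi(t) \le V(x) + L/K < \infty$, which is the stated bound with $M = (M_1, M_2, M_3)$.

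The main technical obstacle I anticipate is the rigorous application of the Dynkin identity to the unbounded function $V$: the natural strategy is the localization by $\tau_n^{x_1}$ just described, combined with splitting $\mathscr{B}V$ into its negative and positive parts so that monotone convergence disposes of the negative part while dominated convergence, using Proposition \ref{strong moment} to control the jump contribution (whose rate is $\mu + y_1$), disposes of the positive one. A secondary, easier point is that the three coefficient inequalities above are coupled and must be resolved in the hierarchical order indicated; this is precisely what the assumption $\alpha/\beta < 1$ makes possible.
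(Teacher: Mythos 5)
Your proposal is correct and follows essentially the same route as the paper: the generator $\mathscr{B}$ you derive is exactly the operator $\mathscr{A}_X$ of Lemma \ref{X generator}, your exponential Lyapunov function and drift inequality $\mathscr{B}V \le -KV + L$ are precisely that lemma's content, and the conclusion via localization by the jump times mirrors the paper's reduction to the argument of Proposition \ref{strong moment}. The only cosmetic difference is the final limit passage, where the paper works with $g_X(y,t)=e^{My}e^{K_1 t}$ so that one application of Fatou's lemma suffices, while you apply Gronwall to $\varphi(t)=E\left[V(X^x_t)\right]$, which requires the a priori finiteness care you correctly flag.
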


Finally, similarly to Proposition 4.5 in \cite{ClinetYoshida}, it is ensured that $X_t^x$ has the geometric ergodicity in the following meaning.

\begin{proposition*}
\label{geometric ergodicity}
There exist a positive constant vector $M = (M_1, M_2, M_3)$ and positive constants $B > 0$ and $r \in (0, 1)$ such that
\begin{eqnarray*}
\left\| P^t(x, \cdot) - P^{\bar{X}} \right\|_{e^{M\cdot}} \le B(e^{Mx} + 1)r^t.
\end{eqnarray*}
Here, for a measurable function $V\ge1$, $\|\cdot\|_V$ designates the $V$-variation norm, i.e. for any signed measure $\mu$ on a measurable space
$(S, \mathscr{S})$,
\begin{eqnarray*}
\| \mu \|_V = \sup_{\psi; |\psi| \le V} \left| \int_S \psi(x) \mu(dx)\right|.
\end{eqnarray*}
\end{proposition*}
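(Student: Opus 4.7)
The plan is to invoke the standard Foster--Lyapunov criterion for $V$-uniform exponential ergodicity of continuous-time Markov processes (Down--Meyn--Tweedie, \emph{Ann.\ Probab.}\ 1995): for a time-homogeneous Markov process with $V\ge 1$, a drift inequality $\mathscr{A}_X V \le -cV + d\,1_C$ with $C$ petite, together with $\psi$-irreducibility and the property that compact sets are petite, yields $\|P^t(x,\cdot) - P^{\bar X}\|_V \le B\, V(x)\, r^t$ for some $B>0$, $r\in(0,1)$. The Markov and time-homogeneous structure is already provided by Propositions \ref{X Markovian property} and \ref{X homo property}, so I only need to check the drift, irreducibility, and petite-set conditions.

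For the drift, I would proceed as in Lemma \ref{generator} but in three dimensions. Between jumps of $N^{x_1}$ the coordinates of $X^x$ obey $\dot y_1=-\beta y_1$, $\dot y_2=y_1-\beta y_2$, $\dot y_3=2y_2-\beta y_3$ (immediate from differentiating the definition), while at each jump (which has intensity $\mu+y_1$) only $y_1$ jumps, by $\alpha$. A martingale argument parallel to Lemma \ref{P in DomA} identifies the extended generator as
\begin{eqnarray*}
\mathscr{A}_X f(y) = (\mu+y_1)\bigl(f(y+\alpha e_1)-f(y)\bigr) - \beta y_1\,\partial_1 f(y) + (y_1-\beta y_2)\,\partial_2 f(y) + (2y_2-\beta y_3)\,\partial_3 f(y)
\end{eqnarray*}
for smooth enough $f$. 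Applied to $V(y)=\exp(M_1 y_1+M_2 y_2+M_3 y_3)$ this gives
\begin{eqnarray*}
\frac{\mathscr{A}_X V(y)}{V(y)} = \mu(e^{M_1\alpha}-1) + \bigl(e^{M_1\alpha}-1-\beta M_1 + M_2\bigr)y_1 + (-\beta M_2 + 2M_3)y_2 - \beta M_3 y_3.
\end{eqnarray*}
Since $\alpha/\beta<1$, I can pick $M_1>0$ small enough that $e^{M_1\alpha}-1<\beta M_1$, then $M_2\in(0,\,\beta M_1-(e^{M_1\alpha}-1))$, and finally $M_3\in(0,\beta M_2/2)$, making all three linear coefficients strictly negative. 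Consequently $\mathscr{A}_X V \le -K_1 V + K_2$, which is the drift condition with $C$ a compact sublevel set of $V$.

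For the remaining two conditions, the key observation is that from any starting $x$ the process can, with positive probability, spend an arbitrarily long time with no jumps (the survival probability $\exp(-\int_0^T \lambda_s^{x_1}\,ds)$ is a.s.\ positive by Remark \ref{finite path}), during which the deterministic flow above drives $X$ exponentially toward $(0,0,0)$. Conditional on exactly $n$ jumps in a subsequent short window, their joint law is absolutely continuous with density bounded below by a multiple of $\mu^n$, and at $n=3$ a direct Jacobian computation shows that the map from the three jump times to $(X_t^{x,(1)},X_t^{x,(2)},X_t^{x,(3)})$ at a later time $t$ is a local diffeomorphism on an open set. This isolates an absolutely continuous component of $P^t(x,\cdot)$ on an open ball $U\subset\reels_+^3$, with a minorisation that is uniform for $x$ in any compact set; that yields simultaneously $\psi$-irreducibility (with $\psi$ the restriction of Lebesgue measure to $U$) and the petite-set property for compact sets, and aperiodicity is automatic because the waiting-time distribution is absolutely continuous.

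The main obstacle is precisely this irreducibility/petite analysis: because the first-jump survival component of the kernel is singular (a Dirac mass concentrated on the deterministic no-jump trajectory), one cannot compare $P^t(x,\cdot)$ directly to Lebesgue measure and must isolate a sufficiently rich absolutely continuous piece, which requires the decomposition on the number of jumps and the Jacobian argument at $n=3$. Once these ingredients are assembled, the Down--Meyn--Tweedie theorem delivers $\|P^t(x,\cdot) - P^{\bar X}\|_V \le c\,V(x)\,r^t$ with $V(x)=e^{Mx}$, and the stated factor $(e^{Mx}+1)$ is then a trivial upper bound for $V(x)$; finiteness of $P^{\bar X}$-mass of $V$ on the other side of the norm comes from Proposition \ref{X strong moment} applied to $\bar X$.
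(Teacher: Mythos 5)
Your proposal is correct and follows essentially the same route as the paper: the drift inequality $\mathscr{A}_X V \le -K_1 V + K_2$ for $V(y)=e^{My}$ via the extended generator (the paper's Lemma 6.14), the identification of an absolutely continuous component of the kernel by conditioning on exactly three jumps and a (Vandermonde) Jacobian argument to get the irreducible $T$-chain/petite-compact-set property, and reachability of the origin through the no-jump event, all feeding into the Meyn--Tweedie continuous-time Foster--Lyapunov theorem. The only cosmetic difference is the citation (the paper invokes Theorem 6.1 of Meyn--Tweedie III rather than Down--Meyn--Tweedie), which does not change the substance.
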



\section{Edgeworth expansion for functionals related the Hawkes process}

As mentioned in Introduction, both computation of the maximum likelihood estimator (MLE)
and simulation methods for the one-dimensional exponential Hawkes process was revealed in \cite{OgataMLE}. Furthermore, it was proved that the quasi maximum likelihood estimator (QMLE) of the multi-dimensional exponential Hawkes process has the asymptotic normality and the convergence of moments, see \cite{ClinetYoshida}. In this section, we apply Theorem \ref{Main Thm2 New} to the MLE of the one-dimensional exponential Hawkes process whose intensity starts from $\mu_0$ and we will give the second order asymptotic expansion for the distribution of the MLE. Proofs of each statement are given in Appendix. First, we prepare the necessary notation and establish the conditions.\\

Let $(\Omega, \mathscr{F}, P)$ be a probability space. As in Definition \ref{hawkes def}, let $N_t$ be an exponential Hawkes process with the $\mathscr{F}_t = \sigma(N_s; s\le t)$-predictable intensity 
\begin{eqnarray*}
\lambda_t = \mu_0 + \int_{(0,t)}\alpha_0 e^{-\beta_0(t-s)}dN_s.
\end{eqnarray*}
Moreover, we define parametrized intensity process by 
\begin{eqnarray*}
\lambda_t(\theta) = \mu + \int_{(0,t)}\alpha e^{-\beta(t-s)}dN_s \quad \text{for $\theta = (\mu, \alpha, \beta)$.}
\end{eqnarray*}
Furthermore, we define $X_t(\theta)$ by referring (\ref{X}):
\begin{eqnarray*}
X_t(\theta) 
&=& \Bigg( \int_{(0,t)} \alpha e^{-\beta(t-u)}dN_u, \quad  \int_{(0,t)} \alpha (t-u)e^{-\beta(t-u)}dN_u, \quad \int_{(0,t)} \alpha (t-u)^2e^{-\beta(t-u)}dN_u \Bigg)'
\end{eqnarray*}
for $\theta = (\mu, \alpha, \beta)$.
We consider a relatively compact and open parameter set $\Theta \subset \reels_+^3$. Assume that $\theta_0 = (\mu_0, \alpha_0 , \beta_0 ) \in \Theta$ is the true parameter. If there is no confusion, we often omit the true parameter, i.e. write $\lambda_t(\theta_0) = \lambda_t$, $X_t(\theta_0) = X_t$ and so on. The log-likelihood process of $\lambda_t(\theta)$ is defined by
\begin{eqnarray*}
l_T(\theta) = \int^T_0 \log\left(\lambda_s(\theta)\right) dN_s - \int^T_0 \lambda_s(\theta) ds
\end{eqnarray*}
for $\theta \in \Theta$.  Since Lemma A.5 in \cite{ClinetYoshida} guarantees a verification of the permutation of the symbols $\partial_{\theta}$ and $\int_0^T$, the derivative of the log-likelihood process with respect to their parameter can be calculated as below.
\begin{eqnarray}
\label{1st deriv}
\partial_{\theta} l_T(\theta) |_{\theta=\theta_0}
= \int^T_0  \frac{\partial_{\theta}\lambda_s}{\lambda_s}d\tilde{N}_s,
\end{eqnarray}
where $\tilde{N}_t = N_t - \int_0^t \lambda_s ds$. Moreover, 
\begin{eqnarray*}
\partial_{\theta}^2 l_T(\theta) |_{\theta=\theta_0}
=  \int^T_0  \frac{\partial_{\theta}^2\lambda_s-\left(\partial_{\theta}\lambda_s\right)^{\otimes 2}}{\lambda_s^2}d\tilde{N}_s
- \int^T_0  \frac{\left(\partial_{\theta}\lambda_s\right)^{\otimes 2}}{\lambda_s}ds,
\end{eqnarray*}
where, for a vector $x \in \reels^k$, $x^{\otimes 2}$ stands for the product of $x$ and its transpose, i.e. $x^{\otimes 2} = xx' \in \reels^{k \times k}$. Let the operator $\mathbb{D}$ be as in (\ref{D}), for example, $\mathbb{D}^2= \frac{\partial}{\partial\alpha}$, $\mathbb{D}^{(1,2)} = \frac{\partial^2}{\partial\mu \partial\alpha}$, etc. Note that, when we write $X_t = (X^{(1)}_t, X^{(2)}_t, X^{(3)}_t)$, then $\partial_{\theta}\lambda_s$ and $\partial^2_{\theta}\lambda_s$ are computed as
\begin{eqnarray*}
\partial_{\theta}\lambda_s
=\left(
    \begin{array}{c}
      \mathbb{D}^{1}\lambda_s \\
      \mathbb{D}^{2}\lambda_s \\
      \mathbb{D}^{3}\lambda_s 
    \end{array}
  \right)
=\left(
    \begin{array}{c}
      1 \\
      \alpha_0^{-1}X^{(1)}_t\\
      -X^{(2)}_t
    \end{array}
  \right)
\quad \text{and} \quad 
\partial_{\theta}^2\lambda_s
=\left( \mathbb{D}^{(i,j)}\lambda_s  \right)_{i,j=1,2,3}
=\left(
    \begin{array}{ccc}
      0 & 0 & 0\\
      0 & 0 & -\alpha_0^{-1}X^{(2)}_t\\
      0 & -\alpha_0^{-1}X^{(2)}_t  & X^{(3)}_t
    \end{array}
  \right)
\end{eqnarray*}
respectively. Corresponding to Subsection 2.2, we introduce some notation. Let $l_{a_1 \cdots a_k}(\theta) = \mathbb{D}^{(a_1, \dots ,a_k)} l_T(\theta)$ and $\nu_{a_1 \cdots a_k}(\theta) = E\left[\frac{1}{T}l_{a_1 \cdots a_k}(\theta)\right]$ for integers $a_1, \dots ,a_k$. We have the following representations:
\begin{eqnarray*}
\partial_{\theta} l_T 
=\left(
     l_1, l_2, l_3
  \right)'
=\left(
      \int^T_0  \frac{1}{\lambda_s}d\tilde{N}_s, \quad
      \int^T_0  \frac{X^{(1)}_s}{\alpha_0 \lambda_s}d\tilde{N}_s, \quad
      -\int^T_0  \frac{X^{(2)}_s}{\lambda_s}d\tilde{N}_s
  \right)'
\end{eqnarray*}
and $\partial_{\theta}^2 l_T = \left(l_{ij} \right)_{i,j=1, 2, 3}$, where $l_{ij}$ is symmetric with respect to $i, j$ and each component has the following representation:
\begin{eqnarray*}
&&l_{11}  = -\left( \int^T_0  \frac{1}{\lambda_s^2}d\tilde{N}_s + \int^T_0  \frac{1}{\lambda_s}ds \right), \quad 
l_{12}  = -\frac{1}{\alpha_0} \left( \int^T_0  \frac{X^{(1)}_s}{\lambda_s^2}d\tilde{N}_s + \int^T_0  \frac{X^{(1)}_s}{\lambda_s}ds \right),  \\
&&l_{13}  = \int^T_0  \frac{X^{(2)}_s}{\lambda_s^2}d\tilde{N}_s + \int^T_0  \frac{X^{(2)}_s}{\lambda_s}ds, \quad 
l_{22}  = -\frac{1}{\alpha_0^2} \left( \int^T_0  \frac{\big(X^{(1)}_s\big)^2}{ \lambda_s^2}d\tilde{N}_s + \int^T_0  \frac{\big(X^{(1)}_s\big)^2}{ \lambda_s}ds \right),  \\
&&l_{23} = \frac{1}{\alpha_0} \left( \int^T_0  \frac{\big(X^{(1)}_s - 1\big)X^{(2)}_s}{\lambda_s^2}d\tilde{N}_s + \int^T_0  \frac{X^{(1)}_sX^{(2)}_s}{ \lambda_s}ds \right), \  \text{and }  \
l_{33} =  \int^T_0  \frac{X^{(3)}_s - \big(X^{(2)}_s\big)^2}{\lambda_s^2}d\tilde{N}_s - \int^T_0  \frac{\big(X^{(2)}_s\big)^2}{\lambda_s}ds.
\end{eqnarray*}

Put $\mathscr{B}_I = \bigcap_{\ep>0} \sigma(X_u ; u \in [s, t+\ep]) \vee \mathscr{N}$ for $I = [s, t] \subset \reels_+$, where $\mathscr{N}$ is the $\sigma$-field generated by null sets in $\mathscr{F}$. Let 
\begin{eqnarray*}
Z_a = \frac{1}{\sqrt{T}}l_a
\quad \text{and} \quad
Z_{ab} =\sqrt{T}\left(\frac{1}{T}l_{ab} - \nu_{ab}\right).
\end{eqnarray*}
Moreover, we write
\begin{eqnarray*}
Z_T^{(1)} = T^{\frac{1}{2}}(Z_1, Z_2, Z_3) \quad \text{and} \quad Z_T^{(2)} =  T^{\frac{1}{2}}(Z_{11}, Z_{12}, Z_{13}, Z_{21},  Z_{22}, Z_{23}, Z_{31}, Z_{32}, Z_{33}).
\end{eqnarray*}
Finally, we set $Z_T = (Z_T^{(1)}, Z_T^{(2)})$. Then, $Z_t - Z_s$ is $\mathscr{B}_{[s,t]}$-measurable for every $s, t \in \reels_+$, $0 \le s \le t$ and $Z_0 \in \mathscr{F}\mathscr{B}_{[0]}$. About the definition of $\mathscr{B}_I $, note the following points.

\begin{remark*}
{\rm Obviously, $\sigma\big( X^{(1)}_s; s \in I \big) = \sigma( \lambda_s; s \in I )$ holds for any interval $I$.}
\end{remark*}

\begin{remark*}
{\rm For any $s\ge0$, $\sigma( X_s; s \in [0,t] ) \subset \sigma( \lambda_s; s \in [0,t] )$ holds. However, for a general interval $I$ and $s\in I$, $X^{(2)}_s$ and $X^{(3)}_s$ are not always measurable with respect to $\sigma( \lambda_t; t \in I)$. Thus, if we consider the expansion of the distribution of $Z_T$, we have to extend $\sigma( \lambda_t; t \in I)$. In this reason, we introduced the process $X_t$ and defined $\mathscr{B}_{I}$ as above.}
\end{remark*}

\begin{remark*}
{\rm $\sigma( X_t; t \in [u, v])$, in particular $\sigma\big( X^{(1)}_t; t \in [u, v]\big)$, has almost all the information of $\sigma(N_t -N_s ; s, t, \in [u, v])$. However, the information of the jump at $v$ is not contained in $\sigma( X_t; t \in [u, v]))$. Therefore, we need to consider the right-continuous $\sigma$-fields.}
\end{remark*}

A functional of the process $X_t$ has the geometric mixing property.

\begin{proposition*}
\label{A1}
$\mathscr{B}_I$ satisfies the condition \textnormal{[A1]}.
\end{proposition*}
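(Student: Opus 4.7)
The plan is to combine four ingredients already established in Section 3: the Markov property of $X^x_t$ (Proposition \ref{X Markovian property}), time-homogeneity of its transition kernel (Proposition \ref{X homo property}), the uniform moment bound of Proposition \ref{X strong moment}, and geometric ergodicity (Proposition \ref{geometric ergodicity}). Since all four are available, the task reduces to assembling them into the claimed mixing inequality.

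Fix $s \le t$ and bounded $f \in \mathscr{F}\mathscr{B}_{[0,s]}$, $g \in \mathscr{F}\mathscr{B}_{[t,\infty)}$ with $\|f\|_\infty, \|g\|_\infty \le 1$. First I would reduce $g$ by a monotone class argument to a cylinder functional $\phi(X_{t_1},\dots,X_{t_n})$ with $t \le t_1 \le \cdots \le t_n$ and $|\phi| \le 1$; iterating the Markov property and using time-homogeneity then gives
\begin{eqnarray*}
E\bigl[g \bigm| \mathscr{B}_{[0,s]}\bigr] = h(X_s), \qquad h(z) = \int \cdots \int \phi(y_1,\dots,y_n)\, P^{t_1-s}(z,dy_1) \cdots P^{t_n - t_{n-1}}(y_{n-1}, dy_n),
\end{eqnarray*}
so in particular $\|h\|_\infty \le 1$.

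Next I would exploit geometric ergodicity. Let $\pi = P^{\bar X}$ be the invariant measure from Proposition \ref{inv meas}. Applying the invariance relation $\pi = \pi P^u$ inside the definition of $h$ shows that $c := \int h\,d\pi$ coincides with $E[\phi(\bar X_{t_1-s}, \ldots, \bar X_{t_n-s})]$ under the stationary process, hence is a constant independent of the past. Setting $\tilde h = h - c$, the covariance identity yields
\begin{eqnarray*}
\bigl| E[fg] - E[f]E[g] \bigr| = \bigl| E[f\,\tilde h(X_s)] - E[f]\,E[\tilde h(X_s)] \bigr| \le 2\, E\bigl[|\tilde h(X_s)|\bigr].
\end{eqnarray*}
Since $\|h\|_\infty \le 1 \le e^{M\cdot}$, the $V$-variation norm estimate of Proposition \ref{geometric ergodicity} gives $|\tilde h(z)| \le B(e^{Mz}+1)r^{t_1-s} \le B(e^{Mz}+1)r^{t-s}$. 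Taking expectation in $X_s$ and applying Proposition \ref{X strong moment} produces $E[|\tilde h(X_s)|] \le C r^{t-s}$ for a constant $C$ independent of $f,g,s,t$. Absorbing the trivial bound $2$ valid for small $t-s$, a single positive constant $a$ such that $a^{-1}e^{-a(t-s)}$ dominates both can be extracted from $-\log r$ and $C$, establishing [A1].

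The main obstacle will be the rigorous identification of $E[g \mid \mathscr{B}_{[0,s]}]$ with $h(X_s)$: Proposition \ref{X Markovian property} is stated only for a single bounded $f(X_t)$ and for the plain natural filtration, whereas here $g$ is a multi-time functional and the conditioning is against the right-continuously augmented $\mathscr{B}_{[0,s]} = \bigcap_{\epsilon > 0} \sigma(X_u; u \in [0, s+\epsilon]) \vee \mathscr{N}$. Upgrading the Markov property to bounded measurable multi-time functionals proceeds by a monotone class argument applied first to cylinder functions, and the passage from $\mathscr{F}^X_s$ to $\mathscr{B}_{[0,s]}$ uses càdlàg regularity of $X$ together with dominated convergence as $\epsilon \downarrow 0$ applied to the identity $E[1_A g] = E[1_A h_\epsilon(X_{s+\epsilon})]$ for $A \in \sigma(X_u; u \le s+\epsilon)$. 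Both steps are standard but should be written out explicitly in the appendix.
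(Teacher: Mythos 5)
Your proposal is correct and follows essentially the same route as the paper: reduce $E[g\mid\mathscr{B}_{[0,s]}]$ to a bounded function of $X_s$ via the Markov and time-homogeneity propositions, then conclude with the geometric ergodicity bound of Proposition \ref{geometric ergodicity} combined with the uniform exponential moment of $X_s$ from Proposition \ref{X strong moment}. The only cosmetic difference is that you center $h$ at its invariant mean (so the constant cancels in the covariance and ergodicity is invoked once), whereas the paper compares both $E[g\mid X_s]$ and $E[g]$ to $P^{\bar{X}}$ and invokes the ergodicity bound twice; the multi-time measurability and augmentation issues you flag are present, and left implicit, in the paper's argument as well.
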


From Proposition \ref{A1} and Theorem \ref{Main Thm}, we immediately obtain the asymptotic expansion for the distribution of a functional of the Hawkes core process.

\begin{theorem*}
\label{Main Thm3}
Let $p \in \naturels$ with $p \ge 2$  and $L_1, L_2 > 0$. Assume that a $\mathscr{B}_I$-adapted stochastic process $Y_T$ satisfies the condition \textnormal{[A2]}.  Then, there exist  $D>0$ and $\Gamma \in \naturels$ such that for any $f \in \mathscr{E}(\Gamma, L_1, L_2)$,
\begin{eqnarray*}
\left| E\left[ f\left(\frac{Y_T}{\sqrt{T}}\right)\right] - \int_{\reels^d}f(z) p_{T,p,D}(z)dz \right| = o\left(T^{-(p-2)/2}\right),
\end{eqnarray*}
where $p_{T,p,D}(z)$ is defined as in (\ref{p}) with replaced $Z_T$ by $Y_T$.
\end{theorem*}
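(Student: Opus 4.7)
The strategy is to reduce Theorem \ref{Main Thm3} immediately to Theorem \ref{Main Thm}. The abstract expansion theorem of Section 2 is formulated so that its conclusion depends only on two structural ingredients on the pair (process, $\sigma$-field system): the geometric mixing property \textnormal{[A1]} of the $\sigma$-fields $\{\mathscr{B}_I\}$, and the moment property \textnormal{[A2]} of the $\mathscr{B}_I$-adapted process whose normalization is being expanded. Both ingredients are already in place. The mixing condition for the specific family $\mathscr{B}_I$ arising from the Hawkes core process has just been established in Proposition \ref{A1}, and the moment condition on $Y_T$ is exactly the hypothesis imposed in the statement of Theorem \ref{Main Thm3}. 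Consequently, no new ergodic or moment analysis is needed, and the proof should occupy only a few lines.

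Concretely, I would set the abstract ``process'' $Z$ of Theorem \ref{Main Thm} equal to the given $Y$, noting that the adaptedness requirements $Z_I = Y_t - Y_s \in \mathscr{F}\mathscr{B}_I$ for $I = [s,t]$ and $Z_0 \in \mathscr{F}\mathscr{B}_{\{0\}}$ are direct from the $\mathscr{B}_I$-adaptedness assumed on $Y_T$. Applying Proposition \ref{A1} supplies \textnormal{[A1]}, and \textnormal{[A2]} is given by hypothesis. Therefore Theorem \ref{Main Thm} applies verbatim and yields constants $D > 0$ and $\Gamma \in \naturels$ such that
\begin{eqnarray*}
\left| E\!\left[ f\!\left(\frac{Y_T}{\sqrt{T}}\right)\right] - \int_{\reels^d} f(z)\, p_{T,p,D}(z)\, dz \right| = o\!\left(T^{-(p-2)/2}\right)
\end{eqnarray*}
for every $f \in \mathscr{E}(\Gamma, L_1, L_2)$, where $p_{T,p,D}$ is built from formula (\ref{p}) with the cumulants of $Y_T/\sqrt{T}$ in place of those of $Z_T/\sqrt{T}$.

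The main conceptual point — and the ``obstacle'' in an honest sense — is not in this theorem itself but in the fact that Theorem \ref{Main Thm} was deliberately formulated in a process-agnostic way, so that once the mixing structure of $\{\mathscr{B}_I\}$ is pinned down (as in Proposition \ref{A1}, which in turn rests on the Markov property of $X^x_t$ in Proposition \ref{X Markovian property} and the geometric ergodicity of Proposition \ref{geometric ergodicity}), the expansion is available for any $\mathscr{B}_I$-adapted increment process with the required moments. The real work in Section 4 will then be in verifying \textnormal{[A2]} for the specific functionals of the Hawkes core process arising from the score and its derivatives; that verification, not the present theorem, is where the analytic effort lies.
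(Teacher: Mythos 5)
Your proposal matches the paper exactly: Theorem \ref{Main Thm3} is stated there as an immediate consequence of Proposition \ref{A1} (which supplies [A1] for the $\sigma$-fields $\mathscr{B}_I$) together with Theorem \ref{Main Thm}, with [A2] taken as hypothesis on $Y_T$. Your reduction, including the observation that the real analytic work is deferred to verifying [A2] for the specific log-likelihood functionals, is precisely how the paper proceeds.
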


We may also apply Theorem \ref{Main Thm2 New}. Write $g_T = (g_{ab})_{a,b=1, 2, 3} = (-\nu_{ab}(\theta_0))_{a,b=1, 2, 3}$. As proved in Appendix, the exponential Hawkes process satisfies the condition [A3]. Thus, we can also define $g_T^{-1} = (g^{ab;})_{a, b=1,2,3}$. The following statement is the main theorem of this article. (For the definition of each symbol, see Section 2.)

\begin{theorem*}
\label{Main Thm4}
Let $L_1, L_2>0$. The conditions \textnormal{[A1]-[A3]} and \textnormal{[C1]} are satisfied. Moreover, there exist $L > 1$, $\gamma \in (0,1)$ and $q_1, q_2, q_3 > 0$ such that \textnormal{[B0]-[B4]} with (\ref{restriction}) hold. Thus, there exists $\Gamma \in \naturels$ such that for any $f \in \mathscr{E}(\Gamma, L_1, L_2)$,
\begin{eqnarray*}
\left| E\left[ f\big(\sqrt{T}( \mle - \theta_0 )\big)\right] - \int_{\reels^d}f(z^{(1)}) q_{T,3}(z^{(1)})dz^{(1)} \right| = o\left(T^{-\frac{1}{2}}\right),
\end{eqnarray*}
where 
\begin{eqnarray*}
q_{T,3}(z^{(1)})
&=& \phi(z^{(1)}; g_T^{-1}) + \frac{1}{\sqrt{T}}\Bigg\{ \left(\frac{1}{6}\tilde{\kappa}^{a_1a_2a_3;}_T + \mu^{a_3;}_{b_1b_2}g^{b_1a_1;}g^{b_2a_2;}\right)h_{a_1a_2a_3}(z^{(1)}; g_T^{-1}) \\
&&+ \mu^{a_1;}_{b_1b_2}g^{b_1b_2;}h_{a_1}(z^{(1)}; g_T^{-1}) \Bigg\}\phi(z^{(1)}; g_T^{-1}),
\end{eqnarray*}
and
\begin{itemize}
\item $\tilde{\lambda}^{a_1a_2a_3;}_T$ is the $(a_1, a_2, a_3)$-cumulant of $g_T^{-1}Z_T^{(1)}$ and $\tilde{\kappa}^{a_1a_2a_3;}_T = T^{1/2}\tilde{\lambda}^{a_1a_2a_3;}_T$;
\item $V^{a;}_{bc} = Cov[Z^{a;}_{b}, Z^{a_1;}]g_{a_1c}$ and $\mu^{a;}_{a_1a_2} = \big( V^{a;}_{a_1a_2} + V^{a;}_{a_2a_1} + \nu^{a;}_{a_1a_2} \big)/2$.
\end{itemize}

\end{theorem*}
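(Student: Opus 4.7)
The plan is to derive the expansion by combining the stochastic expansion (\ref{ortho form of MLE}) with Propositions \ref{tildeS_T} and \ref{evaluateR_2}, following the scheme behind Theorem \ref{Main Thm2 New}. Writing $\tilde{S}_T = \tilde{Z}_T^{(1)} + T^{-1/2}\tilde{Q}_1(\tilde{Z}_T^{(1)}, \tilde{Z}_T^{(2)})$, one has $\sqrt{T}(\hat{\theta}_T - \theta_0) = \tilde{S}_T + T^{-1}\check{R}_2$ on $\Omega_T$, so the task splits into: (a) producing the Edgeworth expansion for $\tilde{S}_T$, (b) transferring that expansion to $\sqrt{T}(\hat{\theta}_T - \theta_0)$ by controlling the remainder, and (c) simplifying $\tilde{q}_{T,3,D}$ to $q_{T,3}$.

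For (a), invoke Proposition \ref{tildeS_T}: under [A1]-[A3] and [B0], for suitable $D > 0$ and $\Gamma$, the bound $|E[f(\tilde{S}_T)] - \int f(z^{(1)})\tilde{q}_{T,3,D}(z^{(1)})\,dz^{(1)}| = o(T^{-1/2})$ holds for $f \in \mathscr{E}(\Gamma, L_1, L_2)$. For (b), replace $f(\sqrt{T}(\hat{\theta}_T - \theta_0))$ by $f(\tilde{S}_T)$: on $\Omega_T$, a first-order Taylor expansion of $f$ combined with Proposition \ref{evaluateR_2} (giving $T^{-1}|\check{R}_2^{a;}| \le CT^{-(1+\epsilon')/2}$ outside a set of probability $o(T^{-L/2})$) bounds the difference by $o(T^{-1/2})$ via polynomial growth of $\nabla f$, [C1], and H\"older's inequality; on $\Omega \setminus \Omega_T$ together with the exceptional set of Proposition \ref{evaluateR_2}, polynomial growth of $f$, [C1], and the $o(T^{-L/2})$ probability bound — with $L$ chosen large enough relative to $L_1$, as permitted by (\ref{restriction}) — absorb the contribution into $o(T^{-1/2})$.

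For (c), use [A3] to pass from $\tilde{q}_{T,3,D}$ to $q_{T,3}$. For large $T$, $g_T^{-1}$ exists, and $\tilde{g}_T^{-1} = g_T^{-1} + T^{-D}(g_T^{-1})^2$ gives $\tilde{g}_T = g_T + O(T^{-D})$; correspondingly $\tilde{\mu}$, $\tilde{V}$ differ from $\mu$, $V$ by $O(T^{-D})$, and $\phi(\cdot;\tilde{g}_T^{-1})$ differs from $\phi(\cdot;g_T^{-1})$ by $O(T^{-D})$ with Gaussian tails. Since $D$ is an existential parameter of Proposition \ref{tildeS_T}, one may take $D > 1/2$ at the outset; then each substitution contributes only $o(T^{-1/2})$ after integration against $f$, converting $\tilde{q}_{T,3,D}$ into the stated $q_{T,3}$.

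The hardest part is the remainder-handling in (b): verifying that the difference $f(\sqrt{T}(\hat{\theta}_T - \theta_0)) - f(\tilde{S}_T)$ integrates to $o(T^{-1/2})$ rather than merely $O(T^{-1/2})$. This requires the strictly positive $\epsilon'$ in Proposition \ref{evaluateR_2}, the $o(T^{-L/2})$ probability bound for the exceptional set, and consistent moment control across $\Omega_T$ and $\Omega\setminus\Omega_T$ via [C1]; it is precisely the interplay of the constants $L, q_1, q_2, q_3, \gamma$ encoded in (\ref{restriction}) that makes all these rates simultaneously compatible and delivers the sharp $o(T^{-1/2})$ conclusion.
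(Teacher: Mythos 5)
Your proposal proves the wrong thing. What you have written is, in essence, a re-derivation of Theorem \ref{Main Thm2 New}: the split into (a) the expansion for $\tilde{S}_T$ via Proposition \ref{tildeS_T}, (b) the remainder transfer via Proposition \ref{evaluateR_2} together with the polynomial growth of $f$ and the condition [C1], and (c) the removal of the regularization parameter $D$ using [A3], is exactly the decomposition $\Delta_1+\Delta_2+\Delta_3+\Delta_4$ already carried out in the paper's proof of that theorem. But Theorem \ref{Main Thm4} is not a conditional statement to be deduced from the conditions: in the context of Section 4, the clauses ``the conditions [A1]--[A3] and [C1] are satisfied'' and ``there exist $L>1$, $\gamma\in(0,1)$ and $q_1,q_2,q_3>0$ such that [B0]--[B4] with (\ref{restriction}) hold'' are themselves assertions about the exponential Hawkes process that must be proved; once they are established, the displayed expansion follows by simply citing Theorem \ref{Main Thm2 New}. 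By treating all of the conditions as hypotheses, you have skipped the entire substance of the theorem, which is why the paper's appendix devotes its Section-4 proofs to condition checking rather than to any repetition of the argument you give.

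Concretely, what is missing is: (i) the geometric mixing property [A1] for the $\sigma$-fields $\mathscr{B}_I$ generated by the Hawkes core process $X_t$, which the paper reduces to the condition [A1$'$] and then derives from the Markov property (Proposition \ref{X Markovian property}), time-homogeneity (Proposition \ref{X homo property}), the exponential moment bound (Proposition \ref{X strong moment}), and the $V$-geometric ergodicity (Proposition \ref{geometric ergodicity}); (ii) the moment condition [A2] for $Z_T$, proved via the Burkholder--Davis--Gundy inequality applied to the martingale parts $\int_t^{t+h} p_1(X_s)\lambda_s^{-2}\,d\tilde{N}_s$; (iii) the conditions [A3] and [C1], imported from the results of Clinet--Yoshida on the Fisher information limit and on convergence of moments of the MLE; and (iv) the verification of [B0]--[B4] with constants genuinely compatible with (\ref{restriction}) --- for instance, [B1] is shown to hold for every $q_1>1$, [B2] requires an ergodic-rate argument showing that $\gamma$ can be placed in the window $\big(\tfrac{2}{3}+\max(\tfrac{L}{q_2},\tfrac{L}{3q_3}),\,1-\tfrac{L}{q_1}\big)$, and [B4] needs a Sobolev-type inequality to control the supremum over $\Theta$ of the fourth derivatives. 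Without these verifications, nothing guarantees that admissible constants $L,\gamma,q_1,q_2,q_3$ exist for this model, so your step (b), which leans on Proposition \ref{evaluateR_2}, has no foundation, and the conclusion cannot be drawn.
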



\section{Simulation}
In this section, we show the result of a simulation for Theorem \ref{Main Thm4}. We need to compute $g_T^{-1}$, $\tilde{\kappa}^{a_1a_2a_3;}_T$ and $\mu^{a;}_{a_1a_2}$ in Theorem \ref{Main Thm4}. However, it is difficult to get their expressions for the true parameter. Here, we approximate these values numerically using the Monte Carlo method. It must be emphasized that the simulation here is not exact in this sense. Furthermore, when we model real data by the Hawkes process, of course, we do not know the true parameter. One solution to this problem is to use an estimator instead of the true parameter. This is nothing but the bootstrap method. An error evaluations of the bootstrap method are for further study. All experiments are done by using R. The code can be found on GitHub page \url{https://github.com/goda235/Edgeworth_expansion_for_Hawkes_MLE}.

By using the algorithm in \cite{OgataSimu}, we simulate the values of the Hawkes process for $MC$ times. From these data, we can get $MC$ number of values for the Hawkes core process $X_s^{(1)}, X_s^{(2)}$ and $X_s^{(3)}$, in particular, $MC$ values for $Z_T$. From these data, we can get the unbiased estimator of $Var[Z_T]$. With the help of the condition [B0] (iii), we can compute the value of $g_T^{-1}$ from $Var[Z_T]$. Since $\nu_{a_1a_2a_3}$ have the following representation;


\begin{eqnarray*}
&&\nu_{111} =  E\left[ \frac{1}{T}\sum_{i: \tau_i \le T} \frac{2}{\big(\mu_0 + X^{(1)}_{\tau_i}\big)^3} \right], \
\nu_{112} =  E\left[ \frac{1}{T}\sum_{i: \tau_i \le T} \frac{2X^{(1)}_{\tau_i}}{\alpha_0\big(\mu_0 + X^{(1)}_{\tau_i}\big)^3} \right], \ 
\nu_{113} =  E\left[ \frac{1}{T}\sum_{i: \tau_i \le T} -\frac{2X^{(2)}_{\tau_i}}{\big(\mu_0 + X^{(1)}_{\tau_i}\big)^3} \right], \\
&&\nu_{221} =  E\left[ \frac{1}{T}\sum_{i: \tau_i \le T} \frac{2\big(X^{(1)}_{\tau_i}\big)^2}{\alpha_0^2\big(\mu_0 + X^{(1)}_{\tau_i}\big)^3} \right], \ 
\nu_{222} = E\left[ \frac{1}{T}\sum_{i: \tau_i \le T} \frac{2\big(X^{(1)}_{\tau_i}\big)^3}{\alpha_0^3\big(\mu_0 + X^{(1)}_{\tau_i}\big)^3} \right], \
\nu_{223} = E\left[ \frac{1}{T}\sum_{i: \tau_i \le T} \frac{2\mu_0X^{(1)}_{\tau_i}X^{(2)}_{\tau_i}}{\alpha_0^2\big(\mu_0 + X^{(1)}_{\tau_i}\big)^3} \right], \\
&&\nu_{331} =  E\left[ \frac{1}{T}\sum_{i: \tau_i \le T} \frac{2\big(X^{(2)}_{\tau_i}\big)^2 - X^{(3)}_{\tau_i} \big(\mu_0 + X^{(1)}_{\tau_i}\big)}{\big(\mu_0 + X^{(1)}_{\tau_i}\big)^3} \right], \nu_{332} = E\left[ \frac{1}{T}\sum_{i: \tau_i \le T} \frac{-X^{(1)}_{\tau_i}X^{(3)}_{\tau_i}\big(\mu_0 + X^{(1)}_{\tau_i}\big) - 2\mu_0\big(X^{(2)}_{\tau_i}\big)^2}{\alpha_0\big(\mu_0 + X^{(1)}_{\tau_i}\big)^3} \right], \\
&&\nu_{333} = E\left[ \frac{1}{T}\sum_{i: \tau_i \le T} \frac{3X^{(2)}_{\tau_i}X^{(3)}_{\tau_i}\big(\mu_0 + X^{(1)}_{\tau_i}\big) -2\big(X^{(2)}_{\tau_i}\big)^3}{\big(\mu_0 + X^{(1)}_{\tau_i}\big)^3} \right], \
\nu_{123} =  E\left[ \frac{1}{T}\sum_{i: \tau_i \le T} \frac{\mu_0X^{(2)}_{\tau_i} - X^{(1)}_{\tau_i}X^{(2)}_{\tau_i}}{\alpha_0\big(\mu_0 + X^{(1)}_{\tau_i}\big)^3} \right],
\end{eqnarray*}

then we can compute an approximated value of $\mu^{a;}_{a_1a_2}$ by taking mean. From the representation of cumulants by moment, $\tilde{\kappa}^{a_1a_2a_3;}$ is computed from means of $Z_{a_1}Z_{a_2}Z_{a_3}$.

\

We set an exponential Hawkes process $N_t$ with its parameters $\mu = 0.5, \alpha = 1.0$ and $\beta = 1.3$, i.e. its intensity $\lambda_t$ has the representation

\begin{eqnarray*}
\lambda_t = 0.5 + \int_{(0,t)} 1.0 e^{-1.3(t-s)}dN_s.
\end{eqnarray*}

We set an observation time $T = 30$. For this model, we compute the MLE for $3000$ times and obtain histograms for each parameter.  In addition, we add the density function curves for the marginal distribution of $\phi(z; g_T^{-1})$ and the marginal distribution of $q_{T,3}(z)$. Here, $q_{T,3}(z)$ is computed by the above method with $MC=5000$. The curve of $\phi(z; g_T^{-1})$ is described by a broken line, and $q_{T,3}(z)$ is described by a solid line. The simulation results are as follows.

\begin{figure}[H]
	\centering
	\includegraphics[width=14cm]{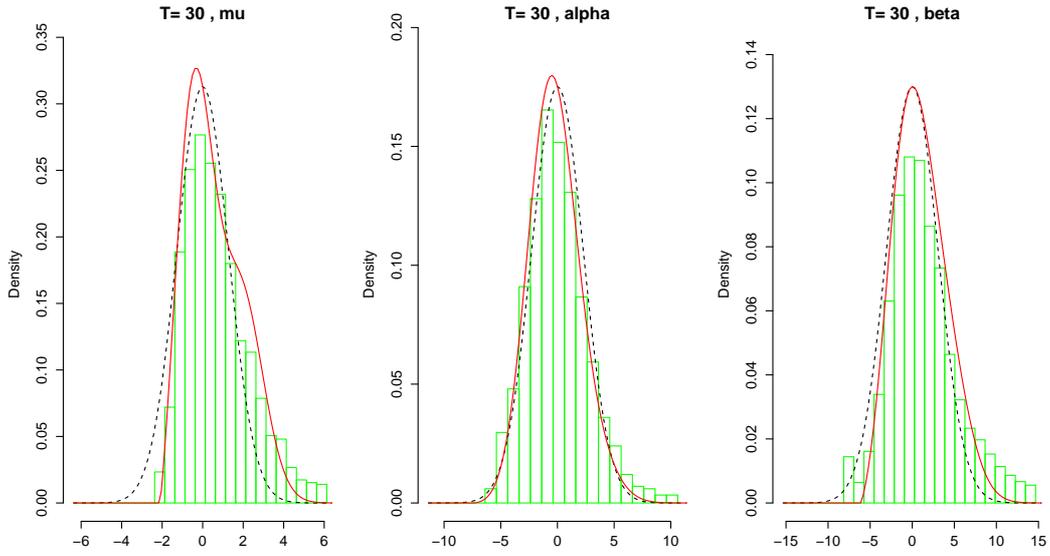}
	\caption{Histogram of MLE in the case of $T = 30$. }
\end{figure}

We can see that the curve of $q_{T,3}(z)$ fits the histogram better than the normal distribution. The next figure is Q-Q plot for each marginal distribution.

\begin{figure}[H]
	\centering
	\includegraphics[width=14cm]{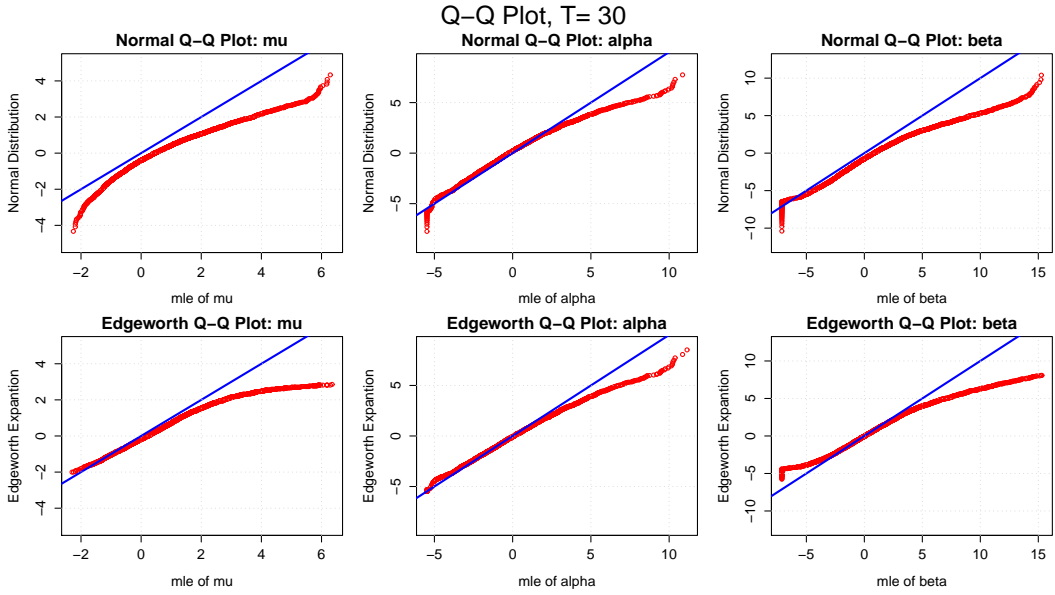}
	\caption{Q-Q plots of each distribution in the case of $T = 30$. }
\end{figure}

The Q-Q plots also shows that $q_{T,3}(z)$ better fits the data than the normal distribution. Change only the observation time to $T = 300$ and try the simulation in the same situation. The result is as follows.

\begin{figure}[H]
	\centering
	\includegraphics[width=14cm]{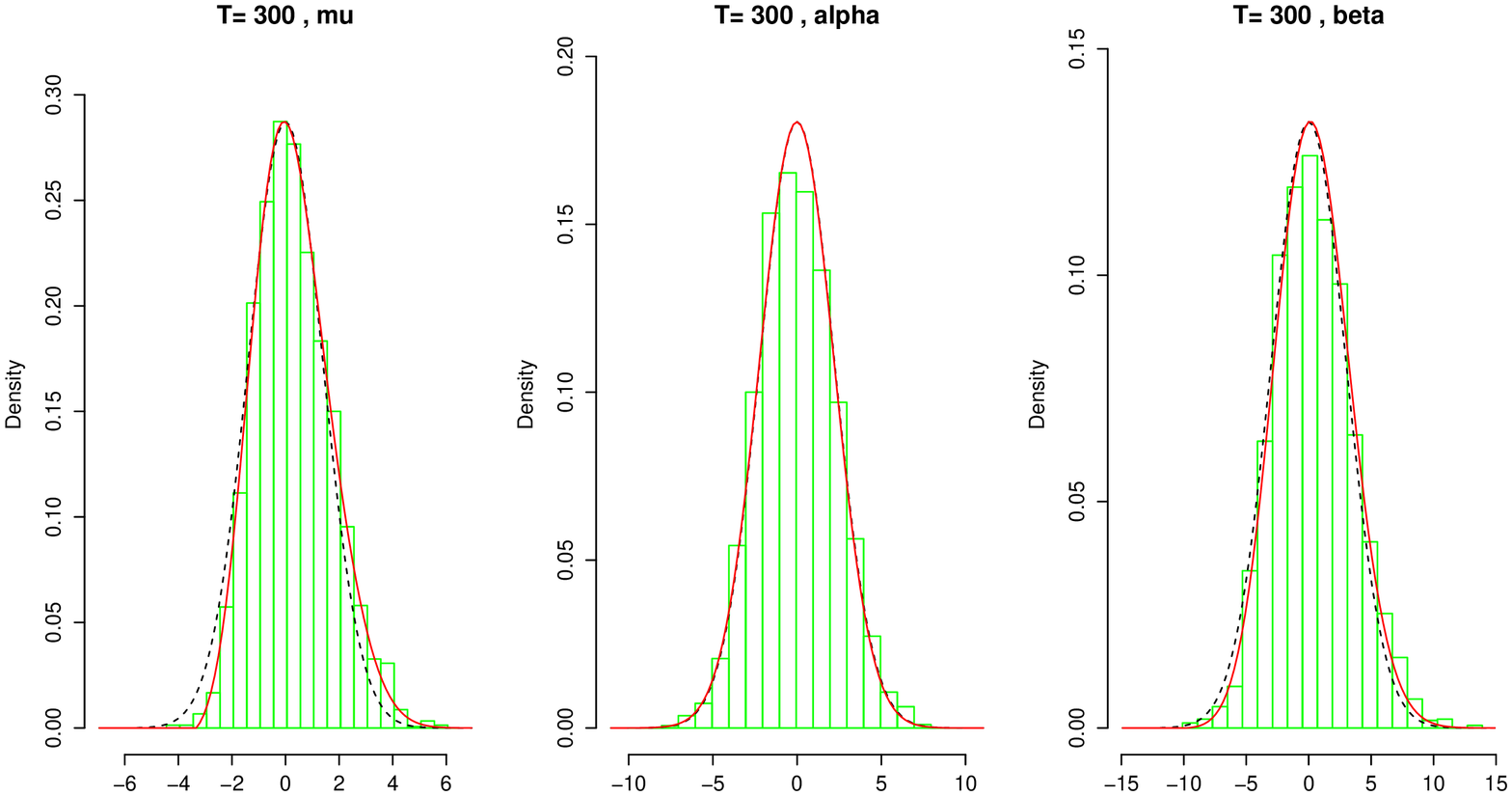}
	\caption{Histogram of MLE in the case of $T = 300$.}
\end{figure}

\begin{figure}[H]
	\centering
	\includegraphics[width=14cm]{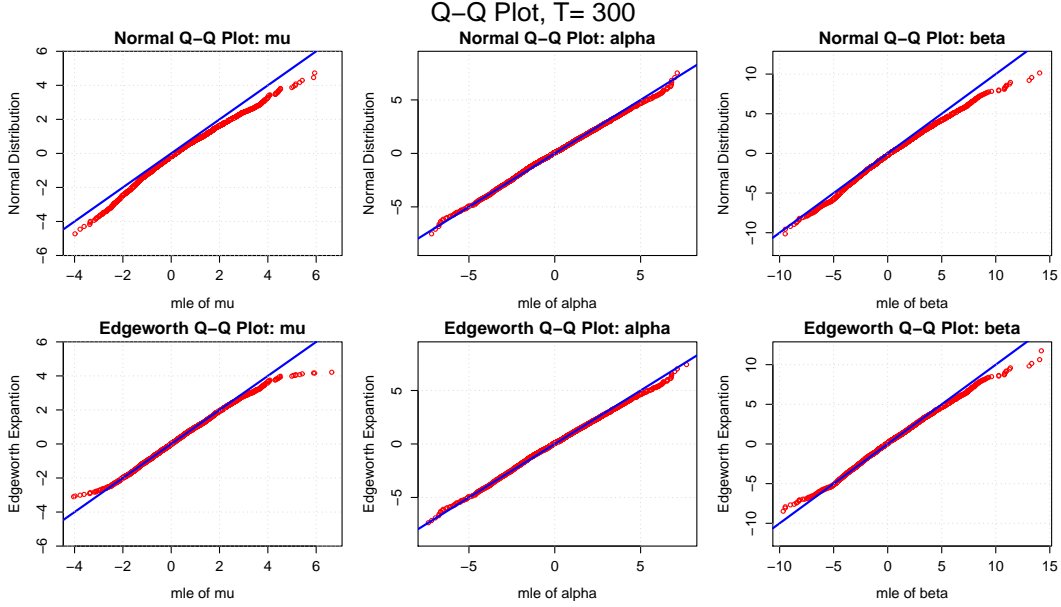}
	\caption{Q-Q plots of each distribution in the case of $T = 300$.}
\end{figure}

When the observation time is sufficiently large, the approximation by $q_{T,3}(z)$ is close to the approximation by normal distribution.


\section{Appendix}
Hereafter, when we write as $X(T) \lesssim T^a$ for $a \in \reels$, it means that there exist positive constants $C$ and $T'$ such that $X(T) \le CT^a$ holds for any $T>T'$.

\subsection{ Proofs of Subsection 2.1}
To prove Theorem \ref{Main Thm}, we should give the asymptotic expansion for the characteristic function of $S_T$. The following discussion is a rework of \cite{GotzeHipp} and \cite{Yoshidaparmix} to a form allowed when the variance is non-degenerate.

First, we introduce some notations. Let $N(T) = \lfloor T \rfloor + 1$ and divide the interval $[0, T]$ into intervals $\{I_i\}_{i= 0, \dots, N(T)}$ such that  $I_0 = [0, 0]$, $I_i = [i-1,i]$ for $i= 1, \dots, N(T) -1$ and $I_{N(T)} = [N(T)-1, T]$. Denote $Z_{I_i}$ as $Z_i$ for any $i = 0, \dots, N(T)$\footnote{We must remark that the notation of $Z_i$ has a different mean in the other section.}. There exists a smooth function $\phi : \reels^d \iku [0,1]$ such that  $\phi(x) = 1$ if $|x| \le 1/2$, and $ \phi(x)=0$ if $|x| \ge 1$. Choose a positive constant  $ \beta \in (0, \frac{1}{2})$ and put $\phi_T(x) = x\phi\big(x/2T^{\beta}\big)$. Then $\phi_T(x) = x$ if $|x| \le T^{\beta}$ and  $\phi_T(x) = 0$ if $|x| \ge 2T^{\beta}$. Let $Z^*_i = \phi_T(Z_{i}) - E[\phi_T(Z_{i})]$ for any $i = 0, \dots, N(T)$ and $S_T^* = T^{-\frac{1}{2}}\sum_{i = 0}^{N(T)} Z^*_i$. Write the characteristic function of $S^*_T$ by $H_T(u) = E[e^{iu'S_T^*}]$ for $u \in \reels^d$. For random variables $X$ and $V$, we define $E[X](V) = E[Xe^{iV}]/E[e^{iV}]$. Define the cumulant of real-valued random variables $X_1, \dots, X_r$ shifted by a random variable $V$ as
\begin{eqnarray*}
\kappa \left[ X_1, \dots, X_r  \right](V) =  \frac{\partial^r}{\partial \ep_1 \cdots \partial \ep_r}\bigg|_{\ep_1 = \cdots = \ep_r = 0} \log \bigg( E\big[\exp\big(i\ep_1X_1 + \cdots + i\ep_rX_r \big)\big](V)\bigg)
\end{eqnarray*}
and write $\kappa \left[ X_1, \dots, X_r \right] = \kappa \left[ X_1, \dots, X_r \right](0)$. In this subsection, we assume that the conditons [A1] and [A2] hold. By using the mixing property, it is possible to evaluate cumulants as follows. Write the $i$-th element of a vector $X$ as $X^{(i)}$.

\begin{proposition*}
\label{cumest_1}
Let $\bar{L} > 0$. Set $r \in \naturels$ with $r \le \bar{L}$ and $a_1, \dots, a_r \in \{1, \dots, d\}$. Then, for any $\ep > 0$, there exists $\delta \in (0,1)$ such that
\begin{eqnarray*}
1_{\left\{ |u| < T^{\delta} \right\}}(u) \left| \kappa \left[ S_T^{* (a_1)}, \dots, S_T^{* (a_r)}  \right] \left( \eta u' S_T^*\right)  \right| \ \lesssim \ T^{-\frac{r-2}{2} + \ep(r-1)} \quad \text{uniformly in $u \in \reels^d$ and $\eta \in [0,1]$}. 
\end{eqnarray*}
\end{proposition*}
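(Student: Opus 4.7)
The plan is to follow the Götze--Hipp style cumulant bound, adapted for shifted cumulants as in \cite{GotzeHipp1978} and \cite{Yoshidaparmix}, exploiting the block structure $\{Z_i^*\}$ and the boundedness $|Z_i^*|\le 4T^{\beta}$ provided by the truncation $\phi_T$.

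First I would expand the cumulant multilinearly. By multilinearity of the shifted cumulant,
\begin{eqnarray*}
\kappa\bigl[S_T^{*(a_1)},\dots,S_T^{*(a_r)}\bigr](\eta u'S_T^*)
=T^{-r/2}\sum_{i_1,\dots,i_r=0}^{N(T)}\kappa\bigl[Z_{i_1}^{*(a_1)},\dots,Z_{i_r}^{*(a_r)}\bigr](\eta u'S_T^*).
\end{eqnarray*}
Fix an ordered multi-index $(i_1,\dots,i_r)$ and sort the indices. The key point is that the truncation gives a deterministic bound $|\phi_T(Z_i)|\le 2T^{\beta}$, hence $|Z_i^*|\le 4T^{\beta}$, and by [A2] all absolute moments $\|Z_i^*\|_{L^p(P)}$ are uniformly bounded in $i$ and $T$.

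Second, I would recall the shifted-cumulant mixing estimate. For a random vector $V$ and $\sigma$-fields $\mathscr{G}_1,\mathscr{G}_2$ separated with mixing coefficient $\alpha$, if $X_1,\dots,X_k\in\mathscr{G}_1$ and $X_{k+1},\dots,X_r\in\mathscr{G}_2$ with $|X_j|\le M$, one has a bound of the form
\begin{eqnarray*}
\bigl|\kappa[X_1,\dots,X_r](V)\bigr|\ \le\ C_r\,M^{r}\,\bigl(\alpha + |E[e^{iV}]|^{-C_r'}\,\alpha\bigr),
\end{eqnarray*}
obtained by writing the cumulant as a sum of joint moments under the shifted probability $dQ=e^{iV}dP/E[e^{iV}]$, splitting each factor via the mixing inequality, and using that $|e^{iV}|=1$. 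This is where the truncation domain $\{|u|<T^{\delta}\}$ enters: for $|u|<T^{\delta}$ with $\delta$ small, a lower bound on $|E[e^{i\eta u'S_T^*}]|$ uniform in $\eta\in[0,1]$ can be obtained from a Taylor expansion of $\log E[e^{i\eta u'S_T^*}]=-\tfrac{1}{2}(\eta u)'\Sigma_T(\eta u)+O(T^{-\delta'}|u|^3)$, keeping the denominator bounded below by, say, $e^{-C|u|^2}$ up to negligible corrections.

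Third, I would combine these ingredients by choosing, for a given ordered tuple $i_1\le\cdots\le i_r$, the maximal gap $g=\max_j(i_{j+1}-i_j)$ and applying the shifted mixing cumulant bound to the split at that gap. With $M=4T^{\beta}$ and the mixing rate $\alpha=a^{-1}e^{-ag}$ from [A1], each summand is bounded by $C_r T^{r\beta}e^{-ag/2}$ times a mild $T^{\delta'}$ factor coming from the shift denominator. The number of ordered index tuples whose maximal gap equals $g$ is $O(T^{r-1})$ for each fixed $g$, and summing the geometric series in $g$ keeps the total exponentially under control. Collecting everything, the sum over indices contributes $O(T^{r-1+r\beta+\delta'})$, and multiplying by the prefactor $T^{-r/2}$ yields a bound of order
\begin{eqnarray*}
T^{-r/2+(r-1)+r\beta+\delta'}=T^{-(r-2)/2-1/2+r\beta+\delta'}.
\end{eqnarray*}
Choosing $\beta$ and $\delta,\delta'$ sufficiently small relative to the given $\epsilon$ (all depending on $\bar L$) yields the required $T^{-(r-2)/2+\epsilon(r-1)}$.

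The hard part will be the second step: obtaining the shifted cumulant mixing bound uniformly in $\eta\in[0,1]$ and in $u$ with $|u|<T^{\delta}$. One needs both (i) a quantitative decoupling under the complex measure $e^{iV}dP$, which is standard but delicate because $V$ itself involves all the $Z_i^*$'s simultaneously, and (ii) a uniform lower bound on $|E[e^{i\eta u'S_T^*}]|$, which forces a careful choice of $\delta$ in terms of $\epsilon,\beta$ and the rate in [A1]. Once these technical ingredients are in place, the index counting and the truncation-induced $T^{r\beta}$ bound are routine.
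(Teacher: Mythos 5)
Your skeleton --- multilinear expansion of the tilted cumulant over the blocks $Z_i^*$, the truncation bound $|Z_i^*|\le 4T^{\beta}$, a split at the maximal gap, and the mixing condition [A1] --- is the standard G\"otze--Hipp route, and it is essentially what the paper invokes by citing Lemma 5 of \cite{Yoshidaparmix}. The gap is in your second step, which you flag as the hard part but then resolve incorrectly. Your key inequality carries the factor $|E[e^{iV}]|^{-C_r'}$ with $V=\eta u'S_T^*$ the \emph{global} tilt, and you propose to control it by $|E[e^{iV}]|\gtrsim e^{-C|u|^2}$. On the range $|u|<T^{\delta}$ this produces a factor of order $e^{CC_r'T^{2\delta}}$, which is super-polynomial in $T$ for every $\delta>0$; it is not a ``mild $T^{\delta'}$ factor'' and no choice of $\delta,\beta$ absorbs it into $T^{\epsilon(r-1)}$. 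Note that even the trivial bound on a single tilted moment, $|E[X_{j_1}\cdots X_{j_k}](V)|\le M^{k}/|E[e^{iV}]|$, already suffers from this, so you cannot rescue the argument by reserving the mixing bound for large gaps and using a crude bound for small ones. The actual proof must never divide by the full characteristic function: one splits $V$ into pieces supported near the groups being decoupled and a middle piece $V_0$ carried by a window of length $O(g)$ (subdivided further once $g$ exceeds $T^{1-2\delta}$), shows via [A1] that numerator and denominator factorize with the \emph{same} leading product so that the distant factors cancel, and only ever needs $|E[e^{iV_0}]|\ge 1-C|u|^2g/T\ge 1/2$ for the local windows. This localization (or an equivalent cancellation identity) is the content of the cited lemma and is missing from your argument.

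There is also a combinatorial and arithmetic slip in your third step. The number of nondecreasing $r$-tuples $i_1\le\cdots\le i_r$ with maximal gap exactly $g$ is $O\big(N(T)(g+1)^{r-2}\big)$, not $O(T^{r-1})$ for each $g$; summing $N(T)(g+1)^{r-2}e^{-ag/2}$ over $g$ gives $O(T)$ in total, and $T^{-r/2}\cdot T\cdot T^{r\beta}=T^{-(r-2)/2+r\beta}$ is of the required form once $\beta$ is taken small relative to $\epsilon$. Your count leads instead to $T^{-r/2+(r-1)}=T^{(r-2)/2}$, essentially the reciprocal of the target (and your displayed identity $T^{-r/2+(r-1)}=T^{-(r-2)/2-1/2}$ is false for $r\neq 3/2$), so even granting the mixing estimate the exponent does not come out. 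Both points need to be repaired before this can stand in for the argument of Lemma 5 of \cite{Yoshidaparmix}.
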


\begin{proof}
It follows in a similar way as the proof of Lemma 5 in \cite{Yoshidaparmix}. 
  \end{proof}

The next proposition is similar to Lemma 6 in \cite{Yoshidaparmix}. However, our assumption [A2] is stronger than the assumption in \cite{Yoshidaparmix}, thus we may take an arbitrary $L_3 > 0$ as the following.

\begin{proposition*}
\label{cumest_2}
For any $L_3 > 0$, $r \in \naturels$ and $a_1, \dots, a_r \in \{1, \dots, d\}$,
\begin{eqnarray*}
\left| \kappa \left[ S_T^{* (a_1)}, \dots, S_T^{* (a_r)}  \right] - \kappa \left[ S_T^{(a_1)}, \dots, S_T^{(a_r)}  \right] \right| \
\lesssim \ T^{ - L_3\beta}.
\end{eqnarray*}
\end{proposition*}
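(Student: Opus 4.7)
The plan is to exploit the multilinearity of joint cumulants together with the fact that, under the strong moment bound [A2], the truncation residual $W_i := Z_i - Z_i^*$ has arbitrarily small $L^q$ norms for every fixed $q$. Since $E[Z_i]=0$, the centring in $Z_i^*$ gives $W_i = Z_i - \phi_T(Z_i) - E[Z_i - \phi_T(Z_i)]$, and pointwise $|Z_i - \phi_T(Z_i)| \le |Z_i| 1_{\{|Z_i| \ge T^\beta\}}$. For any $q \ge 1$ and any $p > q$, Hölder and Markov yield
\begin{eqnarray*}
E\bigl[|Z_i|^q 1_{\{|Z_i| \ge T^\beta\}}\bigr] \le \|Z_i\|_p^q \cdot P[|Z_i| \ge T^\beta]^{1-q/p} \lesssim \|Z_i\|_p^p \cdot T^{-\beta(p-q)},
\end{eqnarray*}
and since $p$ is arbitrary and $\|Z_i\|_p$ is uniformly bounded by [A2], we deduce $\|W_i\|_q \lesssim T^{-M}$ for any $M > 0$, uniformly in $i$. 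Minkowski then gives $\|(S_T - S_T^*)^{(a)}\|_q \le T^{-1/2}\sum_i \|W_i^{(a)}\|_q \lesssim T^{1/2 - M}$, which, since $M$ is free, is $O(T^{-M'})$ for any $M' > 0$.

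Next, multilinearity of joint cumulants furnishes the telescoping identity
\begin{eqnarray*}
\kappa\bigl[ S_T^{(a_1)}, \dots, S_T^{(a_r)}\bigr] - \kappa\bigl[S_T^{*(a_1)}, \dots, S_T^{*(a_r)}\bigr] = \sum_{k=1}^r \kappa\bigl[S_T^{*(a_1)}, \dots, S_T^{*(a_{k-1})}, (S_T - S_T^*)^{(a_k)}, S_T^{(a_{k+1})}, \dots, S_T^{(a_r)}\bigr].
\end{eqnarray*}
To control each summand, I would invoke the moment--cumulant formula $\kappa[X_1, \dots, X_r] = \sum_\pi (-1)^{|\pi|-1} (|\pi|-1)! \prod_{B \in \pi} E\bigl[\prod_{j \in B} X_j\bigr]$, summing over set partitions $\pi$ of $\{1,\dots,r\}$, and apply generalized Hölder inside each block to obtain the crude bound $|\kappa[X_1,\dots,X_r]| \le C_r \prod_{j=1}^r \|X_j\|_r$. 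Since $\|S_T^{(a)}\|_r, \|S_T^{*(a)}\|_r \lesssim T^{1/2}$ by Minkowski and [A2], and $\|(S_T - S_T^*)^{(a_k)}\|_r \lesssim T^{-M}$ for any $M$, each of the $r$ summands is bounded by $C_r \cdot T^{-M} \cdot T^{(r-1)/2}$. Choosing $M \ge L_3\beta + (r-1)/2$ then delivers the desired estimate $T^{-L_3\beta}$.

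The main obstacle is simply the clean moment bookkeeping, in particular verifying the crude bound $|\kappa[X_1,\dots,X_r]| \le C_r \prod_{j=1}^r \|X_j\|_r$ for mixed cumulants; this is routine via block-by-block Hölder once the partition expansion is in place. The whole argument is substantially easier than the corresponding Lemma~6 in \cite{Yoshidaparmix} precisely because the blanket $L^p$-bound in [A2], valid for every $p$, trivializes the truncation-tail estimate and allows us to take the exponent $L_3$ arbitrarily large.
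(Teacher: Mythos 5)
Your proof is correct, and it rests on the same engine as the paper's: the blanket moment bound in [A2] makes the truncation tail $|Z_i|1_{\{|Z_i|\ge T^{\beta}\}}$ super-polynomially small in $T$, so any polynomial-in-$T$ loss elsewhere can be absorbed into the free exponent. The organization differs, though. The paper first uses multilinearity to expand $\kappa\bigl[S_T^{(a_1)},\dots,S_T^{(a_r)}\bigr]$ into $T^{-r/2}\sum_{j_1,\dots,j_r}\kappa\bigl[Z_{j_1}^{(a_1)},\dots,Z_{j_r}^{(a_r)}\bigr]$ over all $N(T)^r$ tuples of blocks, and then, for each tuple, compares the starred and unstarred cumulants via the partition formula with a double telescoping (first over the blocks of the partition, then over the factors inside a single expectation), paying a uniform $T^{-L_3\beta}$ per tuple and hence $T^{r/2-L_3\beta}$ overall. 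You instead telescope once, in the $r$ arguments of the cumulant of the normalized sums, and control each resulting mixed cumulant by the crude bound $|\kappa[X_1,\dots,X_r]|\le C_r\prod_j\|X_j\|_r$ together with $\|S_T-S_T^*\|_r\lesssim T^{-M}$ and $\|S_T\|_r,\|S_T^*\|_r\lesssim T^{1/2}$, losing only $T^{(r-1)/2}$. Your version is somewhat cleaner bookkeeping-wise, since it replaces the paper's nested telescoping over partitions and factors by a single application of multilinearity plus a one-line H\"older estimate; what it gives up is any per-tuple information (which the paper also does not use here, since both arguments simply let the arbitrary exponent swallow the polynomial factor). Both deliver the stated $T^{-L_3\beta}$.
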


\begin{proof}
We immediately get
\begin{eqnarray*}
&&\left| \kappa \left[ S_T^{* (a_1)}, \dots, S_T^{* (a_r)}  \right] - \kappa \left[ S_T^{(a_1)}, \dots, S_T^{(a_r)}  \right] \right| \\
&\le& T^{-r/2}\sum_{0 \le j_1, \dots, j_r \le N(T)} \left| \kappa\left[Z_{j_1}^{*(a_1)}, \dots, Z_{j_r}^{*(a_r)}\right] - \kappa\left[Z_{j_1}^{(a_1)}, \dots, Z_{j_r}^{(a_r)}\right] \right|\\
&\le& T^{-r/2}\sum_{0 \le j_1, \dots, j_r \le N(T)} \sum_{l = 1}^r \sum_{\substack{ \alpha_1, \dots, \alpha_l;\\ \alpha_1 + \dots + \alpha_l = \{1, \dots, r\}}} \frac{(-1)^{l-1}}{l} \left| \prod_{m = 1}^l E \left[  \prod_{i \in \alpha_m} \phi^{(a_i)} _T(Z_{j_i})\right] - \prod_{m = 1}^l E \left[  \prod_{i \in \alpha_m} Z_{j_i}^{(a_i)} \right] \right|.
\end{eqnarray*}

Moreover,
\begin{eqnarray*}
&&\left| \prod_{m = 1}^l E \left[  \prod_{i \in \alpha_m} \phi^{(a_i)} _T(Z_{j_i})\right] - \prod_{m = 1}^l E \left[  \prod_{i \in \alpha_m} Z_{j_i}^{(a_i)} \right] \right|\\
&\le& \sum_{m = 1}^l \left( \prod_{m'  = 1}^{m - 1} \left| E \left[  \prod_{i \in \alpha_{m'}} \phi^{(a_i)} _T(Z_{j_i})\right] \right| \right)
\left| E \left[  \prod_{i \in \alpha_m} \phi^{(a_i)} _T(Z_{j_i}) -  \prod_{i \in \alpha_m} Z_{j_i}^{(a_i)} \right] \right|
\left( \prod_{m'  = m + 1}^{l} \left| E \left[  \prod_{i \in \alpha_{m'}} Z_{j_i}^{(a_i)} \right] \right| \right),
\end{eqnarray*}
and the conditon [A2] yeilds
\begin{eqnarray*}
&&\left| E \left[  \prod_{i \in \alpha_m} \phi^{(a_i)} _T(Z_{j_i}) -  \prod_{i \in \alpha_m} Z_{j_i}^{(a_i)} \right] \right|\\
&=& \left| \sum_{k'=1}^k E \left[  \left(\prod_{i =1}^{k'-1} \phi^{(a_i)} _T(Z_{j_i})\right)
\left(\prod_{i =k'+1}^{k} Z_{j_i}^{(a_i)} \right)
\left( \phi^{(a_{k'})} _T(Z_{j_{k'}}) -  Z_{j_{k'}}^{(a_{k'})} \right) 1_{\left\{ \left| Z_{j_{k'}}^{(a_{k'})} \right| \ge T^{\beta} \right\}} \right] \right| \\
&\le& \left| \sum_{k'=1}^k T^{ - L_3 \beta } E \left[  \left(\prod_{i =1}^{k'-1} \phi^{(a_i)} _T(Z_{j_i})\right)
\left(\prod_{i =k'+1}^{k} Z_{j_i}^{(a_i)} \right)
\left( \phi^{(a_{k'})} _T(Z_{j_{k'}}) -  Z_{j_{k'}}^{(a_{k'})} \right) \left| Z_{j_{k'}}^{(a_{k'})} \right|^{L_3}1_{\left\{ \left| Z_{j_{k'}}^{(a_{k'})} \right| \ge T^{\beta} \right\}} \right] \right| \ \lesssim \ T^{ - L_3 \beta }.
\end{eqnarray*}
Therefore,
\begin{eqnarray*}
\left| \kappa \left[ S_T^{* (a_1)}, \dots, S_T^{* (a_r)}  \right] - \kappa \left[ S_T^{(a_1)}, \dots, S_T^{(a_r)}  \right] \right| 
\ \lesssim \ T^{ - L_3 \beta  + \frac{r}{2}}.
\end{eqnarray*}
Since $L_3$ is arbitrary, we get the conclusion.
  \end{proof}

From Proposition \ref{cumest_1} and Proposition \ref{cumest_2}, we immediately get the following statement.

\begin{corollary*}
\label{cumest_3}
Let $\bar{L} > 0$. Set $r \in \naturels$ with $r \le \bar{L}$ and $a_1, \dots, a_r \in \{1, \dots, d\}$.  Then, for any $\ep > 0$,
\begin{eqnarray*}
\left| \kappa \left[ S_T^{(a_1)}, \dots, S_T^{(a_r)}  \right] \right| \
\lesssim \ T^{-\frac{r-2}{2} + \ep(r-1)}.
\end{eqnarray*}
\end{corollary*}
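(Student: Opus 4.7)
The plan is to obtain the bound on the cumulants of $S_T$ directly from the two preceding results via the triangle inequality, after dealing with the two small cosmetic differences between what is stated there and what is needed here.

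First I would specialize Proposition \ref{cumest_1} to $u = 0$ and $\eta = 0$. In that case, the random shift $V = \eta u' S_T^*$ is identically zero, so the shifted cumulant $\kappa[\,\cdot\,](V)$ reduces to the ordinary cumulant $\kappa[\,\cdot\,]$, and the indicator $\mathbf{1}_{\{|u|<T^\delta\}}$ equals $1$. Hence Proposition \ref{cumest_1} yields, for every $\ep>0$,
\begin{eqnarray*}
\bigl| \kappa \bigl[ S_T^{* (a_1)}, \dots, S_T^{* (a_r)}  \bigr] \bigr| \ \lesssim \ T^{-\frac{r-2}{2} + \ep(r-1)}.
\end{eqnarray*}

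Next I would apply Proposition \ref{cumest_2}, which for arbitrary $L_3>0$ gives
\begin{eqnarray*}
\bigl| \kappa \bigl[ S_T^{* (a_1)}, \dots, S_T^{* (a_r)} \bigr] - \kappa \bigl[ S_T^{(a_1)}, \dots, S_T^{(a_r)} \bigr] \bigr| \ \lesssim \ T^{-L_3 \beta}.
\end{eqnarray*}
Since $\beta\in(0,1/2)$ is fixed (chosen prior to the whole argument) and $r\le \bar L$ is bounded, I can pick $L_3$ large enough — for instance any $L_3$ with $L_3\beta \ge \bar L/2$ — so that $T^{-L_3\beta}$ is dominated by $T^{-(r-2)/2+\ep(r-1)}$ uniformly in $r\le\bar L$.

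The conclusion then follows by the triangle inequality,
\begin{eqnarray*}
\bigl| \kappa \bigl[ S_T^{(a_1)}, \dots, S_T^{(a_r)} \bigr] \bigr|
&\le& \bigl| \kappa \bigl[ S_T^{* (a_1)}, \dots, S_T^{* (a_r)} \bigr] \bigr|
+ \bigl| \kappa \bigl[ S_T^{* (a_1)}, \dots, S_T^{* (a_r)} \bigr] - \kappa \bigl[ S_T^{(a_1)}, \dots, S_T^{(a_r)} \bigr] \bigr| \\
&\lesssim& T^{-\frac{r-2}{2} + \ep(r-1)} + T^{-L_3\beta} \ \lesssim \ T^{-\frac{r-2}{2} + \ep(r-1)}.
\end{eqnarray*}
There is essentially no obstacle here; the only point that deserves a line of care is verifying that the shifted cumulant in Proposition \ref{cumest_1} really degenerates to the ordinary cumulant at $u=0$, and that the choice of $L_3$ in Proposition \ref{cumest_2} can be made uniformly over $r\le\bar L$ and $\ep>0$ since $\bar L$ is fixed and $\beta$ has already been selected independently of $\ep$.
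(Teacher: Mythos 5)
Your proposal is correct and is exactly the argument the paper intends: the corollary is stated as an immediate consequence of Propositions \ref{cumest_1} and \ref{cumest_2}, obtained by evaluating the shifted cumulant at $u=0$, $\eta=0$ (where $\kappa[\,\cdot\,](0)$ is by definition the ordinary cumulant) and absorbing the truncation error $T^{-L_3\beta}$ by choosing $L_3$ large relative to $\bar L/(2\beta)$. Nothing further is needed.
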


We evaluate the gap between $H_T(u)$ and its expansion $\hat{\Psi}_{T,p,D}(u)$. Allowing for the abuse of symbols, we define $\mathbb{D}$ as the derivative with respect to $u$ in the same way as (\ref{D}).
\begin{proposition*}
\label{cf}
Let $\bar{L}>0$. There exist $D>0$, $\delta > 0$ and $\delta_0 > d\delta$ such that
\begin{eqnarray*}
1_{\left\{ |u| < T^{\delta} \right\}}(u) \left|\mathbb{D}^{\idx}\left( H_T(u) - \hat{\Psi}_{T,p,D}(u) \right)\right| \
\lesssim \ T^{-\frac{p-2}{2} - \delta_0}
\end{eqnarray*}
uniformly in $u \in \reels^d$ and  $\idx \in \{1, \dots, d\}^l$ with $l \le \bar{L}$.
\end{proposition*}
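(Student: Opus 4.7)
The plan is to run the G\"otze--Hipp / Yoshida scheme, modified by the $T^{-D}I$ regularization, in three steps: Taylor-expand $\log H_T(u)$ at the origin through the cumulants of $S_T^*$, replace those cumulants by the cumulants of $S_T$, and exponentiate. The quadratic piece $\exp(\tfrac{1}{2}\chi_{T,2}(u))=\exp(-\tfrac{1}{2}u'\mathrm{Var}[S_T]u)$ differs from $\exp(-\tfrac{1}{2}u'\Sigma_{T,D}u)$ only by the multiplicative factor $\exp(\tfrac{T^{-D}}{2}|u|^2)$, which lies within $O(T^{-D+2\delta})$ of $1$ on $\{|u|<T^\delta\}$; this is harmless once $D$ is chosen much larger than $\delta_0$ and $\delta$.

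First I would use the cumulant identity $\partial_u^{\alpha}\log H_T(u)=i^{|\alpha|}\kappa[S_T^{*(b_1)},\ldots,S_T^{*(b_{|\alpha|})}](u'S_T^*)$ to Taylor-expand $\log H_T$ to order $p$. Proposition~\ref{cumest_1} controls both the Taylor polynomial coefficients (cumulants of $S_T^*$ at the origin) and the remainder (shifted cumulants $\kappa[\cdots](\eta u'S_T^*)$ appearing in the integral form of the remainder); on $\{|u|<T^\delta\}$ with $\delta$ small, this bounds the remainder $R_p(u)$ and its derivatives up to order $\bar{L}$ by $o(T^{-(p-2)/2-\delta_0})$. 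Proposition~\ref{cumest_2} with $L_3$ arbitrarily large then replaces each cumulant of $S_T^*$ in the Taylor polynomial by the corresponding cumulant of $S_T$, with error $O(T^{-L_3\beta})$ per term, yielding
\[
\log H_T(u) = \tfrac{1}{2}\chi_{T,2}(u)+\sum_{r=3}^{p}\tfrac{1}{r!}\chi_{T,r}(u)+\widetilde{R}_p(u),
\]
where $\widetilde{R}_p$ satisfies the same derivative bound as $R_p$.

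Next I exponentiate and factor out $\exp(-\tfrac{1}{2}u'\Sigma_{T,D}u)$:
\[
H_T(u) = \exp\!\Big(-\tfrac{1}{2}u'\Sigma_{T,D}u\Big)\cdot\exp\!\Big(\tfrac{T^{-D}}{2}|u|^2\Big)\cdot\exp\!\Big(\sum_{r=3}^{p}\tfrac{1}{r!}\chi_{T,r}(u)\Big)\cdot\exp(\widetilde{R}_p(u)).
\]
Corollary~\ref{cumest_3} gives $|\chi_{T,r}(u)|\lesssim T^{-(r-2)/2+r\delta+\epsilon(r-1)}$ on $\{|u|<T^\delta\}$, so I expand the third factor termwise as in (\ref{formal Taylor expansion}) with $\epsilon=1$; the monomials of total $T$-order exceeding $-(p-2)/2$ reproduce precisely $1+\sum_{r=1}^{p-2}T^{-r/2}\tilde{P}_{T,r}(u)/\exp(\tfrac{1}{2}\chi_{T,2}(u))$, whose product with $\exp(-\tfrac{1}{2}u'\Sigma_{T,D}u)$ equals $\hat{\Psi}_{T,p,D}(u)$. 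The tail of this expansion, together with $\exp(\tfrac{T^{-D}}{2}|u|^2)-1$ and $\exp(\widetilde{R}_p(u))-1$, each contributes $O(T^{-(p-2)/2-\delta_0})$.

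To handle derivatives $\mathbb{D}^{\idx}$ with $|\idx|\le\bar{L}$ I apply the Leibniz rule: a derivative either hits the Gaussian factor (producing a polynomial in $u$ of degree $\le|\idx|$ whose coefficients are entries of $\Sigma_{T,D}$, uniformly $O(1)$ by \textnormal{[A2]}) or a polynomial piece, whose coefficients are cumulants controlled by Corollary~\ref{cumest_3}. All polynomial blow-up in $u$ is at most $T^{C(\bar{L})\delta}$ and is absorbed by taking $\delta$ small enough that $\delta_0>d\delta$ holds and the $o(T^{-(p-2)/2-\delta_0})$ bounds above survive. The main obstacle is the first step: the Taylor remainder of $\log H_T$ must decay faster than $T^{-(p-2)/2-\delta_0}$ uniformly on the growing region $\{|u|<T^\delta\}$ and for all derivatives up to order $\bar{L}$. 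This is exactly what Proposition~\ref{cumest_1} delivers, since the shifted cumulants $\kappa[\cdots](\eta u'S_T^*)$ arise naturally as the $(p+1)$-st derivative of $\log H_T$ at an intermediate point on the segment $[0,u]$, converting the delicate high-order expansion of a characteristic function into a mixing-based cumulant estimate.
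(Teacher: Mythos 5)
Your proposal is correct and follows essentially the same route as the paper: Taylor-expand $\log H_T$ through shifted cumulants of $S_T^*$ (controlled by Proposition \ref{cumest_1}), swap in the cumulants of $S_T$ via Proposition \ref{cumest_2}, exponentiate and truncate using Corollary \ref{cumest_3}, absorb the $T^{-D}I$ regularization through the factor $\exp(\tfrac{T^{-D}}{2}|u|^2)-1=O(T^{-D+2\delta})$, and handle $\mathbb{D}^{\idx}$ by the Leibniz rule with $\delta$ small. This is exactly the decomposition $H_T=\hat{\Psi}^*_{T,p}+R^*_{p+1}$ used in the paper's proof, so no further comparison is needed.
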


\begin{proof}
Denote $\kappa^*[u^{\otimes r}](V) = \kappa \left[ u'S_T^*, \dots, u'S_T^* \right] (V)$, $\kappa[u^{\otimes r}](V) = \kappa \left[ u'S_T, \dots, u'S_T \right] (V)$, $\kappa^*[u^{\otimes r}] = \kappa^*[u^{\otimes r}](0)$ and $\kappa[u^{\otimes r}] = \kappa[u^{\otimes r}](0)$. We have
\begin{eqnarray*} 
H_T(u)
= \hat{\Psi}^*_{T,p}(u) + R^*_{p+1}(u),
\end{eqnarray*}
where
\begin{eqnarray*}
\hat{\Psi}^*_{T,p}(u)
= \exp \left( \chi_{T,2}(u) \right) \left\{ 1 + \sum_{j = 1}^p \sum_{r_1, \dots, r_j = 1}^{p-2} 1_{\{r_1 + \dots + r_j \le p-2\}} (-1)^j i^{r_1+ \dots + r_j } \frac{\kappa^*[u^{\otimes r_1+2}] \cdots \kappa^*[u^{\otimes r_j+2}]}{j!(r_1+2)!\cdots(r_j+2)!} \right\},
\end{eqnarray*}
\begin{eqnarray*}
R^*_{p+1}(u)
&=& \exp \left( \chi_{T,2}(u) \right) \Bigg\{ \sum_{j = 1}^p \sum_{r_1, \dots, r_j = 1}^{p-2} 1_{\{r_1 + \dots + r_j \ge p-1\}} (-1)^j i^{r_1+ \dots + r_j } \frac{\kappa^*[u^{\otimes r_1+2}] \cdots \kappa^*[u^{\otimes r_j+2}]}{j!(r_1+2)!\cdots(r_j+2)!} \\
&&+ \sum_{j=1}^p \sum_{j'=0}^{j-1} \frac{1}{j'!} \Bigg( \begin{array}{ll} j \\ j' \end{array} \Bigg) \left( \sum_{r=3}^p \frac{i^r}{r!} \kappa^*[u^{\otimes r}] \right)^{j'} \left( R_{p+1}(u)\right)^{j-j'} \\
&&+ \left( \sum_{r=3}^p \frac{i^r}{r!} \kappa^*[u^{\otimes r}] + R_{p+1}(u) \right)^{p+1} \frac{1}{p!} \int_0^1(1-t)^p \exp\left( t \sum_{r=3}^p \frac{i^r}{r!} \kappa^*[u^{\otimes r}] + t R_{p+1}(u)  \right) dt \Bigg\}
\end{eqnarray*}
and
\begin{eqnarray*}
R_{p+1}(u)
= \frac{i^{p+1}}{p!} \int_0^1 (1-s)^p \kappa^*[u^{\otimes p+1}](su'S_T^*) ds - \frac{1}{2}\left( \kappa^*[u^{\otimes 2}] + \chi_{T,2}(u) \right).
\end{eqnarray*}

\

First, we consider $\left|\mathbb{D}^{\idx}\left( \hat{\Psi}^*_{T,p}(u) - \hat{\Psi}_{T,p,D}(u) \right)\right|$. From the definition, we have
\begin{eqnarray*}
&&\left|\mathbb{D}^{\idx}\left( \hat{\Psi}^*_{T,p}(u) - \hat{\Psi}_{T,p,D}(u) \right)\right| \\
\ &\lesssim& \ \Bigg|\mathbb{D}^{\idx} \Bigg[ \Big( e^{ \chi_{T,2}(u) } - e^{ - \frac{1}{2} u' \Sigma_{T,D}u } \Big) \Bigg\{ 1 + \sum_{j = 1}^p \sum_{r_1, \dots, r_j = 1}^{p-2} 1_{\{r_1 + \dots + r_j \le p-2\}} \kappa^*[u^{\otimes r_1+2}] \cdots \kappa^*[u^{\otimes r_j+2}] \Bigg\} \Bigg] \Bigg| \\
&&+ \ \Bigg|\mathbb{D}^{\idx} \Bigg[  e^{ - \frac{1}{2} u' \Sigma_{T,D}u }\sum_{j = 1}^p \sum_{r_1, \dots, r_j = 1}^{p-2} 1_{\{r_1 + \dots + r_j \le p-2\}} \Big( \kappa^*[u^{\otimes r_1+2}] \cdots \kappa^*[u^{\otimes r_j+2}] - \kappa[u^{\otimes r_1+2}] \cdots \kappa[u^{\otimes r_j+2}] \Big) \Bigg] \Bigg|.
\end{eqnarray*}
In the following, we assume that $u$ satisfies $|u| \le T^{\delta}$. We have
\begin{eqnarray}
\label{exp deff}
\Big| e^{ \chi_{T,2}(u) } - e^{ - \frac{1}{2} u' \Sigma_{T,D}u } \Big|
\ \le \ \Big|  e^{ \frac{1}{2}T^{-D}|u|^2 } - 1 \Big|
\ \lesssim \ T^{-D + 2\delta}.
\end{eqnarray}
For the first term, by applying Proposition \ref{cumest_1}, Corollary \ref{cumest_3} and (\ref{exp deff}), we get
\begin{eqnarray*}
&&\Bigg|\mathbb{D}^{\idx} \Bigg[ \Big( e^{ \chi_{T,2}(u) } - e^{ - \frac{1}{2} u' \Sigma_{T,D}u } \Big) \Bigg\{ 1 + \sum_{j = 1}^p \sum_{r_1, \dots, r_j = 1}^{p-2} 1_{\{r_1 + \dots + r_j \le p-2\}} \kappa^*[u^{\otimes r_1+2}] \cdots \kappa^*[u^{\otimes r_j+2}] \Bigg\} \Bigg] \Bigg| \nonumber \\
&\lesssim& \left(1 + \big|Var[S_T]\big| + T^{-D} \right)^{\sharp \idx} T^{-D + \delta(2 + \sharp \idx)}   \sum_{j = 1}^p \sum_{r_1, \dots, r_j = 1}^{p-2} 1_{\{r_1 + \dots + r_j \le p-2\}}  T^{ (r_1 + \dots + r_j)\left( -\frac{1}{2} + \ep + \delta\right) + j(\ep + 2\delta )} \nonumber \\
&\lesssim& T^{-D  + \ep (p + \bar{L}) + \delta(2p + 2 + \bar{L})}.
\end{eqnarray*}
By Proposition \ref{cumest_2} and Corollary \ref{cumest_3}, the second term is estimated as below;
\begin{eqnarray*}
&& \Bigg|\mathbb{D}^{\idx} \Bigg[  e^{ - \frac{1}{2} u' \Sigma_{T,D}u } \sum_{j = 1}^p \sum_{r_1, \dots, r_j = 1}^{p-2} 1_{\{r_1 + \dots + r_j \le p-2\}} \Big( \kappa^*[u^{\otimes r_1+2}] \cdots \kappa^*[u^{\otimes r_j+2}] - \kappa[u^{\otimes r_1+2}] \cdots \kappa[u^{\otimes r_j+2}] \Big) \Bigg] \Bigg| \\
&\lesssim&  \Bigg|\mathbb{D}^{\idx} \Bigg[  e^{ - \frac{1}{2} u' \Sigma_{T,D}u } \sum_{j = 1}^p \sum_{r_1, \dots, r_j = 1}^{p-2} 1_{\{r_1 + \dots + r_j \le p-2\}}\\
&& \times \sum_{k=1}^j \kappa^*[u^{\otimes r_1+2}] \cdots \kappa^*[u^{\otimes r_{k-1}+2}] \Big( \kappa^*[u^{\otimes r_k+2}] -  \kappa[u^{\otimes r_k+2}] \Big) \kappa[u^{\otimes r_{k+1}+2}] \cdots \kappa[u^{\otimes r_j+2}] \Bigg] \Bigg|\\
&\lesssim&  \left(1 + \big|Var[S_T]\big| + T^{-D} \right)^{\sharp \idx} T^{\delta\sharp \idx} T^{- L_3 \beta +  \ep p + \delta(3p-2)} 
\ \lesssim \ T^{-L_3\beta +  \ep (p + \bar{L}) + \delta(3p - 2 + \bar{L}) }.
\end{eqnarray*}
Therefore, for any $\ep$, $\delta$ and $\delta_0$, we can choose sufficiently large $D$ and $L_3$ such that 
\begin{eqnarray*}
\left|\mathbb{D}^{\idx}\left( \hat{\Psi}^*_{T,p}(u) - \hat{\Psi}_{T,p,D}(u) \right)\right| \ \lesssim \ T^{-\frac{p-2}{2} - \delta_0}.
\end{eqnarray*}

Finally, we have to show that $\left|\mathbb{D}^{\idx}\left( H_T(u) - \hat{\Psi}^*_{T,p}(u) \right)\right| = \left|\mathbb{D}^{\idx} R_{p+1}^*(u) \right| \lesssim  T^{-\frac{p-2}{2} - \delta_0}$. However, it follows by the same method as the proof of Lemma 7 in \cite{Yoshidaparmix}. In particular, we can choose $\delta_0$ and $\delta$ with $\delta_0 > d\delta$ in this proof. 
  \end{proof}


Referring to \cite{GotzeHipp1978}, we will prove Theorem \ref{Main Thm} by using the smoothness of a function $f$. Let $S'_T = T^{-\frac{1}{2}}\sum_{i = 0}^{N(T)} \phi_T(Z_i)$ and $e_T = T^{-\frac{1}{2}}\sum_{i = 0}^{N(T)} E[\phi_T(Z_i)]$. Note that
\begin{eqnarray*}
\left| E[\phi_T(Z_i)] \right|
= \left| E[\phi_T(Z_i) - Z_i]\right|
\le E[ |Z_i| 1_{\{|Z_i| > T^{\beta}\}}]
\le T^{-n\beta}E[|Z_i|^{n+1}]
\end{eqnarray*}
 for any $i = 0, \dots, N(T)$ and $n \in \naturels$. Therefore, [A2] yields 
\begin{eqnarray}
\label{e_T}
|e_T| \ \lesssim \ T^{-L_4}
\end{eqnarray}
for an arbitrarily large constant $L_4 > 0$.

\begin{proof}[{\bf Proof of Theorem \ref{Main Thm} }]
Let $\Gamma= \lceil \frac{p-1}{2\delta} \rceil$ for some $\delta \in (0,1)$ and $f \in \mathscr{E}(\Gamma, L_1, L_2)$. Since 
\begin{eqnarray*}
\left| E\left[ f( S_T )\right] - \int_{\reels^d}f(z) p_{T,p,D}(z)dz \right|
&\le& \Big| E\left[f(S_T)\right] - E\left[f(S'_T)\right] \Big| 
+ \left| E\left[f(S'_T)\right] - \int_{\reels^d}f(z + e_T) p_{T,p,D}(z)dz \right| \\
&&+ \left| \int_{\reels^d}f(z + e_T) p_{T,p,D}(z)dz - \int_{\reels^d}f(z) p_{T,p,D}(z)dz \right|
=: \Delta_1 + \Delta_2 + \Delta_3,
\end{eqnarray*}
we only have to estimate $\Delta_1, \Delta_2$ and $\Delta_3$.

First, we consider $\Delta_1$. Let $\eta > 0$. We can assume that $L_1$ is even by retaking $L_1$ and $L_2$ that satisfy $\sup_{|\alpha| \le \Gamma}|\partial^\alpha f(x)| \le L_2(1+|x|)^{L_1}$ for every $x \in \reels^d$. We set $A = \{ |S_T| \le T^{\eta}\}$ and $B = \{ |S'_T| \le T^{\eta}\}$. Similarly to Lemma 3.3 in \cite{GotzeHipp}, we get 

\begin{eqnarray*}
\Delta_1 
&\lesssim& T^{\eta L_1}P\big[S_T \neq S'_T\big] + E\big[|S_T|^{L_1} 1_{A^c}\big]  + E\big[|S'_T|^{L_1} 1_{B^c}\big] \\
&\lesssim& T^{\eta L_1}P\big[S_T \neq S'_T\big] + E\big[|S'_T|^{L_1} 1_{B^c}\big] + \Big| E\big[|S_T|^{L_1}\big] - E\big[|S'_T|^{L_1}]\big]\Big|.
\end{eqnarray*}
Let $L > 0$ be an arbitrary large constant. Since $P\big[S_T \neq S'_T\big] \le \sum_{i=0}^{N(T)}P\big[|Z_i| > T^{\beta}] \lesssim T^{-n\beta+\frac{1}{2}}$ for any $n \in \naturels$, $T^{\eta L_1}P\big[S_T \neq S'_T\big] \lesssim T^{-L}$ holds. 
Since $|S'_T| \le |S^*_T| + |e_T|$, we have
\begin{eqnarray}
\label{S'_T}
E\big[|S'_T|^{L_1} 1_{B^c}\big]  \ \lesssim \ E \left[ |S^*_T|^{L_1} 1_{ \{ |S^*_T| > \frac{ T^{\eta} }{2} \} } \right] + |e_T|^{L_1}.
\end{eqnarray}
The moment has the representation by cumulants; for any even $n \in \naturels$
\begin{eqnarray*}
E \left[ |S^*_T|^n \right] 
=\sum_{|\alpha| = n} \sum_{k = 1}^n \sum_{\substack{ \alpha_1, \dots, \alpha_k;\\ \alpha_1 + \dots + \alpha_k = \alpha}} 
\frac{\alpha!}{k!\alpha_1! \cdots \alpha_k!}
\prod_{m = 1}^k \kappa_{\alpha_m}\left[S^*_T\right],
\end{eqnarray*}
where $\kappa_{\alpha_m}\left[S^*_T\right] = (-i)^{|\alpha_m|}\partial^{\alpha_m}\log E[e^{iu'S^*_T}]|_{u=0}$ and $\alpha! = \alpha^{1;}! \cdots \alpha^{d;}!$ for $\alpha = (\alpha^{1;}, \dots, \alpha^{d;}) \in \mathbb{Z}_+^d$.
From Proposition \ref{cumest_1} and the representation of moments by cumulants, $E \left[ |S^*_T|^n \right] \ \lesssim \ T^{\ep n}$ for any even $n \in \naturels$ and $\ep > 0$. Therefore, 
\begin{eqnarray*}
 E \left[ |S^*_T|^{L_1} 1_{ \{ |S^*_T| > \frac{ T^{\eta} }{2} \} } \right]
\ \le \ \left(\frac{T^{\eta}}{2}\right)^{-n} E \left[ |S^*_T|^{L_1+n} \right]
\ \lesssim \ T^{-\eta n + \ep(L_1 + n)}.
\end{eqnarray*}
(\ref{e_T}), (\ref{S'_T}) and the above inequality lead $E\big[|S'_T|^{L_1} 1_{B^c}\big]  \lesssim T^{-L}$ by choosing $\eta > \ep$ and sufficiently large $n$.
Since we took $L_1$ as an even number, the representation of moments by cumulants leads
\begin{eqnarray*}
\Big| E\big[|S_T|^{L_1}\big] - E\big[|S'_T|^{L_1}]\big]\Big|
\le \sum_{|\alpha| = L_1} \sum_{k = 1}^{L_1} \sum_{\substack{ \alpha_1, \dots, \alpha_k;\\ \alpha_1 + \dots + \alpha_k = \alpha}} 
\frac{\alpha!}{k!\alpha_1! \cdots \alpha_k!}
\left|\prod_{m = 1}^k \kappa_{\alpha_m}\left[S_T\right] - \prod_{m = 1}^k \kappa_{\alpha_m}\left[S'_T\right] \right|.
\end{eqnarray*}
From the definition of the cumulant, $\kappa_{\alpha_m}\left[S'_T\right] = \kappa_{\alpha_m}\left[S^*_T\right]$ for $|\alpha_m| \ge 2$ and $\kappa_{\alpha_m}\left[S'_T\right] =  e_T$ for $|\alpha_m| = 1$. Thus, Proposition \ref{cumest_1},  Proposition \ref{cumest_2}, Corollary \ref{cumest_3} and (\ref{e_T}) yeild
\begin{eqnarray*}
\left|\prod_{m = 1}^k \kappa_{\alpha_m}\left[S_T\right] - \prod_{m = 1}^k \kappa_{\alpha_m}\left[S'_T\right] \right|
&\lesssim& \left|\prod_{m = 1}^k \kappa_{\alpha_m}\left[S_T\right] - \prod_{m = 1}^k \kappa_{\alpha_m}\left[S^*_T\right] \right| + T^{-L}\\
&=&\left|\sum_{k'=1}^k \Bigg( \prod_{m = 1}^{k'-1} \kappa_{\alpha_m}\left[S_T\right] \Bigg) \left( \kappa_{\alpha_{k'}}\left[S_T\right] - \kappa_{\alpha_{k'}}\left[S^*_T\right]\right) \Bigg( \prod_{m = k'+1}^k \kappa_{\alpha_m}\left[S^*_T\right] \Bigg) \right| + T^{-L}\\
&\lesssim& T^{-\frac{1}{2}(|\alpha_1| + \cdots + |\alpha_k| - 2k) + \ep(|\alpha_1| + \cdots + |\alpha_k| - k) - L_3\beta} + T^{-L}
\lesssim T^{-L}.
\end{eqnarray*}
Therefore, we get $\Delta_1 \lesssim T^{-L}$. Since $L$ is an arbitrary constant, we get $\Delta_1 \ \lesssim \ T^{-\frac{p-2}{2}}$.

Second, we estimate $\Delta_2$. Write $h_{e_T}(z) = h(z + e_T)$ for any function $h$ on $\reels^d$. Denote the distribution of $S^*_T$ as $dQ^*_T$. Then, we can rewrite
\begin{eqnarray*}
\Delta_2
=\left| \int_{\reels^d}f_{e_T}(z)  d\big(Q^*_T - \Psi_{T,p,D} \big)(z)\right| 
\end{eqnarray*}
For $z, u \in \reels^d$, the Taylor's theorem yields
\begin{eqnarray*}
f_{e_T}(z)
= \sum_{\alpha; |\alpha|\le\Gamma}\frac{\partial^{\alpha}f_{e_T}(z + u)}{\alpha!}(-u)^{\alpha} + g_T^{-1}(z, u)
\end{eqnarray*}
where 
\begin{eqnarray*}
 g_T^{-1}(z, u)
 = \sum_{\alpha; |\alpha| = \Gamma} (-u)^{\alpha} \frac{\Gamma}{\alpha!} \int_0^1 \nu^{\Gamma } \Big( \partial^{\alpha}f_{e_T}(z + \nu u) - \partial^{\alpha}f_{e_T}(z + u) \Big)d\nu.
\end{eqnarray*}
Let $\mathcal{K}$ be a probability measure on $\reels^d$ such that $\int_{\reels^d}|z|^{\bar{L}}d\mathcal{K}(z) < \infty$ for sufficiently large $\bar{L} > 0$ and its Fourier transformation $\hat{\mathcal{K}}(u)$ satisfies $\hat{\mathcal{K}}(u)=0$ if $|u| > 1$. (Such $\mathcal{K}$ exists. See Theorem 10.1 in \cite{Bhattacharya}.) Moreover, let $d\mathcal{K}_T(u) = d\mathcal{K}(T^{-\delta}u)$ and $d\mathcal{K}_{T, \alpha}(u) = u^{\alpha}d\mathcal{K}_T(u)$. We have 
\begin{eqnarray}
\label{partation}
&&\int_{\reels^d}f_{e_T}(z)  d\big(Q^*_T - \Psi_{T,p,D} \big)(z) \nonumber\\
&=& \sum_{\alpha; |\alpha| \le \Gamma} \int_{\reels^d \times \reels^d} \frac{\partial^{\alpha}f_{e_T}(z + u)}{\alpha!}(-u)^{\alpha}   d\big(Q^*_T - \Psi_{T,p,D} \big)(z) d\mathcal{K}_T(u) 
+  \int_{\reels^d \times \reels^d}  g_T^{-1}(z, u)  d\big(Q^*_T - \Psi_{T,p,D} \big)(z) d\mathcal{K}_T(u) \nonumber\\
&=& \sum_{\alpha; |\alpha| \le \Gamma} \frac{(-1)^{\alpha}}{\alpha!} \int_{\reels^d} \partial^{\alpha}f_{e_T}(x)  d\left(\mathcal{K}_{T,\alpha}*\big(Q^*_T - \Psi_{T,p,D} \big)\right)(x) 
+  \int_{\reels^d \times \reels^d}  g_T^{-1}(z, u)   d\big(Q^*_T - \Psi_{T,p,D} \big)(z) d\mathcal{K}_T(u).
\end{eqnarray}
We know that $\partial^{\alpha}f_{e_T}(x) = \partial^{\alpha}f(x + e_T)$ and $|e_T|$ is bounded in $T$. Then, Lemma 11.6 \cite{Bhattacharya} and well-known properties of Fourier transform lead
\begin{eqnarray*}
&&\left| \sum_{\alpha; |\alpha| \le \Gamma} \frac{(-1)^{\alpha}}{\alpha!} \int_{\reels^d} \partial^{\alpha}f_{e_T}(x)  d\left(\mathcal{K}_{T,\alpha}*\big(Q^*_T - \Psi_{T,p,D} \big)\right)(x) \right| \\
&\le& \left(\sup_{\substack{|\alpha| \le \Gamma\\x \in \reels^d}}\frac{\left| \partial^\alpha f(x) \right|}{1 + |x|^{L_1}}\right) \left| \sum_{\alpha; |\alpha| \le \Gamma} \frac{(-1)^{\alpha}}{\alpha!} \int_{\reels^d} 1 + |x + e_T|^{L_1}  d\left(\mathcal{K}_{T,\alpha}*\big(Q^*_T - \Psi_{T,p,D} \big)\right)(x) \right|\\
&\lesssim& \sum_{\alpha; |\alpha| \le \Gamma}  \max_{|\beta| \le L_1 + d + 1} \int_{\reels^d} \left|\partial^{\beta}\left(\hat{\mathcal{K}}_{T,\alpha}(u)\big(H_T(u) - \hat{\Psi}_{T,p,D}(u) \big)\right)\right| du.
\end{eqnarray*}
Since $\hat{\mathcal{K}}_{T,\alpha}(u) = i^{-|\alpha|}\partial^{\alpha}\hat{\mathcal{K}}_{T}(u)$, we have $\mathop{\mathrm{supp}}\nolimits  \hat{\mathcal{K}}_{T,\alpha}(u) \subset \{ |u| < T^{\delta}\}$. Moreover, $\big|\partial^{\beta} \hat{\mathcal{K}}_{T,\alpha}(u)\big| \le \int_{\reels^d}|z|^{\alpha + \beta} d\mathcal{K}_T(z) \lesssim T^{-\delta(|\alpha| + |\beta|)}$ holds. Thus, from Proposition \ref{cf}, we can choose $\delta \in (0,1)$ and $\delta_0 > d\delta$ such that 
\begin{eqnarray*}
&&\max_{|\beta| \le L_1 + d + 1} \int_{\reels^d} \left|\partial^{\beta}\left(\hat{\mathcal{K}}_{T,\alpha}(u)\big(H_T(u) - \hat{\Psi}_{T,p,D}(u) \big)\right)\right| du\\
&\lesssim& \max_{|\beta_1|, |\beta_2| \le L_1 + d + 1} \int_{\{|u| < T^{\delta}\}} \left|\partial^{\beta_1}\hat{\mathcal{K}}_{T,\alpha}(u)\right| \left| \partial^{\beta_2}\big(H_T(u) - \hat{\Psi}_{T,p,D}(u) \big)\right| du\\
&\lesssim&  T^{-\frac{p-2}{2} - \delta_0 + d\delta} \ \lesssim \ T^{-\frac{p-2}{2}}.
\end{eqnarray*}
It means that
\begin{eqnarray}
\label{moment est}
\left| \sum_{\alpha; |\alpha| \le \Gamma} \frac{(-1)^{\alpha}}{\alpha!} \int_{\reels^d} \partial^{\alpha}f_{e_T}(x)  d\left(\mathcal{K}_{T,\alpha}*\big(Q^*_T - \Psi_{T,p,D} \big)\right)(x) \right| \ \lesssim \ T^{-\frac{p-2}{2}}.
\end{eqnarray}
On the other hand, from Proposition \ref{cumest_1}, $|\Sigma_{T,D}| \le T^{\ep}$ holds for any $\ep>0$. Therefore, $ \int_{\reels^d } |z|^{L_1} p_{T,p,D}(z)dz \le T^{\frac{L_5 \ep}{2}}$ for some constant $L_5 > 0$ which depends on $p$. Thus, by taking sufficiently small $\ep$,
\begin{eqnarray*}
&&\left| \int_{\reels^d \times \reels^d}  g_T^{-1}(z, u)   d\big(Q^*_T - \Psi_{T,p,D} \big)(z) d\mathcal{K}_T(u) \right|\\
&=& \left| \int_{\reels^d \times \reels^d} \sum_{\alpha; |\alpha| = \Gamma} (-u)^{\alpha} \frac{\Gamma}{\alpha!} \int_0^1 \nu^{\Gamma } \Big( \partial^{\alpha}f_{e_T}(z + \nu u) - \partial^{\alpha}f_{e_T}(z + u) \Big) d\nu d\big(Q^*_T - \Psi_{T,p,D} \big)(z) d\mathcal{K}_T(u) \right|\\
&\lesssim&  (T^{-\delta})^{\Gamma} \left(\sup_{\substack{|\alpha| \le \Gamma \\ x \in \reels^d}}\frac{\left| \partial^\alpha f(x) \right|}{1 + |x|^{L_1}}\right) \bigg| \int_{\reels^d \times \reels^d} \int_0^1u^{\alpha} \nu^{\Gamma } \Big( 1 + |z + T^{-\delta}u + e_T|^{L_1} \Big) d\nu d\big(Q^*_T - \Psi_{T,p,D} \big)(z) d\mathcal{K}(u) \bigg|\\
&\lesssim& T^{-\frac{p-1}{2}} \bigg| \int_{\reels^d } 1 + |z|^{L_1} d\big(Q^*_T - \Psi_{T,p,D} \big)(z) \bigg|
\ \lesssim \ T^{-\frac{p-1}{2}} \left( E\left[ |S^*_T|^{L_1}\right] +  \int_{\reels^d } |z|^{L_1} p_{T,p,D}(z)dz  \right)
\ \lesssim \ T^{-\frac{p-2}{2}}.
\end{eqnarray*}
In conclusion, we get $\Delta_2 \ \lesssim \ T^{-\frac{p-2}{2}}$ from (\ref{partation}), (\ref{moment est}) and the above inequality.

Finally, we consider $\Delta_3$. With the help of the mean value theorem, we can deduce as below; for some $\tau \in (0, 1)$,
\begin{eqnarray*}
\Delta_3
\le \int_{\reels^d} \left| f(z + e_T) - f(z)  \right| p_{T,p,D}(z)dz
&\le& \sum_{|\alpha| = 1} \int_{\reels^d} \left| \partial^{\alpha} f(z + \tau e_T) \right| |e_T| p_{T,p,D}(z)dz \\
&\lesssim& |e_T|  \left(\sup_{\substack{|\alpha| \le \Gamma \\ x \in \reels^d}}\frac{\left| \partial^\alpha f(x) \right|}{1 + |x|^{L_1}}\right) \int_{\reels^d} (1 + |z + e_T|^{L_1}) p_{T,p,D}(z)dz\\
&\lesssim& T^{-L_4 + \frac{L_5\ep}{2} } \ \lesssim \ T^{-\frac{p-2}{2}}.
\end{eqnarray*}

Therefore, we get the conclusion.
  \end{proof}


\subsection{ Proofs of Subsection 2.2}
Before prove Proposition \ref{tildeS_T}, we consider an asymptotic expansion of $\tilde{Z}_T$. We assume that $Z_T$ satisfies the conditions [A1] and [A2].  Then, from Theorem \ref{Main Thm}, for any $L_1, L_2>0$, there exist  $D>0$ and  $\Gamma \in \naturels$  such that for any $f \in \mathscr{E}(\Gamma, L_1, L_2)$,
\begin{eqnarray*}
\left| E\left[ f\left(\frac{Z_T}{\sqrt{T}}\right)\right] - \int_{\reels^d}f(z) p_{T,3,D}(z)dz \right| = o\left(T^{-1/2}\right),
\end{eqnarray*}
where
\begin{eqnarray*}
p_{T,3,D}(z) = \phi(z;\Sigma_{T,D}) + \frac{1}{6\sqrt{T}}\kappa^{a_1a_2a_3;}_T h_{a_1a_2a_3}(z;\Sigma_{T,D})\phi(z;\Sigma_{T,D}),
\end{eqnarray*}
for the modified cumulant $\kappa^{a_1a_2a_3;}_T$ and the Hermite polynomial $h_{a_1a_2a_3}(z;\Sigma_{T,D})$ defined in (\ref{modi cumulant}) and (\ref{Hermite}) respectively.
From the concrete form of $C_T$ and $M_{T, D}$, they are clearly non-degenerate and $f \circ M_{T, D} \circ C_T \in \mathscr{E}(\Gamma, L_1, L_2)$ holds for any $f \in \mathscr{E}(\Gamma, L_1, L_2)$. Owing to the variable transformation and the multi-linearity of the cumulant, the following inequality is immediately obtained. (See Proposition 7.1 in \cite{sakyos} for the proof details.)
\begin{lemma*}
\label{tildeZ exp}
Let $L_1, L_2>0$. Suppose that the conditions \textnormal{[A1]-[A3]} and  \textnormal{[B0]} hold. Then, there exist  $D>0$ and  $\Gamma \in \naturels$ such that for any $f \in \mathscr{E}(\Gamma,, L_1, L_2)$,
\begin{eqnarray*}
\left| E\left[ f\big(\tilde{Z}_T\big)\right] - \int_{\reels^d}f(z) \tilde{p}_{T,3,D}(z)dz \right| = o\left(T^{-1/2}\right),
\end{eqnarray*}
where $\tilde{\lambda}^{a_1a_2a_3;}_T$ is the $(a_1, a_2, a_3)$-cumulant of $\tilde{Z}_T$, $\tilde{\kappa}^{a_1a_2a_3;}_T = T^{1/2}\tilde{\lambda}^{a_1a_2a_3;}_T$ and
\begin{eqnarray*}
\tilde{p}_{T,3,D}(z) = \phi(z;\tilde{\Sigma}_{T,D}) + \frac{1}{6\sqrt{T}}\tilde{\kappa}^{a_1a_2a_3;}_T h_{a_1a_2a_3}(z;\tilde{\Sigma}_{T,D})\phi(z;\tilde{\Sigma}_{T,D}).
\end{eqnarray*}
\end{lemma*}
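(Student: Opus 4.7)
The plan is to reduce the assertion to Theorem \ref{Main Thm} applied to $Z_T$ itself by a linear change of variables. Writing
\[
\tilde Z_T \;=\; A_{T,D}\,(Z_T/\sqrt{T}), \qquad A_{T,D}:=M_{T,D}C_T,
\]
we have $f(\tilde Z_T)=g(Z_T/\sqrt{T})$ for $g:=f\circ A_{T,D}$. The strategy is to (i) verify that $g$ lies in the appropriate class $\mathscr E$, (ii) apply Theorem \ref{Main Thm} with $p=3$ to expand $E[g(Z_T/\sqrt{T})]$ against the density $p_{T,3,D}$, and (iii) recognize the push-forward of $p_{T,3,D}$ under $A_{T,D}$ as $\tilde p_{T,3,D}$.

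For (i), I would first check that $A_{T,D}$ has operator norm bounded uniformly for large $T$. Under [A3], $g_T\to g$ non-singular, so $g_T^{-1}$ is bounded; hence $\tilde g_T^{-1}=g_T^{-1}+T^{-D}(g_T^{-1})^2$ and its inverse $\tilde g_T$ stay bounded. The block $\bar\Sigma_T^{(2,1)}=\mathrm{Cov}[\bar Z_T^{(2)},\bar Z_T^{(1)}]$ is controlled by [A2] via Cauchy--Schwarz and the uniform moment bounds on $Z_t$; consequently $\|A_{T,D}\|\le K$ for some constant $K$ independent of $T$ (for $T$ large). A routine chain-rule estimate $|\partial^\alpha g(z)|\le C_\alpha\|A_{T,D}\|^{|\alpha|}L_2(1+\|A_{T,D}\||z|)^{L_1}$ then gives $g\in\mathscr E(\Gamma,L_1,L_2')$ with $L_2'=L_2 K^{\Gamma}(1+K)^{L_1}$ whenever $f\in\mathscr E(\Gamma,L_1,L_2)$. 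Choosing the $D$ and $\Gamma$ supplied by Theorem \ref{Main Thm} for the pair $(L_1,L_2')$, we obtain
\[
\Big|E[g(Z_T/\sqrt{T})] - \int_{\reels^d} g(z)\,p_{T,3,D}(z)\,dz\Big| \;=\; o(T^{-1/2}).
\]

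For (iii), I would substitute $y=A_{T,D}z$ in the integral. The Gaussian part transforms exactly: the standard identity
\[
|\det A_{T,D}|^{-1}\phi(A_{T,D}^{-1}y;\Sigma_{T,D}) \;=\; \phi\bigl(y;\,A_{T,D}\Sigma_{T,D}A_{T,D}'\bigr) \;=\; \phi(y;\tilde\Sigma_{T,D})
\]
holds by the definition $\tilde\Sigma_{T,D}=M_{T,D}C_T\Sigma_{T,D}C_T'M_{T,D}'$. For the cubic correction, multilinearity of cumulants yields
\[
\tilde\lambda_T^{b_1b_2b_3;} \;=\; (A_{T,D})^{b_1}_{a_1}(A_{T,D})^{b_2}_{a_2}(A_{T,D})^{b_3}_{a_3}\,\lambda_T^{a_1a_2a_3;},
\]
while differentiating the identity $\phi(y;\tilde\Sigma_{T,D})=|\det A_{T,D}|^{-1}\phi(A_{T,D}^{-1}y;\Sigma_{T,D})$ three times in $y$ and applying the chain rule gives
\[
h_{b_1b_2b_3}(y;\tilde\Sigma_{T,D}) \;=\; (A_{T,D}^{-1})^{a_1}_{b_1}(A_{T,D}^{-1})^{a_2}_{b_2}(A_{T,D}^{-1})^{a_3}_{b_3}\,h_{a_1a_2a_3}(A_{T,D}^{-1}y;\Sigma_{T,D}).
\]
Under the Einstein contraction the $A$ and $A^{-1}$ factors cancel, leaving $\tilde\kappa_T^{b_1b_2b_3;}h_{b_1b_2b_3}(y;\tilde\Sigma_{T,D})=\kappa_T^{a_1a_2a_3;}h_{a_1a_2a_3}(A_{T,D}^{-1}y;\Sigma_{T,D})$, and combining with the Gaussian factor yields $\tilde p_{T,3,D}(y)=|\det A_{T,D}|^{-1}p_{T,3,D}(A_{T,D}^{-1}y)$. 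The change-of-variables formula then gives $\int g(z)p_{T,3,D}(z)\,dz=\int f(y)\tilde p_{T,3,D}(y)\,dy$, and the lemma follows from step (ii).

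The only delicate point is the tensor bookkeeping for the Hermite transformation rule in (iii); all other ingredients---uniform bounds on $\|A_{T,D}\|$ under [A2]--[A3], the chain-rule estimate of $g$, and the invocation of Theorem \ref{Main Thm}---are routine, as indicated by the reference to Proposition 7.1 in \cite{sakyos}.
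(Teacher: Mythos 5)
Your proof is correct and follows essentially the same route as the paper: the paper likewise applies Theorem \ref{Main Thm} with $p=3$ to $Z_T/\sqrt{T}$, notes that $f \circ M_{T,D} \circ C_T$ remains in the class $\mathscr{E}$, and obtains the stated density via the variable transformation and the multilinearity of cumulants, deferring the detailed bookkeeping to Proposition 7.1 of \cite{sakyos}. The computations you spell out---the transformation of the Gaussian factor, of the cumulants, and of the Hermite polynomials under $A_{T,D} = M_{T,D}C_T$, with the contraction cancelling the $A$ and $A^{-1}$ factors---are precisely the details the paper leaves to that reference.
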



\begin{proof}[{\bf Proof of Proposition \ref{tildeS_T} }]
It is proved in the same way as the proof of Theorem 5.1 in \cite{sakyos} for
\begin{eqnarray*}
\tilde{q}_{T,3,D}(z^{(1)}) 
&=& \int_{\reels^{p^2}}\phi(z;\tilde{\Sigma}_{T,D})dz^{(2)} + \frac{1}{\sqrt{T}}\Bigg\{ \int_{\reels^{p^2}}\frac{1}{6}\tilde{\kappa}^{a_1a_2a_3;}_T h_{a_1a_2a_3}(z;\tilde{\Sigma}_{T,D})\phi(z;\tilde{\Sigma}_{T,D})dz^{(2)} \\
&&- \sum_{a=1,\dots,p} \frac{\partial}{\partial z^{a;}} \int_{\reels^{p^2}}\tilde{Q}_1^{a;}(z)\phi(z;\tilde{\Sigma}_{T,D})dz^{(2)} \Bigg\}.
\end{eqnarray*}
by using the Bhattacharya-Ghosh map and transforming asymptotic expansion in Lemma \ref{tildeZ exp}.
Thus, We only need to consider the form of $\tilde{q}_{T,3,D}$. Due to the orthogonalization, it immediately follows that
\begin{eqnarray*}
 h_A(z;\tilde{\Sigma}_{T,D})\phi(z;\tilde{\Sigma}_{T,D}) = h_{A^{(1)}}(z^{(1)}; \tilde{\Sigma}_{T,D}^{(1,1)})\phi(z^{(1)};\tilde{\Sigma}_{T,D}^{(1,1)})h_{A^{(2)}}(z^{(2)};\tilde{\Sigma}_{T,D}^{(2, 2)})\phi(z^{(2)};\tilde{\Sigma}_{T,D}^{(2, 2)})
\end{eqnarray*}
for $A^{(1)} = A\cap\{1,\dots,p\}$ and  $A^{(2)} = A\cap\{p+1,\dots,p+p^2\}$. We decompose the polynomial $\tilde{Q}_1^{a;}(z)$ by the Hermite polynomials. Let
\begin{eqnarray*}
\tilde{Q}_1^{a;}(z) = \pi^{a;}_{1,\phi}(z^{(1)}) + \pi^{a;}_{1,a_1}(z^{(1)})h^{a_1;}(z^{(2)};\tilde{\Sigma}_{T,D}^{(2, 2)}),
\end{eqnarray*}
where $h^{a;}(x;\sigma) = \sigma^{aa_1;}h_{a_1}(x;\sigma)$ for $\sigma = (\sigma^{ab;})_{a,b=1, \dots, p}$ and $\pi^{a;}_{1,\phi}(z^{(1)}), \pi^{a;}_{1,a_1}(z^{(1)})$ are polynomials for $z^{(1)}$. It is well known that the orthogonality of the Hermite polynomial
\begin{eqnarray*}
\int h^A(z^{(2)};\tilde{\Sigma}_{T,D}^{(2, 2)}) h_B(z^{(2)};\tilde{\Sigma}_{T,D}^{(2, 2)}) \phi(z^{(2)};\tilde{\Sigma}_{T,D}^{(2, 2)}) dz^{(2)} 
= \begin{cases}
    A! & \text{if $A=B$} \\
    0 & \text{otherwise.}
  \end{cases}
\end{eqnarray*}
This orthogonality gives the following representation of $\pi^{a;}_{1,\phi}(z^{(1)})$,
\begin{eqnarray*}
\pi^{a;}_{1,\phi}(z^{(1)}) 
= \int_{\reels^{p^2}} \tilde{Q}_1^{a;}(z) \phi(z^{(2)};\tilde{\Sigma}_{T,D}^{(2, 2)}) dz^{(2)}
= \tilde{\mu}^{a;}_{a_1a_2}z^{a_1;}z^{a_2;}
= \tilde{\mu}^{a;}_{a_1a_2}\tilde{g}^{a_1b_1;}\tilde{g}^{a_2b_2;}h_{b_1b_2}(z^{(1)};\tilde{\Sigma}_{T,D}^{(1,1)}) + \tilde{\mu}^{a;}_{a_1a_2}\tilde{g}^{a_1a_2;},
\end{eqnarray*}
where we used $h_{b_1b_2}(z^{(1)};\tilde{\Sigma}_{T,D}^{(1,1)}) = \tilde{g}_{b_1a_1}\tilde{g}_{b_2a_2}z^{a_1;}z^{a_2;} - \tilde{g}_{b_1b_2}$. Therefore, we get
\begin{eqnarray*}
&&\sum_{a=1,\dots,p} \frac{\partial}{\partial z^{a;}} \int_{\reels^{p^2}}\tilde{Q}_1^{a;}(z)\phi(z;\tilde{\Sigma}_{T,D})dz^{(2)}\\
&=& \sum_{a=1,\dots,p} \frac{\partial}{\partial z^{a;}} \int_{\reels^{p^2}} \left(\pi^{a;}_{1,\phi}(z^{(1)}) + \pi^{a;}_{1,a_1}(z^{(1)})h^{a_1;}(z^{(2)};\tilde{\Sigma}_{T,D}^{(2, 2)})\right)\phi(z^{(1)};\tilde{\Sigma}_{T,D}^{(1,1)})\phi(z^{(2)};\tilde{\Sigma}_{T,D}^{(2, 2)})dz^{(2)}\\
&=& -\left(\tilde{\mu}^{a;}_{a_1a_2}\tilde{g}^{a_1b_1;}\tilde{g}^{a_2b_2;}h_{b_1b_2a}(z^{(1)};\tilde{\Sigma}_{T,D}^{(1,1)}) + \tilde{\mu}^{a;}_{a_1a_2}\tilde{g}^{a_1a_2;}h_a(z^{(1)};\tilde{\Sigma}_{T,D}^{(1,1)})\right)\phi(z^{(1)};\tilde{\Sigma}_{T,D}^{(1,1)}).
\end{eqnarray*}
Thus, we can get the desired form of $\tilde{q}_{T,3,D}$.
 \end{proof}

\begin{proof}[{\bf Proof of Proposition \ref{evaluateR_2} }]
This proof is the almost same as Theorem 6.2. in \cite{sakyos}. Let $\gamma' \in \left( \frac{2}{3}, \gamma - \frac{L}{q_2} \right)$ and $\gamma'' \in  \left( \frac{L}{q_3}, 3\gamma - 2 \right)$. We set
\begin{eqnarray*}
\mathscr{X}_{T,0} = \Bigg\{ \omega \in \Omega &\Bigg|&
\inf_{\substack{T>0, |x|=1\\ \theta_1, \theta_2 \in \tilde{\Theta}}} \left| x' \int^1_0  \nu_{a b}\left(\theta_1 + s(\theta_2 - \theta_1)\right)ds\right| > C', \
\left| T^{-\frac{2-\gamma}{2}}l_{a_1} \right| < C', \ 
\sup_{\theta \in \Theta}\left| T^{-1}l_{a_1a_2}(\theta) - \nu_{a_1a_2}(\theta) \right| < \frac{C'}{2p^2}  \Bigg\}
\end{eqnarray*}
for some constant $C'>0$, and
\begin{eqnarray*}
\mathscr{X}_{T,1} = \left\{ \omega \in \Omega \ \left|  \
\left| T^{-1}l_{a_1a_2} - \nu_{a_1a_2} \right| < T^{-\frac{\gamma'}{2}}, \
\left| T^{-1}l_{a_1a_2a_3} - \nu_{a_1a_2a_3} \right| < T^{-\frac{\gamma'}{2}}, \ 
\sup_{\theta \in \Theta}\left| T^{-1}l_{a_1a_2a_3a_4}(\theta)\right| < T^{\frac{\gamma''}{2}} \right.\right\}.
\end{eqnarray*}
For appropriate $C'>0$ and sufficiently large $T$, it is known that there exists a unique $\mle \in \tilde{\Theta}$ such that $\partial_{\theta}l_T(\mle) = 0$ and $|\mle - \theta_0| < T^{-\frac{\gamma}{2}}$ on the set $\mathscr{X}_{T,0}$. In particular, $\mathscr{X}_{T,0} \subset \Omega_T$ holds for large $T$. Moreover, it is also proved that $P[(\mathscr{X}_{T,0})^c] \ \lesssim \ T^{-\frac{L}{2}}$.
Here, we used the conditions [B0], [B1], [B2] and [B3], see the proof of Theorem 6.1 in \cite{sakyos} for details. On the other hand, the conditions [B2] and [B4] lead
\begin{eqnarray*}
P[(\mathscr{X}_{T,1})^c] 
&\le& P\left[T^{\frac{\gamma}{2}}\left|T^{-1}l_{a_1a_2} - \nu_{a_1a_2} \right| \ge T^{-\frac{\gamma'}{2} + \frac{\gamma}{2}}\right] + P\left[T^{\frac{\gamma}{2}}\left|T^{-1}l_{a_1a_2a_3} - \nu_{a_1a_2a_3} \right| \ge T^{-\frac{\gamma'}{2} + \frac{\gamma}{2}}\right] \\
&&+ P\left[  \sup_{\theta \in \Theta}\left| T^{-1}l_{a_1a_2a_3a_4}(\theta)\right| \ge T^{\frac{\gamma''}{2}} \right] \\
&\lesssim& T^{-\frac{(\gamma - \gamma')q_2}{2}} + T^{-\frac{\gamma'' q_3}{2}} \ \lesssim \ T^{-\frac{L}{2}}.
\end{eqnarray*}
Since $g_T^{-1}$ converge to a non-singular matrix by the conditon [A3], we have $|Z^{a;}| \lesssim T^{-\frac{\gamma -1}{2}}$, $|Z^{a;}_{a_1}| \lesssim T^{-\frac{\gamma' -1}{2}}$, $|Z^{a;}_{a_1a_2}| \lesssim T^{-\frac{\gamma' -1}{2}}$ and $|\bar{\theta}| \lesssim T^{-\frac{\gamma -1}{2}}$ on $\mathscr{X}_{T,0} \cap \mathscr{X}_{T,1}$. Moreover, the conditon [B4] guarantees
\begin{eqnarray}
\label{nu 3}
\big| \nu^{a;}_{a_1 a_2} \big| \le \big| g^{ab;} \big| E\left[\left|T^{-1}l_{b,a_1 a_2}\right|\right] < \infty \quad \text{uniformly with respect to $T$}.
\end{eqnarray}
Hereafter, we consider the following inequalities on $\mathscr{X}_{T,0} \cap \mathscr{X}_{T,1}$. Let $a \in \{1, \dots, p\}$. First, we get 
\begin{eqnarray*}
\left|T^{-\frac{1}{2}} \bar{R}^{a;}_2 \right| &=& \left|T^{-\frac{1}{2}} \left( \frac{1}{2}Z^{a;}_{a_1 a_2}\bar{\theta}^{a_1 a_2;} +\frac{1}{2}\left\{\int^1_0(1-u)^2g^{ab;}\left( \frac{1}{T}l_{b a_1 a_2 a_3}\left(\theta_0 + u(\mle-\theta_0)\right)\right)du\right\} \bar{\theta}^{a_1 a_2 a_3;} \right) \right| \\
&\lesssim& T^{-\frac{1}{2} -\frac{\gamma' -1}{2} - (\gamma -1)} + T^{-\frac{1}{2} +\frac{\gamma''}{2}  - \frac{3(\gamma -1)}{2}} 
\ \lesssim \ T^{-\frac{\ep}{2}}
\end{eqnarray*}
for some small constant $0< \ep < \min(2\gamma + \gamma' -2, 3\gamma - \gamma'' -2)$.  Similarly, we have
\begin{eqnarray*}
\left| \bar{R}^{a;}_1 \right| &=& \left| Z^{a;}_{a_1}\bar{\theta}^{a_1;} + \frac{1}{2}\nu^{a;}_{a_1 a_2}\bar{\theta}^{a_1 a_2;} + T^{-\frac{1}{2}}\bar{R}^{a;}_2 \right| 
\ \lesssim \ T^{ -\frac{\gamma' -1}{2} -\frac{\gamma -1}{2} } + T^{- (\gamma -1)} + T^{-\frac{\ep}{2}}
\ \lesssim \ T^{-\frac{\ep'}{2}}
\end{eqnarray*}
for a positive constant $0 < \ep' <  \min(\gamma + \gamma' -2, \ep)$. Finally we have
\begin{eqnarray*}
\left|T^{-\frac{1}{2}} \check{R}^{a;}_2 \right| &=& \left|T^{-\frac{1}{2}} \left( Z^{a;}_{a_1}\bar{R}^{a_1; }_1 + \bar{R}^{a;}_2 \right)  + T^{-1}\left(\frac{1}{2}\nu^{a;}_{a_1 a_2}\bar{R}^{a_1; }_1\bar{R}^{a_2; }_1 \right)  \right| 
\ \lesssim \ T^{-\frac{1}{2} -\frac{\gamma' -1}{2} -\frac{\ep'}{2}} + T^{-\frac{\ep}{2}} + T^{-1 -\ep'} 
\ \lesssim \ T^{-\frac{\ep'}{2}}.
\end{eqnarray*}
Therefore, we get the desired conclusion
\begin{eqnarray*}
P\left[\Omega_T \cap \left\{ T^{-1}|\check{R}_2^{a;}| \le CT^{-\frac{1 + \ep'}{2}}, \ a = 1, \dots, p \right\} \right] \ \ge \ P[\mathscr{X}_{T,0} \cap \mathscr{X}_{T,1} ] = 1 - o(T^{-\frac{L}{2}}).
\end{eqnarray*}
  \end{proof}


\begin{proof}[{\bf Proof of Theorem \ref{Main Thm2 New} }]
From Proposition \ref{tildeS_T}, we see that
\begin{eqnarray*}
&& \left| E\left[ f\big(\sqrt{T}( \mle - \theta_0 )\big)\right] - \int_{\reels^d}f(z^{(1)}) q_{T,3}(z^{(1)})dz^{(1)} \right| \\
&\lesssim& \left| E\bigg[ \right\{ f\big(\sqrt{T}( \mle - \theta_0 )\big) - f\big( \tilde{S}_T\big) \left\}1_{\Omega_T^c}\bigg] \right| + \left| E\left[ \left\{ f\big( \tilde{S}_T + T^{-1}\check{R}_2\big) - f\big( \tilde{S}_T\big) \right\} 1_{\Omega_T \cap \left\{T^{-1}|\check{R}_2^{a;}| > CT^{-\frac{1 + \ep'}{2}}\right\}}\right] \right| \\
&&+ \left| E\left[ \left\{ f\big( \tilde{S}_T + T^{-1}\check{R}_2 )\big) - f\big( \tilde{S}_T\big) \right\} 1_{\Omega_T \cap \left\{T^{-1}|\check{R}_2^{a;}| \le CT^{-\frac{1 + \ep'}{2}}\right\}} \right] \right| \\
&&+\left| \int_{\reels^d}f(z^{(1)}) \tilde{q}_{T,3,D}(z^{(1)})dz^{(1)} - \int_{\reels^d}f(z^{(1)}) q_{T,3}(z^{(1)})dz^{(1)} \right|
+T^{-\frac{1}{2}} \\
&=:& \Delta_1 + \Delta_2 + \Delta_3 + \Delta_4 + T^{-\frac{1}{2}}.
\end{eqnarray*}
From the definition of $\tilde{S}_T$ and the representation of (\ref{main form of MLE}),
\begin{eqnarray*}
\big\| \tilde{S}_T \big\|_{L^k(P)} 
\le \sum_{a=1,\dots, p} \bigg\| Z^{a;} +T^{-\frac{1}{2}}Z^{a;}_{a_1}Z^{a_1; } + \frac{1}{2}T^{-\frac{1}{2}} \nu^{a;}_{a_1 a_2}Z^{a_1; }Z^{a_2; } \bigg\|_{L^k(P)}.
\end{eqnarray*}
By Corollary \ref{cumest_3} and the representation of moments by cumulants, we have 
\begin{eqnarray*}
\big\| T^{-\frac{1}{2}}Z_T \big\|_{L^k(P)}
\ \lesssim \ T^{\ep k}
\end{eqnarray*}
for any $\ep > 0$ and $k>0$. From (\ref{nu 3}) and the above inequality, $\big\| \tilde{S}_T \big\|_{L^k(P)} \ \lesssim \ T^{\ep k}$ holds for any $\ep > 0$ and $k>0$. Thus, the condition [C1] yields $\big\| T^{-1}\check{R}_2 \big\|_{L^k(P)} \ \lesssim \ T^{\ep k}$ for any $\ep > 0$ and $k>0$. 

We evaluate $\Delta_1, \Delta_2, \Delta_3$ and $\Delta_4$. From Proposition \ref{evaluateR_2} and by choosing sufficiently small $\ep$, we get
\begin{eqnarray*}
\Delta_1
\ \lesssim \ E\left[\left(1 +  \big|\tilde{S}_T\big| + \big|\sqrt{T}( \mle - \theta_0 )\big| \right)^{L_1} 1_{\Omega_T^c}\right]
\ \lesssim \ T^{-\frac{1}{2}}.
\end{eqnarray*}
Similarly,
\begin{eqnarray*}
\Delta_2
\ \lesssim \  E\left[\left(1 +  \big|\tilde{S}_T\big| + \big|T^{-1}\check{R}_2\big| \right)^{L_1} 1_{\Omega_T \cap \left\{T^{-1}|\check{R}_2^{a;}| > CT^{-\frac{1 + \ep'}{2}}\right\}}\right]
\ \lesssim \ T^{-\frac{1}{2}}.
\end{eqnarray*}
On the other hand, from the Taylor expansion, we have
\begin{eqnarray*}
f\big( \tilde{S}_T + T^{-1}\check{R}_2\big) - f\big( \tilde{S}_T\big)
= \sum_{|\alpha|=1}T^{-1}\check{R}_2\int_0^1\partial^{\alpha}f\big( \tilde{S}_T + uT^{-1}\check{R}_2\big)du
\ \lesssim \ T^{-1}\big|\check{R}_2\big|\left(1 + \big|\tilde{S}_T\big| + \big|T^{-1}\check{R}_2\big| \right)^{L_1}.
\end{eqnarray*}
Thus, we get 
\begin{eqnarray*}
\Delta_3
\ \lesssim \ E\left[T^{-1}\big|\check{R}_2\big|\left(1 +  \big|\tilde{S}_T\big| + \big|T^{-1}\check{R}_2\big| \right)^{L_1} 1_{\left\{T^{-1}|\check{R}_2^{a;}| \le CT^{-\frac{1 + \ep'}{2}}\right\}}\right]
\ \lesssim \ T^{-\frac{1}{2} - \frac{\ep'}{2} + \ep L_1} \ \lesssim \ T^{-\frac{1}{2}},
\end{eqnarray*}
since we can choose small $\ep$ arbitrary. Finally, we only have to show that $\Delta_4 \lesssim T^{-\frac{1}{2}}$. From the definition of $f$,
\begin{eqnarray}
\label{main ineq}
\Delta_4
&\lesssim& \int_{\reels^d} (1+|z^{(1)}|)^{L1} \left| \tilde{q}_{T,3,D}(z^{(1)}) - q_{T,3}(z^{(1)}) \right|dz^{(1)} \nonumber\\
&\lesssim& \int_{\reels^d} (1+|z^{(1)}|)^{L1} \left| \frac{\tilde{q}_{T,3,D}(z^{(1)})}{\phi(z^{(1)}; \tilde{g}_T^{-1})} - \frac{q_{T,3}(z^{(1)})}{\phi(z^{(1)}; g_T^{-1})} \right| \phi(z^{(1)}; \tilde{g}_T^{-1}) dz^{(1)} \nonumber\\
&& + \int_{\reels^d} (1+|z^{(1)}|)^{L1} \left| \frac{q_{T,3}(z^{(1)})}{\phi(z^{(1)}; g_T^{-1})} \left( 1 - \frac{\phi(z^{(1)}; g_T^{-1})}{\phi(z^{(1)}; \tilde{g}_T^{-1})} \right) \right| \phi(z^{(1)}; \tilde{g}_T^{-1}) dz^{(1)}.
\end{eqnarray}
We see that $\tilde{g}_T - g_T = ( I - g_T\tilde{g}_T^{-1})\tilde{g}_T =  T^{-D}g_T^{-1}\tilde{g}_T$. Since $(g_T^{-1}\tilde{g}_T)^{-1} = I + T^{-D}g_T^{-1}$ is positive definite, $g_T^{-1}\tilde{g}_T$ is also positive definite. With the help of the conditons [A2]-[A3] and Corollary \ref{cumest_3}, we can choose a sufficiently large $K>0$ such that
\begin{eqnarray}
\label{sub ineq 1}
 \left| \frac{\tilde{q}_{T,3,D}(z^{(1)})}{\phi(z^{(1)}; \tilde{g}_T^{-1})} - \frac{q_{T,3}(z^{(1)})}{\phi(z^{(1)}; g_T^{-1})} \right| 
&=& \frac{1}{\sqrt{T}}\Bigg| \Bigg\{ \left(\frac{1}{6}\tilde{\kappa}^{a_1a_2a_3;}_T + \tilde{\mu}^{a_3;}_{b_1b_2}\tilde{g}^{b_1a_1;}\tilde{g}^{b_2a_2;}\right)h_{a_1a_2a_3}(z^{(1)}; \tilde{g}_T^{-1}) + \tilde{\mu}^{a_1;}_{b_1b_2}\tilde{g}^{b_1b_2;}h_{a_1}(z^{(1)}; \tilde{g}_T^{-1}) \Bigg\} \nonumber\\
&&- \Bigg\{ \left(\frac{1}{6}\tilde{\kappa}^{a_1a_2a_3;}_T + \mu^{a_3;}_{b_1b_2}g^{b_1a_1;}g^{b_2a_2;}\right)h_{a_1a_2a_3}(z^{(1)}; g_T^{-1}) + \mu^{a_1;}_{b_1b_2}g^{b_1b_2;}h_{a_1}(z^{(1)}; g_T^{-1}) \Bigg\}\Bigg| \nonumber\\
&\lesssim& T^{-D}(1 + |z^{(1)}| )^K,
\end{eqnarray}
and
\begin{eqnarray}
\label{sub ineq 2}
\left| \frac{q_{T,3}(z^{(1)})}{\phi(z^{(1)}; g_T^{-1})} \right|
&\lesssim& (1 + |z^{(1)}| )^K.
\end{eqnarray}
On the other hand, we obtain
\begin{eqnarray}
\label{sub ineq 3}
\left| 1 - \frac{\phi(z^{(1)}; g_T^{-1})}{\phi(z^{(1)}; \tilde{g}_T^{-1})} \right| 
&=& \left| 1 -\sqrt{\frac{|\tilde{g}_T^{-1}|}{|g_T^{-1}|}} \exp \left( -\frac{1}{2} z^{(1)'} \left( g_T - \tilde{g}_T \right)z^{(1)} \right)  \right| \nonumber\\
&\le& \left| 1 - \sqrt{\frac{|\tilde{g}_T^{-1}|}{|g_T^{-1}|}} \right| +  \sqrt{\frac{|\tilde{g}_T^{-1}|}{|g_T^{-1}|}} \left| 1 -  \exp \left( -\frac{T^{-D}}{2} z^{(1)'} g_T^{-1}\tilde{g}_T z^{(1)} \right) \right| \nonumber\\
&\le& \left| 1 - \sqrt{\frac{|g_T^{-1} + T^{-D}(g_T^{-1})^2|}{|g_T^{-1}|}}\right| + \sqrt{\frac{|g_T^{-1} + T^{-D}(g_T^{-1})^2|}{|g_T^{-1}|}}T^{-D} |z^{(1)'} g_T^{-1}\tilde{g}_T z^{(1)}| \nonumber\\
&\lesssim& T^{-\frac{D}{2}} +T^{-D} |z^{(1)}|^2.
\end{eqnarray}
From  (\ref{main ineq}), (\ref{sub ineq 1}),  (\ref{sub ineq 2}) and (\ref{sub ineq 3}), we get the conclusion by taking sufficiently large $D>0$.
  \end{proof}


\subsection{ Proofs of Subsection 3.2}
Throughout this subsection, denote the $i$-th jump time of $N^x_t$ by $\tau^x_i$.

\begin{proof}[{\bf Proof of Proposition \ref{strong moment}}]
Let $M_1, K_1$ and $K_2$ be positive constants and $\mathscr{A}$ be the operator in Lemma \ref{generator}. Define $g(y, t)$ and $\bar{\mathscr{A}}$ by $g(y, t) = e^{M_1y}e^{K_1t}$ and $\bar{\mathscr{A}}g(y, t) = e^{K_1t}(\mathscr{A}e^{M_1y} + K_1e^{M_1y})$. From Lemma \ref{generator},
\begin{eqnarray}
\label{5B}
\bar{\mathscr{A}}g(y, t) 
\le e^{K_1t}(-K_1e^{M_1y} + K_2 + K_1e^{M_1y})
= e^{K_1t}K_2.
\end{eqnarray}
Since $\lambda^x_{t \wedge \tau^x_i}$ is bounded, thus $g(\lambda^x_{t \wedge \tau^x_i}, t \wedge \tau^x_i) =e^{M_1 \lambda^x_{t \wedge \tau^x_i}}e^{K_1(t \wedge \tau^x_i)}$ is integrable. Furthermore, one may get
\begin{eqnarray}
\label{5A}
g(\lambda^x_{t \wedge \tau^x_i}, t \wedge \tau^x_i) - g(\lambda^x_0, 0) 
&=&\int_{(0,t \wedge \tau^x_i]} g(\lambda^x_s + \alpha, s) - g(\lambda^x_s, s) dN^x_s +  \int_{(0,t \wedge \tau^x_i]}  \frac{d}{ds}  g(\lambda^x_s, s) ds \nonumber\\
&=& \int_{(0, t \wedge \tau^x_i]} g(\lambda^x_s + \alpha, s) -g(\lambda^x_s, s) d\tilde{N}^x_s + \int_{(0, t \wedge \tau^x_i]} \bar{\mathscr{A}}g(\lambda^x_s, s)ds.
\end{eqnarray}
Since $\int_{(0, t]} g(\lambda^x_s + \alpha, s) - g(\lambda^x_s, s) d\tilde{N}^x_s$ is a $\tau^x_i$-local martingale, see Theorem 18.7 in \cite{LS}, (\ref{5B}) and (\ref{5A}) yield
\begin{eqnarray*}
E\left[ g(\lambda^x_{t\wedge \tau^x_i}, t \wedge \tau^x_i) \right] 
&=& g(x, 0) + E\left[ \int_{(0, t \wedge \tau^x_i]} \bar{\mathscr{A}} g(\lambda^x_s, s) ds\right] \\
&\le& g(x, 0) + E\left[ \int_{(0, t \wedge \tau^x_i]} e^{K_1t}K_2 ds\right] \le g(x, 0) + \frac{K_2}{K_1}\left( e^{K_1t} - 1\right).
\end{eqnarray*}
Then, by the Fatou's lemma, we have
\begin{eqnarray*}
E\left[e^{M_1\lambda^x_t}\right] \le e^{-K_1t}\left(g(x, 0) + \frac{K_2}{K_1}\left( e^{K_1t} - 1\right)\right).
\end{eqnarray*}
Thus, we get the conclusion.
  \end{proof}

\begin{proof}[{\bf Proof of Lemma \ref{P in DomA}}]
Since $\mathscr{A}$ is linear, we only have to prove that, for $p(y) = y^m$ with $m \in \naturels$, $M^p_t = p(\lambda^x_t) - p(\lambda^x_0) - \int_{(0,t]} \mathscr{A}p(\lambda^x_s) ds$ is a $\mathscr{F}^x_t$-martingale. Take any large $T>0$. In the same way as (\ref{5A}) in the proof of Proposition \ref{strong moment}, one may confirm that $M^p_t = \int_0^{t} 1_{\{s\le T\}}(\lambda^x_s + \alpha)^m - (\lambda^x_s)^m d\tilde{N}^x_s$ for $t \le T$. Then, Theorem 18.7 in \cite{LS} and Proposition \ref{strong moment} lead the conclusion.
  \end{proof}

\begin{proof}[{\bf Proof of Lemma \ref{Remainder}}]
Let $p(y) = a_my^m + \cdots a_1y + a_0$, where $a_0, \dots, a_m \in \reels$ and $m \in \naturels$. Then, the linearity of $\mathscr{A}$ leads
\begin{eqnarray*}
&&E\left[ \left|\int_{(s,t]}\int_{(s,u_1]}\cdots\int_{(s,u_{n-1}]}E\left[\left.\mathscr{A}^np(\lambda^x_{u_n})\right| \mathscr{F}^x_s\right]du_n\dots du_2du_1\right| \right]\\
&\le& \int_{(s,t]}\int_{(s,u_1]}\cdots\int_{(s,u_{n-1}]}E\left[ \left|\mathscr{A}^np(\lambda^x_{u_n})\right|\right]du_n\dots du_2du_1\\
&\le&\sum_{k=1}^m |a_k|\int_{(s,t]}\int_{(s,u_1]}\cdots\int_{(s,u_{n-1}]}E\left[ \left|\mathscr{A}^n(\lambda^x_{u_n})^k\right|\right]du_n\dots du_2du_1.
\end{eqnarray*}
Therefore, we only have to evaluate $\int_{(s,t]}\int_{(s,u_1]}\cdots\int_{(s,u_{n-1}]}E\left[ \left|\mathscr{A}^n(\lambda^x_{u_n})^k\right|\right]du_n\dots du_2du_1$ for any $k \in \naturels$. In particular, 
\begin{eqnarray*}
\int_{(s,t]}\int_{(s,u_1]}\cdots\int_{(s,u_{n-1}]}E\left[ \left|\mathscr{A}^n(\lambda^x_{u_n})^k\right|\right]du_n\dots du_2du_1
\le \frac{(t-s)^n}{n!}\sup_{u\in(s,t]}E\left[ \left|\mathscr{A}^n(\lambda^x_u)^k\right|\right],
\end{eqnarray*}
thus it is enough to prove that the above right hand side converges to zero as $n \to \infty$. 
There exist constants $C_i$, $i = 0,\dots, k$ such that $\mathscr{A}y^k = C_ky^k + \cdots + C_1y + C_0$. Then, we inductively get
\begin{eqnarray}
\label{5C}
\mathscr{A}^ny^k
&=& \mathscr{A}^{n-1}(\mathscr{A}y^k) 
= C_k\mathscr{A}^{n-2}(\mathscr{A}y^k)  +\sum_{i=1}^{k-1}C_i\mathscr{A}^{n-1}y^i  \nonumber\\
&=& C_k^2\mathscr{A}^{n-3}(\mathscr{A}y^k) + C_k\sum_{i=1}^{k-1}C_i\mathscr{A}^{n-2}y^i  + \sum_{i=1}^{k-1}C_i\mathscr{A}^{n-1}y^i   \nonumber\\
&=&\cdots= C_k^{n-1}\mathscr{A}y^k + C_k^{n-2}\sum_{i=1}^{k-1}C_i\mathscr{A}y^i + C_k^{n-3}\sum_{i=1}^{k-1}C_i\mathscr{A}^2y^i + \cdots +\sum_{i=1}^{k-1}C_i\mathscr{A}^{n-1}y^i \nonumber\\
&=& C_k^ny^k +\sum_{i=1}^{k-1} C_i\left(C_k^{n-1}y^i + C_k^{n-2}\mathscr{A}y^i + \cdots + \mathscr{A}^{n-1}y^i\right) + C_0C_k^{n-1}.
\end{eqnarray}
Hence,
\begin{eqnarray*}
\sup_{u\in(s,t]}E\left[ \left|\mathscr{A}^n(\lambda^x_u)^k\right|\right]
&\le& C_k^n\sup_{u\in(s,t]}E\left[ \left|(\lambda^x_u)^k\right|\right]\\
&&+ \sum_{i=1}^{k-1}C_i\sup_{u\in(s,t]}E\left[\left|C_k^{n-1}(\lambda^x_u)^i + C_k^{n-2}\mathscr{A}(\lambda^x_u)^i + \cdots + \mathscr{A}^{n-1}(\lambda^x_u)^i\right|\right] +C_0C_k^{n-1}.
\end{eqnarray*}
Furthermore, one may concretely compute as $C_k = k(\alpha - \beta)$. Now, we introduce the following assumption.
\begin{assk}
For any $i= 0, \dots, k-1$ and $C = j(\alpha-\beta)$ with $j = k, k+1, \cdots$,
\begin{eqnarray*}
\frac{(t-s)^n}{n!}\sup_{u\in(s,t]}E\left[\left|C^{n-1}(\lambda^x_u)^i + C^{n-2}\mathscr{A}(\lambda^x_u)^i + \cdots + \mathscr{A}^{n-1}(\lambda^x_u)^i\right|\right] \to 0 \quad \text{as $n \to \infty$}.
\end{eqnarray*}
\end{assk}
Proposition \ref{strong moment} guarantees $\sup_{u\in(s,t]}E\left[ \left|(\lambda^x_u)^k\right|\right] < \infty$. Thus, if ASS(k) holds, by taking $C = k(\alpha-\beta)$ in ASS(k), we have
\begin{eqnarray*}
\frac{(t-s)^n}{n!}\sup_{u\in(s,t]}E\left[ \left|\mathscr{A}^n(\lambda^x_u)^k\right|\right] \to 0.
\end{eqnarray*}
We prove that ASS(k) holds for any $k \in \naturels$ by induction. In the case of $k=1$, this assumption is obvious. Assume that ASS(k) holds. Again we denote $\mathscr{A}y^k = C_ky^k + \cdots + C_1y + C_0$. For $C = j(\alpha-\beta)$ with $j = k+1, k+2, \cdots$, by using the equation (\ref{5C}), we have
\begin{eqnarray*}
\label{5D}
&&C^{n-1}y^k + C^{n-2}\mathscr{A}y^k + \cdots + \mathscr{A}^{n-1}y^k\\
&=& C^{n-1}y^k + C^{n-2}\left\{C_ky^k + \sum_{i=1}^{k-1}C_iy^i + C_0\right\}+ C^{n-3}\left\{C_k^2y^k +  \sum_{i=1}^{k-1}C_i(C_ky^{i} + \mathscr{A}y^{i}) + C_0C_k\right\}\\
&&+ C^{n-4}\left\{C_k^3y^k+ \sum_{i=1}^{k-1}C_i(C_k^2y^i +C_k\mathscr{A}y^i+ \mathscr{A}^2y^i) + C_0C_k^2\right\}\\
&& +\cdots+\left\{C_k^{n-1}y^k +\sum_{i=1}^{k-1} C_i\left(C_k^{n-2}y^i + C_k^{n-3}\mathscr{A}y^i + \cdots + \mathscr{A}^{n-2}y^i\right) +C_0C_k^{n-2}\right\}\\
&=& \sum_{i=0}^{n-1}C^iC_k^{n-1-i}y^k+ C_{k-1}\sum_{j=0}^{n-2}\left(\sum_{i=0}^{n-2-j}C^iC_k^{n-2-i-j}\right)\mathscr{A}^{j}y^{k-1} + C_{k-2}\sum_{j=0}^{n-2}\left(\sum_{i=0}^{n-2-j}C^iC_k^{n-2-i-j}\right)\mathscr{A}^{j}y^{k-2}\\
&&+ \cdots+ C_{1}\sum_{j=0}^{n-2}\left(\sum_{i=0}^{n-2-j}C^iC_k^{n-2-i-j}\right)\mathscr{A}^{j}y + C_0\left(\sum_{i=0}^{n-2}C^iC_k^{n-2-i}\right)\\
&=& \frac{C^n-C_k^n}{C-C_k}y^k+ C_{k-1}\sum_{j=0}^{n-2}\frac{C^{n-1-j}-C_k^{n-1-j}}{C-C_k}\mathscr{A}^{j}y^{k-1} + C_{k-2}\sum_{j=0}^{n-2}\frac{C^{n-1-j}-C_k^{n-1-j}}{C-C_k}\mathscr{A}^{j}y^{k-2}\\
&&+ \cdots+ C_{1}\sum_{j=0}^{n-2}\frac{C^{n-1-j}-C_k^{n-1-j}}{C-C_k}\mathscr{A}^{j}y + C_0\frac{C^{n-1}-C_k^{n-1}}{C-C_k}\\
&=&\frac{1}{C-C_k}\Bigg\{ (C^n-C_k^n)y^k +C_{k-1}\Bigg\{\sum_{i=0}^{n-1}C^i\mathscr{A}^{n-1-i}y^{k-1}- \sum_{i=0}^{n-1}C_k^i\mathscr{A}^{n-1-i}y^{k-1}\Bigg\} \\
&& +C_{k-2}\Bigg\{\sum_{i=0}^{n-1}C^i\mathscr{A}^{n-1-i}y^{k-2}- \sum_{i=0}^{n-1}C_k^i\mathscr{A}^{n-1-i}y^{k-2}\Bigg\} + \cdots +C_1\Bigg\{\sum_{i=0}^{n-1}C^i\mathscr{A}^{n-1-i}y- \sum_{i=0}^{n-1}C_k^i\mathscr{A}^{n-1-i}y\Bigg\}+C_0(C^{n-1}-C_k^{n-1})\Bigg\}.
\end{eqnarray*}
Therefore,
\begin{eqnarray*}
&&\frac{(t-s)^n}{n!}\sup_{u\in(s,t]}E\left[\left|C^{n-1}(\lambda^x_u)^k + C^{n-2}\mathscr{A}(\lambda^x_u)^k + \cdots + \mathscr{A}^{n-1}(\lambda^x_u)^k\right|\right] \\
&\le& \frac{(t-s)^n}{n!}\left|\frac{1}{C-C_k}\right|\Bigg\{\left|C^n-C_k^n\right|\sup_{u\in(s,t]}E\left[(\lambda^x_u)^k\right] \\
&& \quad +\left|C_{k-1}\right|\left\{\sup_{u\in(s,t]}E\left[\left|\sum_{i=0}^{n-1}C^i\mathscr{A}^{n-1-i}(\lambda^x_u)^{k-1} \right|\right] +\sup_{u\in(s,t]}E\left[\left|\sum_{i=0}^{n-1}C_k^i\mathscr{A}^{n-1-i}(\lambda^x_u)^{k-1} \right|\right]\right\}\\
&& \quad +\left|C_{k-2}\right|\left\{\sup_{u\in(s,t]}E\left[\left|\sum_{i=0}^{n-1}C^i\mathscr{A}^{n-1-i}(\lambda^x_u)^{k-2} \right|\right] +\sup_{u\in(s,t]}E\left[\left|\sum_{i=0}^{n-1}C_k^i\mathscr{A}^{n-1-i}(\lambda^x_u)^{k-2} \right|\right]\right\}\\
&& \quad+\cdots+ \left|C_1\right|\left\{\sup_{u\in(s,t]}E\left[\left|\sum_{i=0}^{n-1}C^i\mathscr{A}^{n-1-i}\lambda^x_u\right|\right]+\sup_{u\in(s,t]}E\left[\left|\sum_{i=0}^{n-1}C_k^i\mathscr{A}^{n-1-i}\lambda^x_u \right|\right]\right\}+\left|C_0(C^{n-1}-C_k^{n-1})\right|\Bigg\}.
\end{eqnarray*}
Hence, ASS(k) leads ASS(k+1) and we have completed the proof.
  \end{proof}

To prove Theorem \ref{Markovian property}, we prepare the following lemmas.

\begin{lemma*}
\label{exp markov}
For any $u \in \reels$ and $t \ge s \ge 0$, $E[e^{iu\lambda^x_t}| \mathscr{F}^x_s] = E[e^{iu\lambda^x_t}| \lambda^x_s] \ \ a.s.$
\end{lemma*}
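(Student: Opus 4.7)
The strategy is to bootstrap the polynomial Markov property (\ref{Markov for polynomial}) up to the bounded exponential $y\mapsto e^{iuy}$ via Taylor approximation, with Proposition \ref{strong moment} supplying the dominating function that legitimises the exchange of limit and conditional expectation. Write $M_1>0$ for the constant from Proposition \ref{strong moment}, so that $\sup_{t\in\reels_+}E[e^{M_1\lambda^x_t}]<\infty$.

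First I would handle the range $|u|<M_1$. Put $p_n(y)=\sum_{k=0}^n(iuy)^k/k!$; each $p_n$ is a polynomial, so (\ref{Markov for polynomial}) gives $E[p_n(\lambda^x_t)\mid\mathscr{F}^x_s]=E[p_n(\lambda^x_t)\mid\lambda^x_s]$ almost surely. Because $p_n(\lambda^x_t)\to e^{iu\lambda^x_t}$ pointwise and $|p_n(\lambda^x_t)|\le e^{|u|\lambda^x_t}\le e^{M_1\lambda^x_t}$, which is integrable by Proposition \ref{strong moment}, conditional dominated convergence applied to both conditionings yields $E[e^{iu\lambda^x_t}\mid\mathscr{F}^x_s]=E[e^{iu\lambda^x_t}\mid\lambda^x_s]$ a.s.\ for each such $u$.

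For the remaining range $|u|\ge M_1$ I would use analyticity of the (conditional) moment generating function. Because $\lambda^x_t\ge 0$, the estimate $|e^{z\lambda^x_t}|\le e^{|\operatorname{Re}z|\lambda^x_t}$ combined with Proposition \ref{strong moment} shows that both $z\mapsto E[e^{z\lambda^x_t}\mid\mathscr{F}^x_s]$ and $z\mapsto E[e^{z\lambda^x_t}\mid\lambda^x_s]$ extend to holomorphic functions of $z$ on the strip $\{|\operatorname{Re}z|<M_1\}$. The previous step forces equality on the imaginary segment $\{iu:|u|<M_1\}$, and the identity theorem promotes this to equality throughout the strip; specialising to $z=iu$, which sits on the imaginary axis and therefore inside the strip for every real $u$, completes the proof.

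The chief obstacle is rendering the analytic-continuation step measurable simultaneously in $\omega$: conditional expectations are defined only up to $P$-null sets, so one has to exhibit, on a single set of full measure, joint holomorphy in $z$ of both sides. This can be accomplished either by passing to a regular conditional distribution of $\lambda^x_t$ given $\mathscr{F}^x_s$ (which exists as $\lambda^x_t$ is real-valued) so that the two maps become genuine Laplace transforms of the same finite random measure, or by iterating the Taylor-expansion argument of the preceding paragraph at a countable dense sequence of real base points in $(-M_1,M_1)$, at each stage using Proposition \ref{strong moment} to guarantee a radius of convergence of at least $M_1$ so that finitely many steps reach any prescribed $z=iu$. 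In either approach, the uniform-in-$t$ exponential moments of Proposition \ref{strong moment} are precisely the input that converts the formal analyticity into a usable almost-sure statement.
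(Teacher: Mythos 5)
Your proposal is correct and rests on the same two pillars as the paper's proof: the polynomial Markov property (\ref{Markov for polynomial}) combined with the exponential moment bound of Proposition \ref{strong moment} to pass from polynomials to the exponential by dominated convergence on a small parameter range, followed by analytic continuation via the identity theorem to reach all real $u$. The difference is organizational, and it sits exactly at the point you flag as the chief obstacle. You carry out the continuation at the level of the conditional expectations themselves, so you must exhibit, on a single full-measure event, holomorphy in $z$ of two random functions together with their agreement on a set having an accumulation point; this forces you into regular conditional distributions (note, incidentally, that these are Laplace transforms of two \emph{different} random measures whose coincidence is precisely what is to be proved, not of ``the same'' measure) or into an iterated Taylor scheme, the latter being more than is needed since equality already holds on a whole segment of the imaginary axis. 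The paper dissolves the obstacle by dualizing first: it fixes a bounded $\mathscr{F}^x_s$-measurable $g$ and compares the two \emph{deterministic} functions $z\mapsto E[e^{z\lambda^x_t}g]$ and $z\mapsto E\big[e^{z\lambda^x_t}E[g|\lambda^x_s]\big]$, which are shown to be holomorphic on $\{\operatorname{Re}z<M_1/2\}$ by a Cauchy--Riemann plus dominated-convergence check, shown to agree on the real interval $(-M_1/2,M_1/2)$ by the polynomial argument, and hence agree at every $z=iu$ by the ordinary identity theorem; letting $g$ range over all bounded $\mathscr{F}^x_s$-measurable functions then yields the conditional statement with no measurability issue at all. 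Your route can be made rigorous along the lines you sketch, but the test-function reduction is the cleaner way through, and your first step (conditional dominated convergence for $|u|<M_1$) is already fully rigorous as written.
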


\begin{proof}
Fix $s$ and $t$ with $t \ge s \ge 0$. It is sufficient to show that for any bounded $\mathscr{F}^x_s$-measurable function $g : \Omega \to \reels$,
\begin{eqnarray*}
E[e^{iu\lambda^x_t}g] = E\left[E[e^{iu\lambda^x_t}| \lambda^x_s]g\right].
\end{eqnarray*}
Note that
\begin{eqnarray*}
E\left[E[e^{iu\lambda^x_t}| \lambda^x_s]g\right]
=E\left[E[e^{iu\lambda^x_t}| \lambda^x_s]E[g| \lambda^x_s]\right]
=E\left[e^{iu\lambda^x_t}E[g| \lambda^x_s]\right].
\end{eqnarray*}
Let $D = \{z \in \complex ; \ \operatorname{Re}z < \frac{M_1}{2}\}$, where $M_1$ is the positive constant chosen in Proposition \ref{strong moment}. First, we will prove that $f(z) = E[e^{z\lambda^x_t}g]$ is holomorphic on $D$ for any $\mathscr{F}^x_s$-measurable function $g$. Let $z = a + ib$, where $a, b \in \reels$ with $a < \frac{M_1}{2}$. Then, we have
\begin{eqnarray*}
\left|f(z)\right| \le \|g\|_{\infty}E[e^{a\lambda^x_t}] < \infty,
\end{eqnarray*}
and thus $f(z)=E[e^{z\lambda^x_t}g]$ is defined on $D$. Define $u(a,b)$ and $v(a,b)$ as the real part and the imaginary part of $f(z)$ respectively, namely,
\begin{eqnarray*}
f(z) = E[\cos(b\lambda^x_t)e^{a\lambda^x_t}g] + iE[\sin(b\lambda^x_t)e^{a\lambda^x_t}g] = u(a,b) + iv(a,b).
\end{eqnarray*}
Write $\partial_a = \frac{\partial}{\partial a}$ and $\partial_b = \frac{\partial}{\partial b}$. $|\partial_a(\cos(b\lambda^x_t)e^{a\lambda^x_t}g)|$, $|\partial_b(\cos(b\lambda^x_t)e^{a\lambda^x_t}g)|$, $|\partial_a(\sin(b\lambda^x_t)e^{a\lambda^x_t}g)|$ and $|\partial_b(\sin(b\lambda^x_t)e^{a\lambda^x_t}g)|$ are dominated by an integrable random variable $|\lambda^x_te^{\frac{M_1}{2}\lambda^x_t}g|$ on $D$. Hence, the permutation of differential and integral is permitted, and thus we have
\begin{align*}
\partial_au(a,b) = \partial_bv(a,b) = E[\lambda^x_t\cos(b\lambda^x_t)e^{a\lambda^x_t}g] \ \text{ and } \
\partial_bu(a,b) = -\partial_av(a,b) = -E[\lambda^x_t\sin(b\lambda^x_t)e^{a\lambda^x_t}g].
\end{align*}
The Lebesgue's theorem guarantees that $\partial_au(a,b)$, $\partial_bu(a,b)$, $\partial_av(a,b)$ and $\partial_bv(a,b)$ are continuous with respect to $a$ and $b$. In particular, they are total differentiable. Then, the Cauchy-Riemann relations lead that $f(z)$ is holomorphic on $D$. Completely similarly, we can prove that $z  \mapsto E\left[e^{z\lambda^x_t}E[g| \lambda^x_s]\right]$ is also holomorphic on $D$.

Second, we will confirm that $E[e^{z\lambda^x_t}g] = E\left[e^{z\lambda^x_t}E[g| \lambda^x_s]\right]$ for $z \in (-\frac{M_1}{2}, \frac{M_1}{2})$. Let $p_N(x) = \sum_{n=0}^N\frac{(zx)^n}{n!}$. Then, (\ref{Markov for polynomial}) leads 
\begin{eqnarray*}
E\left[\sum_{n=0}^N\frac{(z\lambda^x_t)^n}{n!}g\right]
&=& E\left[E\left[\left.\sum_{n=0}^N\frac{(z\lambda^x_t)^n}{n!}g\right|\mathscr{F}^x_s\right]\right] = E\left[E\left[\left.\sum_{n=0}^N\frac{(z\lambda^x_t)^n}{n!}\right|\mathscr{F}^x_s\right]g\right]\\
&=& E\left[E\left[\left.\sum_{n=0}^N\frac{(z\lambda^x_t)^n}{n!}\right|\lambda^x_s\right]g\right] =  E\left[\sum_{n=0}^N\frac{(z\lambda^x_t)^n}{n!}E[g| \lambda^x_s]\right].
\end{eqnarray*}
On the other hand, since $\sum_{n=0}^N\frac{(z\lambda^x_t)^n}{n!} \to e^{z\lambda^x_t}$ as $N \to \infty$ and $|\sum_{n=0}^N\frac{(z\lambda^x_t)^n}{n!}| \le |\sum_{n=0}^N\frac{(\frac{M_1}{2}\lambda^x_t)^n}{n!}| \le e^{\frac{M_1}{2}\lambda^x_t}$ hold for every $z \in (-\frac{M_1}{2}, \frac{M_1}{2})$, we have
\begin{eqnarray*}
\sum_{n=0}^N\frac{(z\lambda^x_t)^n}{n!}g \to e^{z\lambda^x_t}g \quad \text{as $N \to \infty$ in $L^1$-sence,}
\end{eqnarray*}
 and 
 \begin{eqnarray*}
 \sum_{n=0}^N\frac{(z\lambda^x_t)^n}{n!}E[g| \lambda^x_s] \to e^{z\lambda^x_t}E[g| \lambda^x_s] \quad \text{as $N \to \infty$ in $L^1$-sence}
 \end{eqnarray*}
by the Lebesgue's theorem. Therefore, we get the desired equation
 \begin{eqnarray*}
 E\left[e^{z\lambda^x_t}g\right] = E\left[e^{z\lambda^x_t}E[g| \lambda^x_s]\right] \quad \text{for $z \in \left(-\frac{M_1}{2}, \frac{M_1}{2}\right)$.}
 \end{eqnarray*}
Now, the identity theorem guarantees the conclusion.
  \end{proof}

\begin{proof}[{\bf Proof of Theorem \ref{Markovian property}}]
For almost every $a, b \in \reels$ with $a < b$, we will prove that
\begin{eqnarray}
\label{F levy}
P \Big[\lambda^x_t \in (a,b] \Big| \mathscr{F}^x_s \Big] = \lim_{A \to \infty} \frac{1}{2\pi} \int_{-A}^A \frac{e^{-iua} - e^{-iub}}{iu}E\left[e^{iu\lambda^x_t} \Big| \mathscr{F}^x_s\right]du \quad a.s,
\end{eqnarray}
and
\begin{eqnarray}
\label{G levy}
P \Big[\lambda^x_t \in (a,b] \Big| \lambda^x_s \Big]  = \lim_{A \to \infty} \frac{1}{2\pi} \int_{-A}^A \frac{e^{-iua} - e^{-iub}}{iu}E\left[e^{iu\lambda^x_t} \Big| \lambda^x_s\right]du \quad a.s.
\end{eqnarray}
However, by considering a probability measure $P_F(d\omega) = P[d\omega \cap F]/P[F]$, the L\'evy's inversion formula gives
\begin{eqnarray*}
P \Big[ \big\{ \lambda^x_t \in (a,b] \big\} \cap F \Big] = \lim_{A \to \infty} \frac{1}{2\pi} \int_{-A}^A \frac{e^{-iua} - e^{-iub}}{iu}E\left[e^{iu\lambda^x_t} 1_F \right]du
\end{eqnarray*}
 for any set $F \in \mathscr{F}^x_s$. Moreover, we know
 \begin{eqnarray*}
 \frac{1}{2\pi} \int_{-A}^A \frac{e^{-iua} - e^{-iub}}{iu}E\left[e^{iu\lambda^x_t} 1_F \right]du
 =  E\left[ E\left[ \frac{1}{\pi}\big\{ D_A(\lambda^x_t-a) - D_A(\lambda^x_t-b) \big\} \Big| \mathscr{F}^x_s\right]1_F \right]\\
 \end{eqnarray*} 
 where $D_A$ is the Dirichlet integral, i.e.
 \begin{eqnarray*}
 D_A(\alpha) = \int_0^A \frac{\sin(u\alpha)}{u}du.
 \end{eqnarray*}
Then, as is well known, we can apply the Lebesgue's theorem and get
\begin{eqnarray*}
 \lim_{A\to \infty} \frac{1}{2\pi} \int_{-A}^A \frac{e^{-iua} - e^{-iub}}{iu}E\left[e^{iu\lambda^x_t} 1_F \right]du
 =  E\left[ \left(\lim_{A \to \infty} \frac{1}{2\pi} \int_{-A}^A \frac{e^{-iua} - e^{-iub}}{iu} E\left[e^{iu\lambda^x_t} \Big| \mathscr{F}^x_s\right]du \right)1_F\right].
\end{eqnarray*}
Thus, (\ref{F levy}) holds. In the same way, (\ref{G levy}) also holds.
Then, from Lemma \ref{exp markov}, we get $P[\lambda^x_t \in (a,b] | \mathscr{F}^x_s] = P[\lambda^x_t \in (a,b] | \lambda^x_s] \ \ a.s.$ for almost every $a, b \in \reels$ with $a < b$. With the help of the monotone class theorem, $E[f(\lambda^x_t)| \mathscr{F}^x_s] = E[f(\lambda^x_t)| \lambda^x_s] \ \ a.s.$ holds for any bounded measurable function $f$.
  \end{proof}


\subsection{ Proofs of Subsection 3.3}

\subsubsection{ Markovian property }

\begin{proof}[{\bf Proof of Proposition \ref{X Markovian property}}]

From Theorem \ref{Markovian property}, we immediately get, for any $t \ge s \ge 0$ and bounded measurable function $f$, 
\begin{eqnarray}
\label{(1) Markov}
E\big[f\big(X^{x,(1)}_t\big) \big| \mathscr{F}^x_s \big] = E\big[f\big(X^{x,(1)}_t\big) \big| X^{x,(1)}_s \big]  \ \ a.s.
\end{eqnarray}
$X_t^x$ has the following relation. For any $t \ge s \ge 0$,
\begin{eqnarray*}
X^{x,(2)}_t 
&=& \left(x_1t + x_2\right)e^{-\beta t} + \int_{(0,s)} \alpha (t-u)e^{-\beta(t-u)}dN_u^{x_1} + \int_{[s,t)} \alpha (t-u)e^{-\beta(t-u)}dN_u^{x_1}\\
&=& \Bigg\{ (t-s)\Big(x_1e^{-\beta s} + \int_{(0,s)} \alpha e^{-\beta(s-u)}dN_u^{x_1}\Big)+ \Big( (x_1s + x_2)e^{-\beta s} + \int_{(0,s)} \alpha (s-u)e^{-\beta(s-u)}dN_u^{x_1}\Big) \Bigg\}e^{-\beta(t-s)} \\
&&+  \int_{[s,t)} \alpha (t-u)e^{-\beta(t-u)}dN_u^{x_1}\\
&=&  \left( X^{x,(1)}_s(t-s) + X^{x,(2)}_s \right)e^{-\beta(t-s)} +  \int_{[s,t)} \alpha (t-u)e^{-\beta(t-u)}dN_u^{x_1},
\end{eqnarray*}
and similarly,
\begin{eqnarray*}
X^{x,(3)}_t = \left(X^{x,(1)}_s(t-s)^2 + 2X^{x,(2)}_s(t-s) + X^{x,(3)}_s\right)e^{-\beta (t-s)} + \int_{[s,t)} \alpha (t-u)^2e^{-\beta(t-u)}dN_u^{x_1}.
\end{eqnarray*}
Therefore,  $X^x_t$ is $\sigma \big( X^{x,(1)}_u, X^{x,(2)}_s, X^{x,(3)}_s; u \in [s,t] \big)$-measurable for any $t \ge s\ge 0$.

Let $g_1, g_2$ and $g_3$ be $\sigma \big( X^{x,(1)}_u; u \in [s,t] \big), \sigma \big( X^{x,(2)}_s \big)$ and $\sigma \big(X^{x,(3)}_s \big)$ measurable bounded functions respectively. Then, (\ref{(1) Markov}) and the monotone class theorem lead
\begin{eqnarray*}
E[g_1|X^x_s] 
= E[g_1|\mathscr{F}^x_s|X^x_s]
= E[g_1|X^{x,(1)}_s|X^x_s]
= E[g_1|X^{x,(1)}_s]
= E[g_1|\mathscr{F}^x_s]\ \ a.s.
\end{eqnarray*}
Thus,
\begin{eqnarray*}
E[g_1g_2g_3|\mathscr{F}^x_s]
= g_2g_3E[g_1|\mathscr{F}^x_s]
= g_2g_3E[g_1|X^x_s]
= E[g_1g_2g_3|X^x_s]\ \ a.s.
\end{eqnarray*}
By the monotone class theorem, we get the conclusion.
  \end{proof}

Before we prove the homogeneous Markov property of process $X$, we prepare the following  technical lemma.
\begin{lemma*}
\label{Y homo}
For any bounded function $f$ defined on the path space of $X^{x,(1)}_u, u \in [s,t]$,
\begin{eqnarray*}
E\big[ f \big( X^{x,(1)}_u; u \in [s,t] \big) \big| \mathscr{F}^x_s \big](\omega) =  E\big[ f \big( X^{y,(1)}_{u-s}; u \in [s,t] \big) \big] \big|_{y = X^{x,(1)}_s(\omega) } \ \ a.s \ \omega.
\end{eqnarray*}
\end{lemma*}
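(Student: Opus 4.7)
The plan is to establish the statement first at a single time point, then extend to finite-dimensional distributions, and finally promote it to arbitrary bounded path functionals via a monotone class argument.

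First, I would fix $t \ge s \ge 0$ and prove the one-dimensional homogeneous identity
\begin{eqnarray*}
E\bigl[g\bigl(X^{x,(1)}_t\bigr)\,\bigl|\,\mathscr{F}^x_s\bigr](\omega) = E\bigl[g\bigl(X^{y,(1)}_{t-s}\bigr)\bigr]\bigl|_{y=X^{x,(1)}_s(\omega)}\quad a.s.\ \omega
\end{eqnarray*}
for every bounded measurable $g$. Since $\lambda^x_t = \mu + X^{x,(1)}_t$, Theorem \ref{Markovian property} already yields a measurable $\psi$ with $E[g(X^{x,(1)}_t)\,|\,\mathscr{F}^x_s] = \psi(X^{x,(1)}_s)$; the task is to identify $\psi(y) = E[g(X^{y,(1)}_{t-s})]$. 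For this I would use the decomposition
\begin{eqnarray*}
X^{x,(1)}_t = e^{-\beta(t-s)} X^{x,(1)}_s + \int_{[s,t)}\alpha e^{-\beta(t-u)}dN^{x_1}_u,
\end{eqnarray*}
which shows that the evolution of $X^{x,(1)}$ on $[s,t]$ depends on the past only through $X^{x,(1)}_s$, because the $\mathscr{F}^x_u$-intensity of $N^{x_1}$ at any $u\ge s$ equals $\mu + X^{x_1,(1)}_u$, and the latter is a functional of the path only after $s$ once $X^{x,(1)}_s$ is specified. Consequently, the conditional law of $(X^{x,(1)}_{s+h})_{h\ge 0}$ given $X^{x,(1)}_s=y$ coincides with the unconditional law of $(X^{y,(1)}_h)_{h\ge 0}$, invoking uniqueness in law of the exponential-kernel Hawkes construction referenced after Definition \ref{HawkesProcess}.

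Next I would extend to finite-dimensional distributions at $s = u_0 < u_1 < \cdots < u_n \le t$ by induction on $n$. For product functions $f(z_1,\dots,z_n) = g_1(z_1)\cdots g_n(z_n)$, conditioning on $\mathscr{F}^x_{u_{n-1}}$ and using the single-time identity rewrites $E[g_n(X^{x,(1)}_{u_n})\,|\,\mathscr{F}^x_{u_{n-1}}]$ as a measurable function of $X^{x,(1)}_{u_{n-1}}$; incorporating this into the remaining product and applying the inductive hypothesis closes the step. The monotone class theorem then extends the identity from products to all bounded measurable functions on $\reels^{n}$.

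Finally, a second application of the monotone class theorem promotes the equality from cylinder functionals depending on finitely many coordinates $X^{x,(1)}_{u_1},\dots,X^{x,(1)}_{u_n}$ to arbitrary bounded measurable functionals of the whole path $(X^{x,(1)}_u)_{u\in[s,t]}$, using that the path $\sigma$-field is generated by finite-dimensional cylinders (together with the right-continuity of the augmented filtration to handle measurability at the boundary point $s$). The main obstacle is the first step: giving a clean justification of the ``restart'' identification, i.e.\ that conditionally on $X^{x,(1)}_s=y$ the shifted paths are distributed as a freshly initialized Hawkes-core process $X^{y,(1)}$. This hinges on uniqueness in law of the exponential Hawkes process given its intensity's starting value, which I would invoke along the lines of the construction of Theorem 1(a) in \cite{Bremaud} adapted to our convention where the intensity begins at $\mu+y$.
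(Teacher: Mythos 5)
Your overall architecture (one time point, then finite-dimensional marginals by iterated conditioning, then the monotone class theorem on cylinder sets) is exactly the paper's, and those two later stages are fine. The divergence --- and the problem --- is in the first stage. The paper identifies your function $\psi$ with $\psi(y)=E[g(X^{y,(1)}_{t-s})]$ by a purely analytic route: for $p\in\mathscr{P}$ it has the explicit semigroup formula $E[p(\lambda^x_t)\mid\mathscr{F}^x_s]=e^{(t-s)\mathscr{A}}p(\lambda^x_s)$ from Lemmas \ref{P in DomA} and \ref{Remainder}, and since $e^{(t-s)\mathscr{A}}p$ is a concrete deterministic function of the starting value, evaluating it at $\lambda^x_s(\omega)=\mu+X^{x,(1)}_s(\omega)$ and comparing with the same formula for the process started at $y$ at time $0$ gives the homogeneity identity for polynomials for free; it then passes to $e^{iu\cdot}$ via Proposition \ref{strong moment}, dominated convergence and the identity theorem (as in Lemma \ref{exp markov}), and finally to general bounded $g$ by L\'evy inversion. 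Your route replaces this with a probabilistic ``restart'' argument: conditionally on $\mathscr{F}^x_s$ the post-$s$ increments of $N^{x_1}$ form a point process whose intensity functional is that of a Hawkes process freshly started from $X^{x,(1)}_s$, so uniqueness in law pins down the conditional law. If carried out, this is arguably cleaner (it yields Markovianity and homogeneity simultaneously), but it is genuinely different from what the paper does.

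The gap is that you have not carried it out, and you say so yourself. Two nontrivial things are missing. First, the compensator property of $\int\lambda\,du$ is a $P$-a.s.\ martingale statement; to assert that \emph{under a regular conditional probability given $\mathscr{F}^x_s$} the shifted process has the claimed intensity, you must show the martingale property survives conditioning for a.e.\ $\omega$ simultaneously over all relevant time pairs (a countable-dense-times plus right-continuity argument that is standard but not free). Second, Theorem 1(a) of \cite{Bremaud} as used in the paper is an existence statement for the unconditional construction; what you actually need is uniqueness in law of a point process whose intensity is a prescribed predictable functional of its own past and of the initial value $y$, applied $\omega$-by-$\omega$ under the conditional measure. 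That uniqueness is true (essentially Jacod's characterization of point-process laws by their compensators), but it is neither proved in the paper nor delivered by your citation, so as written the decisive identification $\psi(y)=E[g(X^{y,(1)}_{t-s})]$ rests on an unestablished step. Either supply the conditional-restart argument in full or fall back on the paper's semigroup computation, which avoids regular conditional probabilities entirely.
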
 

\begin{proof}
From Theorem \ref{Markovian property} and the monotone class theorem, $E\big[f \big( X^{x,(1)}_u; u \in [s,t] \big) \big| \mathscr{F}^x_s \big] = E\big[f \big( X^{x,(1)}_u; u \in [s,t] \big) \big| X^{x,(1)}_s \big] \ a.s.$ holds. Therefore, we only have to prove the statement replaced $ \mathscr{F}^x_s$ by $ \sigma \big( X^{x,(1)}_s \big)$.
For any $p \in \mathscr{P}$, we know that for almost every $\omega \in \Omega$ and $s \le t$,
\begin{eqnarray*}
E\left[\left. p \big( X^{x,(1)}_{t} \big) \right| X^{x,(1)}_s\right](\omega) 
&=& E\left[\left. p \left( \lambda^{x_1}_{t} - \mu \right) \right| \lambda_s^{x_1} \right](\omega) \\
&=& e^{(t-s)\mathscr{A}} p \left( \lambda^{x_1}_{s}(\omega) - \mu \right) \\
&=& e^{(t-s)\mathscr{A}} p \left( \lambda^{X^{x,(1)}_{s}(\omega)}_0 (\tilde{\omega})-\mu \right) \\
&=& E\left[\left. p \left( \lambda^y_{t-s}-\mu \right)  \right] \right|_{y=X^{x,(1)}_s(\omega)}\\
&=& E\left[\left. p \left( X^{y,(1)}_{t-s} \right)  \right] \right|_{ y=X^{x,(1)}_s(\omega) },
\end{eqnarray*}
where the operator $e^{(t-s)\mathscr{A}}$ is defined in Subsection 3.2. In particular, for $u \in \reels$ with $|u| \le \frac{M_1}{2} $ and $p_N(y) = \sum_{n=0}^N \frac{(uy)^n}{n!} \in \mathscr{P}$,
\begin{eqnarray*}
E\left[\left. p_N \big( X^{x,(1)}_{t} \big) \right| X^{x,(1)}_s\right] = E\left[\left. p_N \left( X^{y,(1)}_{t-s} \right)  \right] \right|_{ y=X^{x,(1)}_s } \ \ a.s.
\end{eqnarray*}
Moreover, Proposition \ref{strong moment} and the Lebesgue's theorem give
\begin{eqnarray*}
E\left[\left.e^{iuX^{x,(1)}_t } \right| X^{x,(1)}_s\right] =  E\left[\left.e^{iuX^{y,(1)}_{t-s}}  \right] \right|_{ y=X^{x,(1)}_s } \ \ a.s.
\end{eqnarray*}
In the same way of the proof of Lemma \ref{exp markov}, one may confirm that Proposition \ref{strong moment} and the identity theorem guarantee that for general $u \in \reels$,
\begin{eqnarray*}
E\left[\left.e^{iuX^{x,(1)}_t } \right| X^{x,(1)}_s\right] =  E\left[\left.e^{iuX^{y,(1)}_{t-s}}  \right] \right|_{ y=X^{x,(1)}_s } \ \ a.s.
\end{eqnarray*}
Finally, it is proved in the same way as Theorem \ref{Markovian property} that
\begin{eqnarray*}
E\left[\left. f \big( X^{x,(1)}_{t} \big) \right| X^{x,(1)}_s\right] = E\left[\left. f \left( X^{y,(1)}_{t-s} \right)  \right] \right|_{ y=X^{x,(1)}_s } \ \ a.s.
\end{eqnarray*}
for any bounded measurable function $f$.
Thus, for any $s \le u_1 \le u_2 \le t$ and bounded functions $g_1,g_2$, 
\begin{eqnarray*}
E\bigg[ g_1\big( X^{x,(1)}_{u_1} \big) g_2\big( X^{x,(1)}_{u_2} \big) \bigg| X^{x,(1)}_s \bigg]
&=& E\bigg[g_1\big( X^{x,(1)}_{u_1} \big) E\big[ g_2\big( X^{x,(1)}_{u_2} \big) \big| \mathscr{F}^x_{u_1} \big] \bigg| X^{x,(1)}_s \bigg]\\
&=& E\bigg[g_1\big( X^{x,(1)}_{u_1} \big) E\big[ g_2\big( X^{x,(1)}_{u_2} \big) \big| X^{x,(1)}_{u_1} \big] \bigg| X^{x,(1)}_s \bigg]\\
&=& E\bigg[g_1\big( X^{x,(1)}_{u_1} \big) E\big[ g_2\big( X^{y_1,(1)}_{u_2-u_1} \big)  \big] \big|_{y_1 = X^{x,(1)}_{u_1}} \bigg| X^{x,(1)}_s \bigg]\\
&=& E\bigg[ g_1\big( X^{y_2,(1)}_{u_1-s} \big) E\big[ g_2\big( X^{y_1,(1)}_{u_2-u_1} \big)  \big] \big|_{y_1 = X^{y_2,(1)}_{u_1-s}} \bigg]  \bigg|_{y_2 = X^{x,(1)}_s}\\
&=& E\bigg[g_1\big( X^{y_2,(1)}_{u_1-s} \big) E\big[ g_2\big( X^{y_2,(1)}_{u_2-s} \big)  \big|  X^{y_2,(1)}_{u_1-s} \big] \bigg]  \bigg|_{y_2 = X^{x,(1)}_s}\\
&=& E\bigg[g_1\big( X^{y,(1)}_{u_1-s} \big) g_2\big( X^{y,(1)}_{u_2-s} \big) \bigg]  \bigg|_{y = X^{x,(1)}_s} \ \ a.s.
\end{eqnarray*}
Inductively, we also get for any $k \in \naturels$, $s \le u_1 \le \dots \le u_k \le t$ and bounded functions $g_1, \dots, g_k$,
\begin{eqnarray*}
E\bigg[ g_1\big( X^{x,(1)}_{u_1} \big) \cdots g_k\big( X^{x,(1)}_{u_k} \big) \bigg| X^{x,(1)}_s \bigg]
=E\bigg[g_1\big( X^{y,(1)}_{u_1-s} \big) \cdots g_k\big( X^{y,(1)}_{u_k-s} \big) \bigg]  \bigg|_{y = X^{x,(1)}_s} \ \ a.s.
\end{eqnarray*}
By considering cylinder sets in $\sigma \big( X^{x,(1)}_u; u \in [s,t] \big)$, the monotone class theorem gives the conclusion.
  \end{proof}

\begin{proof}[{\bf Proof of Proposition \ref{X homo property}}]
Let $h_1$ be a bounded function defined on the path space of $X^{x,(1)}_u, u \in [s,t]$. Moreover, let $h_2$ and $h_3$ be bounded functions on $\reels$.  Lemma \ref{Y homo} leads
\begin{eqnarray*}
E\bigg[  h_1 \big( X^{x,(1)}_u; u \in [s,t] \big) h_2 \big( X^{x,(2)}_s \big) h_3 \big( X^{x,(3)}_s \big) \bigg| X^x_s \bigg]
&=& h_2 \big( X^{x,(2)}_s \big) h_3 \big( X^{x,(3)}_s \big) E\bigg[  h_1 \big( X^{x,(1)}_u; u \in [s,t] \big)  \bigg| X^x_s \bigg]\\
&=& h_2 \big( X^{x,(2)}_s \big) h_3 \big( X^{x,(3)}_s \big) E\bigg[  h_1 \big( X^{x,(1)}_u; u \in [s,t] \big) \bigg| \mathscr{F}^x_s \bigg| X^x_s  \bigg]\\
&=& h_2 \big( X^{x,(2)}_s \big) h_3 \big( X^{x,(3)}_s \big) E\bigg[ h_1 \big( X^{y,(1)}_{u_k-s} ; u \in [s,t] \big) \bigg]  \bigg|_{y = X^{x,(1)}_s}\\
&=& E\bigg[ h_1 \big( X^{y_1,(1)}_{u_k-s} ; u \in [s,t] \big)  h_2 ( y_2 ) h_3 ( y_3 ) \bigg]  \bigg|_{(y_1, y_2, y_3) = X^x_s} \ \ a.s.
\end{eqnarray*}
Therefore, the monotone class theorem yields that 
\begin{eqnarray*}
E\bigg[ h( X^{x,(1)}_u, X^{x,(2)}_s, X^{x,(3)}_s; u \in [s,t] \big) \bigg| X^x_s \bigg] = E\bigg[ h \big( X^{y_1,(1)}_{u-s}, y_2, y_3; u \in [s,t] \big) \bigg]  \bigg|_{(y_1, y_2, y_3) = X^x_s} \ \ a.s.
\end{eqnarray*}
for any bounded function $h$. Note that $X^{x,(1)}_u(\omega), u \in [s,t]$ completely determines the jumps of $N^{x_1}_u(\omega), u \in [s,t)$. Thus, for any bounded measurable function $f$, we can conclude
\begin{eqnarray*}
E\big[ f  \big( X^x_t \big) \big| X^x_s \big] 
&=& E\left[ \left. f\left(
    \begin{array}{ccc}
      X^{x,(1)}_s\\
      \left( X^{x,(1)}_s(t-s) + X^{x,(2)}_s \right)e^{-\beta(t-s)} +  \int_{[s,t)} \alpha (t-u)e^{-\beta(t-u)}dN_u^{x_1} \\
      \left(X^{x,(1)}_s(t-s)^2 + 2X^{x,(2)}_s(t-s) + X^{x,(3)}_s\right)e^{-\beta (t-s)} + \int_{[s,t)} \alpha (t-u)^2e^{-\beta(t-u)}dN_u^{x_1} \\
    \end{array}
  \right)\right| X^x_s \right] \\
&=& E\left[ \left. f\left(
    \begin{array}{ccc}
      y_1\\
      \left( y_1(t-s) + y_2 \right)e^{-\beta(t-s)} +  \int_{[s,t)} \alpha (t-u)e^{-\beta(t-u)}dN_{u-s}^{y_1} \\
      \left( y_1(t-s)^2 + 2y_2(t-s) + y_3\right)e^{-\beta (t-s)} + \int_{[s,t)} \alpha (t-u)^2e^{-\beta(t-u)}dN_{u-s}^{y_1} \\
    \end{array}
  \right) \right] \right|_{(y_1, y_2, y_3) = X^x_s}  \\
&=& E\left[ \left. f\left(
    \begin{array}{ccc}
      y_1\\
      \left( y_1(t-s) + y_2 \right)e^{-\beta(t-s)} +  \int_{[0,t-s)} \alpha (t-s-u)e^{-\beta(t-s-u)}dN_u^{y_1} \\
      \left( y_1(t-s)^2 + 2y_2(t-s) + y_3\right)e^{-\beta (t-s)} + \int_{[0,t-s)} \alpha (t-s-u)^2e^{-\beta(t-s-u)}dN_u^{y_1} \\
    \end{array}
  \right) \right] \right|_{(y_1, y_2, y_3) = X^x_s}  \\
&=& E\big[ f  \big( X^y_{t-s} \big) \big]  \big|_{y = X^x_s} 
= \int_{\reels^3} f(y) dP^{t-s}(X^x_s, dy)\ \ a.s.
\end{eqnarray*}
 \end{proof}

To prove Proposition \ref{inv meas}, we prepare the following lemma. 
\begin{lemma*}
\label{5E}
For any $0 \le s < t$ and a bounded measurable function $f$,
\begin{eqnarray*}
E\left[f(X^x_{t-s})\right]|_{x=\bar{X}_s(\bar{\omega})} = \bar{E}\left[f(\bar{X}_t)|\bar{X}_s\right](\bar{\omega}) \quad a.s. \ \bar{\omega}
\end{eqnarray*}
\end{lemma*}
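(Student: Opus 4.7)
The plan is to prove the identity by a direct computation that parallels the proof of Proposition \ref{X homo property}, exploiting a clean pathwise decomposition of $\bar{X}_t$ (for $t > s$) into a deterministic function of $\bar{X}_s$ and a ``fresh'' Hawkes contribution living only on $[s, t)$.

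First, using the binomial expansion $(t-v)^k = \sum_{j=0}^k \binom{k}{j}(t-s)^{k-j}(s-v)^j$ inside the exponential kernel, one checks that
\begin{eqnarray*}
\bar{X}^{(1)}_t &=& e^{-\beta(t-s)}\bar{X}^{(1)}_s + \int_{[s,t)} \alpha e^{-\beta(t-v)}d\bar{N}_v, \\
\bar{X}^{(2)}_t &=& e^{-\beta(t-s)}\bigl((t-s)\bar{X}^{(1)}_s + \bar{X}^{(2)}_s\bigr) + \int_{[s,t)} \alpha (t-v)e^{-\beta(t-v)}d\bar{N}_v,\\
\bar{X}^{(3)}_t &=& e^{-\beta(t-s)}\bigl((t-s)^2\bar{X}^{(1)}_s + 2(t-s)\bar{X}^{(2)}_s + \bar{X}^{(3)}_s\bigr) + \int_{[s,t)} \alpha (t-v)^2 e^{-\beta(t-v)}d\bar{N}_v.
\end{eqnarray*}
Thus $\bar{X}_t = \Phi\bigl(t-s, \bar{X}_s, (\bar{N}_v - \bar{N}_s)_{v \in [s,t)}\bigr)$ for an explicit deterministic measurable map $\Phi$.

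Next I identify the conditional law of the increment process $(\bar{N}_{s+r} - \bar{N}_s)_{0\le r < t-s}$ given $\mathscr{F}^{\bar{N}}_s$. Splitting the intensity of $\bar{N}$ at a time $u \ge s$ gives
\begin{eqnarray*}
\bar{\lambda}^{(1)}_u = \mu + e^{-\beta(u-s)}\bar{X}^{(1)}_s + \int_{[s,u)} \alpha e^{-\beta(u-v)}d\bar{N}_v,
\end{eqnarray*}
which, as a process in $u-s$, is exactly the Hawkes intensity of $N^y$ with $y = \bar{X}^{(1)}_s(\bar{\omega})$. Since the law of a point process is determined by its $\mathscr{F}^{\bar{N}}$-predictable intensity (the same uniqueness invoked for the existence of $N^x$ via Theorem 1(a) of \cite{Bremaud}), the conditional distribution of $(\bar{N}_{s+r} - \bar{N}_s)_{0\le r < t-s}$ given $\mathscr{F}^{\bar{N}}_s$ coincides, on the event $\{\bar{X}^{(1)}_s = y\}$, with the law of $(N^{y}_r)_{0 \le r < t-s}$.

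Combining the two ingredients, for any bounded measurable $f$,
\begin{eqnarray*}
\bar{E}\bigl[f(\bar{X}_t) \bigm| \mathscr{F}^{\bar{N}}_s\bigr](\bar{\omega})
= E\bigl[f\bigl(\Phi(t-s, x, N^{y}|_{[0,t-s)})\bigr)\bigr]\Big|_{x = \bar{X}_s(\bar{\omega}),\, y = \bar{X}^{(1)}_s(\bar{\omega})}
= E\bigl[f(X^x_{t-s})\bigr]\Big|_{x = \bar{X}_s(\bar{\omega})}\quad a.s.,
\end{eqnarray*}
where the last equality uses the analogous pathwise decomposition of $X^x_{t-s}$ (with $s$ replaced by $0$). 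Since the right-hand side is $\sigma(\bar{X}_s)$-measurable, conditioning further on $\sigma(\bar{X}_s) \subset \mathscr{F}^{\bar{N}}_s$ leaves it unchanged, which yields the claim. The main obstacle is the justification of the conditional-law identification in the second paragraph; handling it rigorously requires one either to appeal directly to the uniqueness part of the Hawkes construction or to verify pathwise that on $[s, t)$ the compensator of $\bar{N} - \bar{N}_s$ in the enlarged filtration $\mathscr{F}^{\bar{N}}_s \vee \sigma(\bar{N}_v - \bar{N}_s;\, v \in [s,\cdot])$ has exactly the Hawkes form in $u-s$ with initial value $\bar{X}^{(1)}_s$.
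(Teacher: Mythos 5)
Your route is genuinely different from the paper's. You decompose $\bar{X}_t$ pathwise into a deterministic function of $\bar{X}_s$ plus integrals of the post-$s$ increments of $\bar{N}$, and then identify the conditional law of $(\bar{N}_{s+r}-\bar{N}_s)_{r<t-s}$ given $\mathscr{F}^{\bar{N}}_s$ with the law of $N^{y}$ at $y=\bar{X}^{(1)}_s$. The paper never identifies that conditional law. Instead it runs the extended-generator machinery a second time for the stationary intensity: it first re-derives the exponential moment bound $\bar{E}[e^{M_1\bar{\lambda}^{(1)}_t}]\le K_2/K_1$ via the drift condition and Fatou, then shows $\bar{E}[p(\bar{\lambda}^{(1)}_t)\,|\,\bar{\lambda}^{(1)}_s]=e^{(t-s)\mathscr{A}}p(\bar{\lambda}^{(1)}_s)$ for polynomials $p$ exactly as in Lemmas \ref{P in DomA} and \ref{Remainder}, and observes that the right-hand side is the \emph{same} function of the current state as $E[p(\lambda^{y}_{t-s})]|_{y=\bar{\lambda}^{(1)}_s}$ because the two processes share the operator $\mathscr{A}$ of (\ref{exgenerator}). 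The passage from polynomials to all bounded measurable $f$, and from the first coordinate to the full vector, is then the bootstrapping of Lemma \ref{exp markov}, Theorem \ref{Markovian property}, Lemma \ref{Y homo} and Proposition \ref{X homo property} (identity theorem, L\'evy inversion, monotone class, plus the same pathwise decomposition you use). What the paper's approach buys is that every step is elementary and already proved elsewhere in the appendix; what yours buys is a one-shot argument that, once the conditional law is identified, delivers the statement for all bounded $f$ without the polynomial/analytic-continuation ladder.

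The one place your argument is thin is exactly the step you flag: ``the law of a point process is determined by its predictable intensity'' is a uniqueness statement, and Theorem 1(a) of \cite{Bremaud}, as invoked in the paper, is the existence/construction half; moreover you need it in a \emph{conditional} form, i.e.\ a regular conditional distribution of the post-$s$ increments that agrees with the measurable family $y\mapsto P[N^{y}\in\cdot\,]$ evaluated at $y=\bar{X}^{(1)}_s$. This can be made rigorous for the exponential kernel --- e.g.\ by computing the conditional inter-arrival densities $f^{\Delta\tau_i}(\,\cdot\,|\,X_{\tau_{i-1}}=y)$ as in the paper's proof of Proposition \ref{geometric ergodicity}, which depend on the past only through the current state and determine the law of the point process --- but as written this identification is asserted rather than proved, and it is the entire content of the lemma. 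With that step supplied, your proof is correct.
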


\begin{proof}
We again set $g(x, t) = e^{M_1x}e^{K_1t}$ and the operator $\bar{\mathscr{A}}$ same as Proposition \ref{strong moment}. Denote the $i$-th jump time of  $\bar{N}_t$ from time zero by $\tau_i$, i.e. $\tau_i = \inf\{t \ge 0 \ | \ \bar{N}[ (0,t] ] = i \}$.
Then, $\int_{(0, t]} g(\bar{\lambda}^{(1)}_s + \alpha, s) - g(\bar{\lambda}^{(1)}_s, s) \big(d\bar{N}_s - \bar{\lambda}^{(1)}_s ds \big)$ is a $\tau_i$-local martingale, see Theorem 18.7 in \cite{LS}. In the same way as the proof of Proposition \ref{strong moment}, we get,
\begin{eqnarray*}
g\big(\bar{\lambda}^{(1)}_{t \wedge \tau_i}, t \wedge \tau_i \big) -  g\big( \bar{\lambda}^{(1)}_0, 0 \big)
= \int_{(0, t \wedge \tau_i]} g(\bar{\lambda}^{(1)}_s + \alpha, s) -g(\bar{\lambda}^{(1)}_s, s) \big(d\bar{N}_s - \bar{\lambda}^{(1)}_s ds \big) + \int_{(0, t \wedge \tau_i]} \bar{\mathscr{A}}g(\bar{\lambda}^{(1)}_s, s)ds  \quad a.s.
\end{eqnarray*}
Thus, we have
\begin{eqnarray*}
\bar{E}\left[ g(\bar{\lambda}^{(1)}_{t\wedge \tau_i}, t \wedge \tau_i) - g(\bar{\lambda}^{(1)}_0, 0) \Big| \bar{\lambda}^{(1)}_0 = x \right] 
= E\left[ \int_{(0, t \wedge \tau_i]} \bar{\mathscr{A}} g(\bar{\lambda}_s^{(1)}, s) ds  \bigg| \bar{\lambda}^{(1)}_0 = x \right] 
\le \frac{K_2}{K_1}\left( e^{K_1t} - 1\right).
\end{eqnarray*}
From the Fatou's lemma, we have
\begin{eqnarray*} 
\bar{E}\left[ e^{M_1\bar{\lambda}^{(1)}_t}e^{K_1t}  \Big| \bar{\lambda}^{(1)}_0 = x \right] - e^{M_1x}  \le \frac{K_2}{K_1}\left( e^{K_1t} - 1\right).
\end{eqnarray*}
Then, from the stationarity of $\bar{\lambda}^{(1)}_0$, we also get the finiteness of moments of $\bar{\lambda}^{(1)}_t$ by
\begin{eqnarray*} 
\bar{E}\left[ e^{M_1\bar{\lambda}^{(1)}_t} \right] \le \frac{K_2}{K_1}.
\end{eqnarray*}
For the operator $\mathscr{A}$ same as (\ref{exgenerator}), $\bar{E}\big[ p\big( \bar{\lambda}^{(1)}_t \big) \big| \bar{\lambda}^{(1)}_s \big] = e^{(t-s)\mathscr{A}}p\big(\bar{\lambda}^{(1)}_s\big)$ a.s. holds for any $p \in \mathscr{P}$ in the same way of the proofs for Lemma \ref{P in DomA}, Lemma \ref{Remainder} and (\ref{Markov for polynomial}). These properties lead the Markovian property of $\bar{\lambda}^{(1)}_t$ as in the proof of Theorem \ref{Markovian property}. Furthermore, for almost every $\bar{\omega}$,
\begin{eqnarray*} 
\bar{E}\left[ p(\bar{X}^{(1)}_t) \Big| \bar{X}^{(1)}_s \right] (\bar{\omega})
= e^{(t-s)\mathscr{A}}p\left(\bar{\lambda}^{(1)}_s(\bar{\omega}) - \mu \right)
= e^{(t-s)\mathscr{A}}p\left( \lambda^{\bar{X}^{(1)}_s(\bar{\omega})}_0 - \mu\right)
= E\left[p(X^{x,(1)}_{t-s})\right] \Big|_{x=\bar{X}^{(1)}_s(\bar{\omega})}.
\end{eqnarray*}
Thus, similarly as the proofs of Lemma \ref{Y homo} and Proposition \ref{X homo property}, we get the conclusion.
  \end{proof}

\begin{proof}[{\bf Proof of Proposition \ref{inv meas}}]
Let $t\ge0$ and $A\in\mathscr{B}(\reels^3)$. By taking $s=0$, $f(x) = 1_A(x)$ and integrating both sides of the equation of Lemma \ref{5E};
\begin{eqnarray*}
\int_{\bar{\Omega}\times\Omega}1_A\left(X^x_t(\omega)|_{x=\bar{X}_0(\bar{\omega})}\right)dP(\omega)d\bar{P}(\bar{\omega}) = \bar{P}\left[\bar{X}_t  \in A\right] = P^{\bar{X}}[A],
\end{eqnarray*}
where we used the stationarity of $\bar{X}$. The above left hand side equals
\begin{eqnarray*}
\int_{\bar{\Omega}\times\Omega}1_A\left(X^x_t(\omega)|_{x=\bar{X}_0(\bar{\omega})}\right)dP(\omega)d\bar{P}(\bar{\omega})
= \int_{\reels^3_+}\int_{\Omega}1_A\left(X^x_t(\omega)\right)dP(\omega)dP^{\bar{X}}(x)
= \int_{\reels^3_+}P^t(x,A)dP^{\bar{X}}(x), 
\end{eqnarray*}
and then we are done.
  \end{proof}


\subsubsection{ Ergodicity }

The $V$-geometric ergodicity has been proved for the process $X^{(1)}$, see Proposition 4.5 in \cite{ClinetYoshida}. For the Hawkes core process $X = (X^{(1)}, X^{(2)}, X^{(3)})$, we can also prove it in a similar way. That is, we apply Theorem 6.1 in \cite{MeynTweedie3}. First, we again consider the extended generator and the drift criterion. The following lemma is proved by the same method as Proof of Proposition 4.5. in \cite{ClinetYoshida}. 

\begin{lemma*}
\label{X generator}
Let $\alpha, \beta$ and $\mu$ be the parameters of the Hawkes process $N^{x_1}_t$. For a differentiable function $f : \reels^3 \to \reels$, we define the operator $\mathscr{A}_X$ by
\begin{eqnarray*}
\mathscr{A}_Xf(y) = (\mu + y_1) \left\{ f \left( y + \left(
    \begin{array}{c}
      \alpha\\
      0\\
      0
    \end{array}
  \right)
 \right) - f ( y )\right\} 
 + \big( \partial_yf(y) \big)' \left\{ - \beta y + \left(
    \begin{array}{c}
      0\\
      y_1\\
      2y_2
    \end{array}
  \right)
 \right\}, \ y = \left(
    \begin{array}{c}
      y_1\\
      y_2\\
      y_3
    \end{array}
  \right) \in \reels^3.
\end{eqnarray*} Then, there exist a positive constant vector $M = (M_1, M_2, M_3)$ and  positive constants $K_1, K_2$ such that for $V(y) = e^{My}$, 
\begin{eqnarray*}
\mathscr{A}_XV(y) \le -K_1V(y) + K_2.
\end{eqnarray*}
\end{lemma*}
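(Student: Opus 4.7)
The plan is to establish the drift inequality by direct computation of $\mathscr{A}_X V$ applied to the exponential test function $V(y) = e^{M_1 y_1 + M_2 y_2 + M_3 y_3}$, and then to choose $M_1, M_2, M_3 > 0$ so that the coefficients of $y_1, y_2, y_3$ in the resulting bracket are all strictly negative. The essential structural fact that makes this possible is the subcriticality condition $\alpha/\beta < 1$, together with the triangular form of the deterministic drift vector $(0, y_1, 2y_2)$.

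Plugging $V$ into the definition of $\mathscr{A}_X$, the jump term contributes $(\mu + y_1)(e^{M_1\alpha}-1)V(y)$, and the gradient term yields $V(y)\bigl[M_2 y_1 + 2M_3 y_2 - \beta(M_1 y_1 + M_2 y_2 + M_3 y_3)\bigr]$. Collecting these,
\begin{eqnarray*}
\mathscr{A}_X V(y) = V(y)\Bigl\{ \mu(e^{M_1\alpha}-1) + \bigl[(e^{M_1\alpha}-1) + M_2 - \beta M_1\bigr] y_1 + \bigl[2M_3 - \beta M_2\bigr] y_2 - \beta M_3 y_3 \Bigr\}.
\end{eqnarray*}
The coefficient of $y_3$ is negative for any $M_3 > 0$. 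For the $y_2$-coefficient, it suffices to take $M_3 < \beta M_2/2$. The crucial coefficient is that of $y_1$: since $\alpha < \beta$, the function $M_1 \mapsto \beta M_1 - (e^{M_1\alpha}-1)$ has derivative $\beta - \alpha > 0$ at $M_1 = 0$ and vanishes there, hence is strictly positive on some interval $(0, M_1^*)$. Fix such an $M_1$, then choose $M_2 \in (0,\, \beta M_1 - (e^{M_1\alpha}-1))$, and finally $M_3 \in (0,\, \beta M_2/2)$.

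With these choices there exist $c_1, c_2, c_3 > 0$ such that, writing $A := \mu(e^{M_1\alpha}-1)$ and restricting to the state space $y \in \reels_+^3$,
\begin{eqnarray*}
\mathscr{A}_X V(y) \le V(y)\bigl( A - c_1 y_1 - c_2 y_2 - c_3 y_3 \bigr).
\end{eqnarray*}
To conclude, fix any $K_1 > 0$. On the compact set $B := \{y \in \reels_+^3 : c_1 y_1 + c_2 y_2 + c_3 y_3 \le A + K_1\}$ the quantity $V(y)(A + K_1 - c_1 y_1 - c_2 y_2 - c_3 y_3)$ is bounded, say by $K_2$; on the complement of $B$ this quantity is nonpositive. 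Therefore $\mathscr{A}_X V(y) \le -K_1 V(y) + K_2$ uniformly in $y$, which is the desired drift inequality.

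The only nontrivial point is the ordering of the choice of constants: one must exploit $\alpha < \beta$ to select $M_1$ first, then $M_2$, then $M_3$, because the off-diagonal coupling in the drift (namely $y_1$ feeding into the $y_2$-equation and $y_2$ into the $y_3$-equation) would otherwise force the coefficients to grow rather than cancel. No condition on the sign of $y_i$ is ever used beyond $y_i \ge 0$, so the inequality holds on the full state space of $X^x$.
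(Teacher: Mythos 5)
Your proof is correct and follows essentially the same route as the paper, which simply invokes the method of Proposition 4.5 in Clinet--Yoshida: apply the extended generator to the exponential Lyapunov function $V(y)=e^{My}$, use $\alpha<\beta$ to make the $y_1$-coefficient negative for small $M_1$, and absorb the bounded part on a compact subset of $\reels_+^3$ into the constant $K_2$. Your only added content is spelling out the triangular ordering $M_1\to M_2\to M_3$ needed to handle the coupling terms $y_1$ and $2y_2$ in the drift, which is exactly the detail the paper leaves implicit.
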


Then, we can prove Proposition \ref{X strong moment} with the help of this operator $\mathscr{A}_X$.
\begin{proof}[{\bf Proof of Proposition \ref{X strong moment}}]
Now, it is proved in the completely same way as the proof of Proposition \ref{strong moment} replaced $g(x, t) = e^{M_1x}e^{K_1t}$ and $\bar{\mathscr{A}}$ by $g_X(x, t) = e^{Mx}e^{K_1t}$ and $\bar{\mathscr{A}}_X$ satisfying $\bar{\mathscr{A}}_Xg_X(x, t) = e^{K_1t}(\mathscr{A}_Xe^{Mx} + K_1e^{Mx})$ respectively.
  \end{proof}

Second, we need to show that every compact set is petite for some skeleton chain, i.e. there exists $\delta>0$ such that for any compact set $C \in \mathscr{B}(\reels^3)$, we can choose a probability measure $a$ on $\relatifs_+$ and a non-trivial measure $\phi_a$ on $\reels^3$ such that
\begin{eqnarray*}
\sum_{n \in \relatifs_+} P^{\delta n}(x, A) a[n] \ge \phi_a[A] \quad \text{for all $x \in C$ and $A \in \mathscr{B}(\reels^3)$}.
\end{eqnarray*}

The following concepts are closely related to petite sets. We call $\{X^x_{\delta n}\}_{n \in \relatifs_+}$ is an irreducible, if there exists a finite measure $\phi$ on $\mathscr{B}(\reels^3)$ such that if $\phi[A] > 0$ then
\begin{eqnarray*}
\sum_{n = 1}^{\infty} P^{\delta n}(x, A) > 0 \quad \text{for any $x \in \reels^3_+$}.
\end{eqnarray*}

Moreover, we call $\{X^x_{\delta n}\}_{n \in \relatifs_+}$ is a $T$-chain, if there exist $k \in \relatifs_+$ and non-trivial kernel $T$ such that 
\begin{itemize}
\item $T(x, \reels^3) > 0$ for any $x \in \reels^3_+$,\vspace{-2mm}\\
\item $x \mapsto T(x, A)$ is lower semi-continuous for any $A \in \mathscr{B}(\reels^3)$,\vspace{-2mm}\\ 
\item $P^{\delta k}(x, A) \ge T(x, A)$ for any $x \in \reels^3_+$ and $A \in \mathscr{B}(\reels^3)$.
\end{itemize}

We consider a relation between the existence of petite compact sets and T-chain properties. The following lemma is well known, see Theorem 3.2 in \cite{MeynTweedie1}.

\begin{lemma*}
Suppose that $\{X^x_{\delta n}\}_{n \in \relatifs_+}$ is an irreducible $T$-chain. Then, every compact set is petite.
\end{lemma*}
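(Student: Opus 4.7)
The statement is a classical theorem of Meyn--Tweedie, so the plan is to reconstruct its standard argument from the two hypotheses. The T-chain property supplies a topologically regular minorization, while irreducibility allows minorizing measures based at different points to be merged through the sampled kernel $K_a(x,\cdot)=\sum_n P^{\delta n}(x,\cdot)\,a[n]$ for probability distributions $a$ on $\relatifs_+$.

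First I would establish \emph{local} petiteness. Fix $x_0 \in \reels^3_+$; since $T(x_0, \reels^3_+) > 0$, choose a Borel set $A_0$ and $\eta > 0$ with $T(x_0, A_0) > \eta$. Lower semi-continuity of $x \mapsto T(x, A_0)$ produces an open neighborhood $U_{x_0}$ of $x_0$ on which $T(x, A_0) > \eta/2$, and the pointwise minorization $P^{\delta k}(x, \cdot) \ge T(x, \cdot)$ then gives the uniform bound $P^{\delta k}(x, A_0) \ge \eta/2$ for all $x \in U_{x_0}$. Refining $A_0$ within a $\psi$-positive set (where $\psi$ is a maximal irreducibility measure supplied by irreducibility), this local bound upgrades to a petite minorization on $U_{x_0}$.

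Next, given a compact $C \subset \reels^3_+$, cover it by finitely many such neighborhoods $U_{x_0^{(1)}}, \dots, U_{x_0^{(m)}}$. Each corresponding minorizing measure $\phi_j$ charges a $\psi$-positive set, so by irreducibility there exists $n_j$ and $\gamma_j > 0$ with $\phi_j K_{a_j}(\cdot) \ge \gamma_j\, \nu(\cdot)$ for a single fixed probability measure $\nu$ supported in a $\psi$-positive Borel set $B$. Convolving the sampling distributions, the union $\bigcup_j U_{x_0^{(j)}}$ enjoys a uniform petite minorization concentrated on $B$, and since $C$ lies inside this union, $C$ itself is petite.

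The main obstacle is the merging step: without irreducibility, a finite union of petite sets need not be petite because the minorizing measures $\phi_j$ can be mutually singular. The argument depends crucially on the fact that in an irreducible chain all petite measures are comparable modulo $\psi$ (this is the content of Proposition 5.5.5 of Meyn--Tweedie), which is itself proved via the coupling through $\psi$ sketched above; together with the T-chain-based local minorization, this is exactly what reduces the problem to a routine finite-cover argument.
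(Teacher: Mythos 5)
The paper does not actually prove this lemma: it is imported verbatim from the literature with the single line ``see Theorem 3.2 in Meyn--Tweedie,'' so there is no in-paper argument to compare against. Your reconstruction is essentially the standard Meyn--Tweedie proof and is correct in outline, but one step is compressed to the point of being imprecise: knowing $P^{\delta k}(x,A_0)\ge \eta/2$ uniformly on $U_{x_0}$ only bounds the kernel on the \emph{single} set $A_0$; to get a petite minorization (a bound uniform over all Borel sets) you must take $A_0$ itself to be \emph{petite} --- not merely $\psi$-positive --- and then compose kernels, $K_{a}(x,B)\ge\int_{A_0}P^{\delta k}(x,dy)\,K_{a'}(y,B)\ge(\eta/2)\,\phi_{A_0}(B)$. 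That such a petite $A_0$ with $T(x_0,A_0)>0$ exists is exactly where irreducibility enters (it yields a countable increasing cover of the state space by petite sets, so the nontrivial measure $T(x_0,\cdot)$ must charge one of them). Once this is fixed, your finite-subcover-plus-merging step goes through, with the merging justified by the comparability of petite sets for irreducible chains as you note. It is worth knowing that the textbook route is slightly slicker and avoids the merging entirely: with the increasing petite cover $\{A_i\}$, the open sets $O_i=\{x: T(x,A_i)>1/i\}$ cover the space by lower semicontinuity, compactness puts $C$ inside a single $O_{i^*}$, so $\inf_{x\in C}K_a(x,A_{i^*})>0$, and a set that uniformly reaches a petite set under a sampled kernel is itself petite. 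Both arguments are legitimate; yours trades the ``uniform accessibility'' lemma for the ``finite unions of petite sets are petite'' lemma.
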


Furthermore, we call $x^* \in \reels^3$ is reachable, if for any open set $G \in \mathscr{B}(\reels^3)$ with $x^* \in G$,
\begin{eqnarray*}
\sum_{n=0}^{\infty}P^{\delta n}(y, G) > 0 \quad \text{for any $y \in \reels^3_+$}.
\end{eqnarray*}

\begin{proof}[{\bf Proof of Proposition \ref{geometric ergodicity}}]
We only have to prove that there exists $\delta > 0$ such that $\{X^x_{\delta n}\}_{n \in \relatifs_+}$ is an irreducible $T$-chain. First, we check the $T$-chain property.

Denote the $i$-th jump time of $N_t^x$ by $\tau^x_i$. Let $\Delta\tau_i^x$ be the interval time between the $(i-1)$-th  and $i$-th jump of $N^x_t$, i.e. $\Delta\tau_i^x = \tau^x_i - \tau^x_{i-1}$. As mentioned in Lemma A.4 of  \cite{ClinetYoshida}, $\Delta\tau_i^x$ has the conditional probability density (with respect to Lebesgue measure) 
\begin{eqnarray*}
f^{\Delta\tau_i^x} \big( t  \big|  X^x_{\tau^x_{i-1}} = y \big) = \left( \mu + y_1e^{-\beta t} \right) \exp\left( \int_0^t \mu + y_1e^{-\beta s} ds \right), 
\end{eqnarray*}
where $y = (y_1, y_2, y_3)' \in \reels^3_+$. Moreover, it is known that
\begin{eqnarray*}
f^{(\Delta\tau_1^x, \dots, \Delta\tau_i^x)} ( t_1, \dots, t_i  | y)
&=& f^{\Delta\tau_i^x} \big( t_i  | X^x_{\tau^x_{i-1}} = X( t_1, \dots, t_{i-1} |  y) \big) \\
&& \times  f^{\Delta\tau_{i-1}^x} \big( t_{i-1}  | X^x_{\tau^x_{i-2}} = X( t_1, \dots, t_{i-2} |  y) \big) \\
&& \times \cdots \times f^{\Delta\tau_1^x} \big( t_1  | X^x_{\tau^x_1} = y \big),
\end{eqnarray*}
where denote $ \sum_{k=i}^j t_k$ by $T_{(i, j)}$ and 
\begin{eqnarray*}
X( t_1, \dots, t_j |  y)
 &=&  \left(
    \begin{array}{c}
       X^{(1)} ( t_1, \dots, t_j |  y)\\
       X^{(2)} ( t_1, \dots, t_j |  y)\\
       X^{(3)} ( t_1, \dots, t_j |  y)
    \end{array}
  \right)\\
&=&  \left(
    \begin{array}{c}
      y_1e^{-\beta T_{(1, j)}} + \sum_{l=1}^j \alpha e^{-\beta T_{(l+1, j)}}  \\
      \left( y_1T_{(1, j)} + y_2 \right) e^{-\beta T_{(1, j)}} + \sum_{l=1}^j \alpha T_{(l+1, j)} e^{ -\beta T_{(l+1, j)} }  \\
      \left( y_1T_{(1, j)}^2 + 2y_2T_{(1, j)} + y_3 \right) e^{-\beta T_{(1, j)}} + \sum_{l=1}^j \alpha T_{(l+1, j)}^2 e^{ -\beta T_{(l+1, j)} }
    \end{array}
  \right).
\end{eqnarray*}
Note that $f^{(\Delta\tau_1^x, \dots, \Delta\tau_i^x)} ( t_1, \dots, t_i  | y)$ is obviously smooth in $y$. Then, for any $\delta > 0$ and $A \in \mathscr{B}(\reels^3)$,
\begin{eqnarray*}
P^{\delta}(x, A)
&=& P\left[ X^x_{\delta}  \in A \right]\\
&\ge& P\left[ X^x_{\delta}  \in A , \ \sharp \{j | \tau^x_j < \delta \} = 3 \right]\\
&=& \int_{\reels_+^4} 1_{\{ \check{X}(\delta; t_1, t_2, t_3 |  x) \in A \}} 1_{\{ T_{(1,3)} < \delta \} \cap \{ T_{(1,4)} \ge \delta \}} 
f^{(\Delta\tau_1^x, \Delta\tau_2^x, \Delta\tau_3^x, \Delta\tau_4^x)} ( t_1, t_2, t_3, t_4  | x) dt_1dt_2dt_3dt_4,
\end{eqnarray*}
where
\begin{eqnarray*}
\check{X}(\delta; t_1, t_2, t_3 |  x) 
=  \left(
    \begin{array}{c}
      x_1e^{-\beta \delta} + \sum_{l=1}^3 \alpha e^{-\beta (\delta - T_{(1, l)})}  \\
      \left( x_1\delta + x_2 \right) e^{-\beta \delta} + \sum_{l=1}^3 \alpha (\delta - T_{(1, l)}) e^{ -\beta (\delta - T_{(1, l)}) }  \\
      \left( x_1\delta^2 + 2x_2\delta + x_3 \right) e^{-\beta \delta} + \sum_{l=1}^3 \alpha (\delta - T_{(1, l)})^2 e^{ -\beta (\delta - T_{(1, l)}) }
    \end{array}
  \right)
\end{eqnarray*}
and it is obviously smooth in $x$. However, the indicator function $1_{\{ \check{X}(\delta; t_1, t_2, t_3 |  x) \in A \}}$ is not always lower semi-continuous in $x$. Thus, we consider a change of variable for the map $H_{x,\delta} : (t_1, t_2, t_3) \mapsto \check{X}(\delta; t_1, t_2, t_3 | x)$, as in Proof of Lemma A.3 of \cite{ClinetYoshida}. 
Denote the Jacobian matrix of $H_{x,\delta}$ at $(t_1, t_2, t_3)$ by $J_{\delta}(t_1, t_2, t_3) $. Then, completely elementary calculations leads
\begin{eqnarray*}
J_{\delta}(t_1, t_2, t_3)  = (J_{i,j})_{i,j = 1,2,3},
\end{eqnarray*}
where for $j=1,2,3$
\begin{eqnarray*}
J_{1,j} = \sum_{l=j}^3 \alpha\beta e^{-\beta (\delta - T_{(1,l)})}, \ 
J_{2,j} = \sum_{l=j}^3 \alpha\left\{ \beta(\delta - T_{(1,l)})  - 1 \right\}e^{-\beta (\delta - T_{(1,l)})}, \\
\text{and} \quad J_{3,j} = \sum_{l=j}^3  \alpha\left\{ \beta(\delta - T_{(1,l)})^2  -  2 (\delta - T_{(1,l)})  \right\} e^{-\beta (\delta - T_{(1,l)})}.
\end{eqnarray*}
The determinant of the Jacobian matrix has the following representation.
\begin{eqnarray*}
|J_{\delta}(t_1, t_2, t_3)| 
&=& \prod_{l=1}^3 \alpha \beta e^{-\beta (\delta - T_{(1,l)})} \times \left|
    \begin{array}{ccc}
     1 & 1& 1\\
    \big( \delta - T_{(1,1)} \big)& \big( \delta - T_{(1,2)} \big) & \big( \delta - T_{(1,3)} \big) \\
    \big( \delta - T_{(1,1)} \big)^2 & \big( \delta - T_{(1,2)} \big)^2 & \big( \delta - T_{(1,3)} \big)^2 \\
    \end{array}
  \right|.
\end{eqnarray*}
It is a Vandermonde determinant and thus not zero if $(t_1, t_2, t_3) = (\tau, \tau, \tau)$ for $\tau \in (0,\delta/3)$. We consider a neighborhood at the such point $(t_1, t_2, t_3) = (\tau, \tau, \tau)$. Set $B(t_1, t_2, t_3, t_4) = \{ T_{(1,3)} < \delta \} \cap \{ T_{(1,4)} > \delta \} \cap \{(t_1, t_2 ,t_3) \in (\tau-\ep,\tau+\ep)^3\}$ for sufficient small $\ep>0$.  Then, we get a non-trivial component $T(x, A)$ as below.
\begin{eqnarray*}
P^{\delta}(x, A)
&\ge& \int_{\reels_+^4} 1_{\{ \check{X}(\delta; t_1, t_2, t_3 |  x) \in A \}} 1_{\{ T_{(1,3)} < \delta \} \cap \{ T_{(1,4)} \ge \delta \}} 
f^{(\Delta\tau_1^x, \Delta\tau_2^x, \Delta\tau_3^x, \Delta\tau_4^x)} ( t_1, t_2, t_3, t_4  | x) dt_1dt_2dt_3dt_4\\
&\ge&  \int_{\reels_+^4} 1_{\{ (y_1, y_2, y_3) \in A \}} 1_{B(H^{-1}_{x, \delta}(y_1, y_2, y_3), t_4)}  
f^{(\Delta\tau_1^x, \Delta\tau_2^x, \Delta\tau_3^x, \Delta\tau_4^x)} ( H^{-1}_{x, \delta}(y_1, y_2, y_3), t_4 | x) \\
&&\left|J_{\delta} \left( H^{-1}_{x, \delta}(y_1, y_2, y_3) \right)\right|^{-1} dy_1dy_2dy_3dt_4
=: T(x, A). 
\end{eqnarray*}
Since $B(t_1, t_2, t_3, t_4)$ is a countable union of open intervals, continuity of $H^{-1}_{x,\delta}$ in $x$ leads that $x \mapsto T(x,A)$ is lower semi-continuous. Thus, $\{X^x_{\delta n}\}_{n \in \relatifs_+}$ is a $T$-chain. 

Finally, we prove that $\{X^x_{\delta n}\}_{n \in \relatifs_+}$ is irreducible. Since $\{X^x_{\delta n}\}_{n \in \relatifs_+}$ is a $T$-chain, we only have show that there exists a reachable point $x^* \in \reels^3_+$, i.e. for any open set $O \in \mathscr{B}(\reels^3)$ containing $x^*$,
\begin{eqnarray*}
\sum_{n=0}^{\infty} P^{\delta n}(y, O) > 0 \quad \text{for any $y \in \reels^3_+$,}
\end{eqnarray*}
see Proposition 6.2.1 in \cite{MeynTweedieBook}. However, we can easily show that $(0,0,0)$ is a reachable point. Indeed, if a jump will never occur, for any neighborhood $O$ of $(0,0,0)$, $X^x_{\delta n} \in O$ for sufficient large $n \in \naturels$. By the form of $f^{\Delta \tau_1^x} \big( t  \big| x\big)$, the probability there is no jump on $[0, \delta n]$ is positive. Thus, we get the conclusion.
  \end{proof}


\subsection{ Proofs of Section 4}
In this subsection, we will prove Theorem \ref{Main Thm4}. For this purpose, it is enough to confirm that there exist some constants satisfying (\ref{restriction}) and the conditons [A1]-[A3], [B0]-[B4], [C1] hold. We explain each condition separately by dividing each small section.

\subsubsection{Proof of Proposition \ref{A1} (Condition \textnormal{[A1]})}
In Markovian framework, as mentioned in \cite{KusuokaYoshida} and \cite{Yoshidaparmix}, the mixing property is derived from the ergodicity. Concretely, the geometric mixing property is reduced to the following property;

\begin{description}
\im[[A1$'$\!\!]] There exists a positive constant $a$ such that
\begin{eqnarray*}
\sup_{\substack{f \in \mathscr{FB}_{[t, \infty)} \\ : \left\| f \right\|_{\infty}  \le 1 }} \left\| E\left[f \left| X_s\right.\right] - E[f] \right\|_{L^1(P)} < a^{-1}e^{-a(t-s)} \quad \text{for any $t > s > 0$}.
\end{eqnarray*}
\end{description}

\begin{proposition*}
The Markovian property in Proposition \ref{X Markovian property} and \textnormal{[A1$'$]} lead \textnormal{[A1]}.
\end{proposition*}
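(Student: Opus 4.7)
The plan is to combine the Markov property with [A1$'$] via a routine conditioning argument. First I would upgrade Proposition \ref{X Markovian property} from the one-time-marginal form $E[f(X_t)|\mathscr{F}^x_s]=E[f(X_t)|X_s]$ to the global form
$$E[g\mid \mathscr{B}_{[0,s]}] \;=\; E[g\mid X_s] \qquad \text{a.s.}$$
valid for every bounded $\mathscr{B}_{[t,\infty)}$-measurable $g$. The mechanism is the standard monotone class extension: on finite-dimensional cylinders $g=h_1(X_{u_1})\cdots h_n(X_{u_n})$ with $t\le u_1<\cdots<u_n$, iterated conditioning combined with Proposition \ref{X Markovian property} applied successively from $u_n$ down to $u_1$ (using the time-homogeneous kernel from Proposition \ref{X homo property} to evaluate each innermost conditional expectation as a measurable function of the relevant $X_{u_k}$) expresses $E[g\mid \mathscr{F}^x_s]$ as a measurable function of $X_s$ alone. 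A monotone class/$\pi$-$\lambda$ argument then extends the identity to all bounded $\sigma(X_u; u\ge t)$-measurable $g$, and the right-continuity of $\{\mathscr{F}^x_t\}$ recorded in Subsection 3.2 identifies $\mathscr{B}_{[0,s]}$ with $\mathscr{F}^x_s$ so the identity holds with the $\mathscr{B}$-sigma-field of interest.

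Second, with the global Markov property in hand, for any $f\in\mathscr{F}\mathscr{B}_{[0,s]}$ and $g\in\mathscr{F}\mathscr{B}_{[t,\infty)}$ with $\|f\|_\infty\le1$ and $\|g\|_\infty\le1$ I would compute
$$E[fg]-E[f]E[g] \;=\; E\bigl[f\,(E[g\mid \mathscr{B}_{[0,s]}]-E[g])\bigr] \;=\; E\bigl[f\,(E[g\mid X_s]-E[g])\bigr],$$
where the first equality uses $f\in\mathscr{F}\mathscr{B}_{[0,s]}$ and the tower property, and the second uses the extended Markov identity. An $L^\infty$–$L^1$ bound then yields
$$\bigl|E[fg]-E[f]E[g]\bigr| \;\le\; \|f\|_\infty\,\bigl\|E[g\mid X_s]-E[g]\bigr\|_{L^1(P)} \;<\; a^{-1}e^{-a(t-s)}$$
by [A1$'$], which is exactly [A1] with the same constant $a$.

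There is no real obstacle here: the only non-trivial step is the extension of the Markov property from individual one-time marginals to the entire future $\sigma$-field $\mathscr{B}_{[t,\infty)}$, but this is a standard monotone class exercise once one has Proposition \ref{X Markovian property} and the time-homogeneity from Proposition \ref{X homo property}. The rest is a one-line Hölder bound combined with the tower property.
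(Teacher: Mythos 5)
Your proposal is correct and follows essentially the same route as the paper: the tower property to insert $E[\,\cdot\,|\mathscr{B}_{[0,s]}]$, the Markov property to replace it by $E[\,\cdot\,|X_s]$, and an $L^\infty$--$L^1$ bound followed by [A1$'$]. The only difference is that you spell out the monotone-class extension of Proposition \ref{X Markovian property} from one-time marginals to all of $\mathscr{B}_{[t,\infty)}$, a step the paper uses implicitly without comment.
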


\begin{proof}
For any $f \in \mathscr{FB}_{[0,s]}$ and $g \in \mathscr{FB}_{[t,\infty)}$ with $\left\| f \right\|_{\infty} \le 1$ and $\left\| g \right\|_{\infty} \le 1$,
\begin{eqnarray*}
\left| E[fg] - E[f]E[g] \right| 
&=& \left|E\left[f(g-E[g])\right]\right|
= \left|E\left[fE\left[g-E[g]\left|\mathscr{B}_{[0,s]}\right.\right]\right]\right|\\
&\le& \left\|E\left[g-E[g]\left|\mathscr{B}_{[0,s]}\right.\right]\right\|_{L^1(P)}
= \left\|E\left[g\left|X_s\right.\right] - E[g]\right\|_{L^1(P)}
\le a^{-1}e^{-a(t-s)}.
\end{eqnarray*}
 \end{proof}

\begin{proof}[{\bf Proof of Proposition \ref{A1} }]

We confirm that [A1$'$] follows from Proposition \ref{geometric ergodicity}. Let $s \le t$ and $f \in \mathscr{FB}_{[t,\infty)}$ with $\|f\|_{\infty} \le 1$. From the Markovian property, we have
\begin{eqnarray*}
E\left[f \left| X_s \right.\right]
= E\left[E\left[f \left|\mathscr{B}_{[0,t]}\right.\right]\left| X_s \right.\right]
= E\left[E\left[f \left|X_t \right.\right]\left| X_s \right.\right].
\end{eqnarray*}
There exists a measurable function $g$ such that $E\left[f \left|X_t \right.\right] = g(X_t)$ and $\|g\|_{\infty} \le 1$. From Proposition \ref{X homo property}, we get
\begin{eqnarray*}
E\left[f \left| X_s \right.\right]
= E\left[ g(X_t)\left| X_s \right.\right] = \int_{\reels^3} g(y) P^{t-s}(X_s, dy).
\end{eqnarray*}
On the other hand, we have
\begin{eqnarray*}
E\left[f\right]
= E\left[g(X_t)\right]
= \int_{\reels^3} g(y) P^t(X_0, dy).
\end{eqnarray*}
Therefore, by using Proposition \ref{geometric ergodicity},
\begin{eqnarray*}
&&\sup_{\substack{f \in \mathscr{FB}_{[t, \infty)} \\ : \left\| f \right\|_{\infty}  \le 1 }} \left\| E\left[f \left| X_s\right.\right] - E[f] \right\|_{L^1(P)} \\
&\le& \sup_{g : \| g \|_{\infty} \le 1} \left\|\int_{\reels^3} g(y) P^{t-s}(X_s, dy) - \int_{\reels_+} g(y) P^{t}(X_0, dy)\right\|_{L^1(P)} \\
&\le&  \sup_{g : \| g \|_{\infty} \le 1} \left\{ \left\|\int_{\reels^3} g(y) \left(P^{t-s}(X_s, dy) - P^{\bar{X}}(dy) \right)\right\|_{L^1(P)} + \left\| \int_{\reels^3} g(y) \left(P^{t}(X_0, dy) - P^{\bar{X}}(dy) \right) \right\|_{L^1(P)} \right\}\\
&\le&  E\left[\left\|P^{t-s}(X_s, \cdot) - P^{\bar{X}} \right\|_{e^{M\cdot}}\right] + E\left[\left\| P^{t}(X_0, \cdot) - P^{\bar{X}}\right\|_{e^{M\cdot}}\right]\\
&\le& E\left[ B(e^{M X_s } + 1)r^{t-s} \right] + E\left[ B(e^{M X_0} + 1)r^{t}\right]\\
&=& r^{t-s}B\left(  E\left[ e^{M X_s} \right] + 1 + 2r^s\right).
\end{eqnarray*}
Finally, from Proposition \ref{X strong moment}, we may choose sufficient small $a>0$ that satisfies [A1$'$].
  \end{proof}


\subsubsection{Condition \textnormal{[A2]}}
$Z_0 \in \bigcap_{p>1}L^p(P)$ and $P[Z_0]=0$ are obvious.  We can write each component of $Z^t_{t+h}$ as
\begin{eqnarray*}
\int_t^{t+h} \frac{p_1(X_s)}{\lambda_s^2} d\tilde{N}_s + \int_t^{t+h} \frac{p_2(X_s)}{\lambda_s^2} ds - E\left[ \int_t^{t+h} \frac{p_2(X_s)}{\lambda_s^2} ds \right]
\end{eqnarray*}
where $p_1$ and $p_2$ are $3$-variable polynomial functions. From Proposition \ref{X strong moment}, we have $\sup_t ||X_t||_{L^p(P)} < \infty$ for any $p>1$. By considering $t \in [0, T]$ for an arbitrary $T>0$, $\int_0^t p_1(X_s)/\lambda_s^2 d\tilde{N}_s$ is a square integrable martingale, see Theorem 18.8 in \cite{LS}. Thus, we immediately get $E\big[Z^t_{t+\Delta}\big] = 0$ for any $\Delta > 0$ and $t  > 0$.

The rest of the proof is $\sup_{t \in \reels_+, 0 \le h \le \Delta} \left\| Z^t_{t+h}\right\|_{L^p(P)} < \infty$.
When we consider the $L^p$ boundedness, it is enough to consider the form of $p = 2^k$ for $k \in \naturels$.
We get
\begin{eqnarray*}
\left\|E\left[ \int_t^{t+h} \frac{p_2(X_s)}{\lambda_s^2}  ds \right]\right\|_{L^p(P)}
< \frac{\sup_s E[|p_2(X_s)|] }{\mu_0^2} h.
\end{eqnarray*}
Moreover, since $h^{-1} ds$ is a probability measure on $[t, t+h]$, by the Jensen's inequality,
\begin{eqnarray*}
\left\| \int_t^{t+h} \frac{p_2(X_s)}{\lambda_s^2} ds \right\|_{L^p(P)}
&\le& \left(E\left[ \int_t^{t+h} \left(  \frac{p_2(X_s)}{\lambda_s^2}h \right)^p h^{-1}ds \right]\right)^{\frac{1}{p}}\\
&\le& \frac{\sup_s \|p_2(X_s)\|_{L^p(P)}}{\mu_0^2} h.
\end{eqnarray*}

On the other hand,
\begin{eqnarray*}
\mathscr{M}_h = \int_t^{t+h} \frac{p_1(X_s)}{\lambda_s^2} d\tilde{N}_s
\end{eqnarray*}
is also a square integrable martingale. Then, the Burkholder-Davis-Gundy inequality leads that there exists a positive constant $C_k$ (take again new $C_k$ in the last step) such that
\begin{eqnarray*}
E\left[\left| \mathscr{M}_h \right|^{2^k}\right]
&\le& C_k E\left[ \left|[\mathscr{M} ]_h \right|^{2^{k-1}}\right] \nonumber \\
&=& C_k E\left[ \left| \int^{t+h}_t \left( \frac{p_1(X_s)}{\lambda_s^2}  \right)^2 dN_s \right|^{2^{k-1}}\right] \nonumber \\
&\le& C_k \left( E\left[ \left| \int^{t+h}_t \left( \frac{p_1(X_s)}{\lambda_s^2} \right)^2 d\tilde{N}_s \right|^{2^{k-1}}\right] + E\left[ \left| \int^{t+h}_t \left( \frac{p_1(X_s)}{\lambda_s^2} \right)^2 \lambda_sds \right|^{2^{k-1}}\right]\right),
\end{eqnarray*}
where $[\mathscr{M}]_h$ represents the quadratic variation of $\mathscr{M}_h$. We used the Jensen's inequality in the last estimation. By induction, one gets some constant $Q_k$ (take again new $Q_k$ in the last step) such that
\begin{eqnarray}
\label{B.H.}
E\left[ \left| \mathscr{M}_h  \right|^{2^k}\right]
&\le& Q_k\sum_{j = 1}^k E\left[ \left| \int^{t+h}_t \left( \frac{p_1(X_s)}{\lambda_s^2} \right)^{2^j} \lambda_sds \right|^{2^{k-j}}\right]\nonumber \\
&\le& Q_k\sum_{j = 1}^k E\left[ \int^{t+h}_t \left( \frac{p_1(X_s)}{\lambda_s^2} \right)^{2^k} \lambda_s^{2^{k-j}} h^{2^{k-j}} h^{-1}ds \right]\nonumber \\
&\le& Q_k\ (h + 1)^{2^{k-1}}.
\end{eqnarray}
Therefore, for any $\Delta > 0$ and $p>0$, $\sup_{t \in \reels_+, 0 \le h \le \Delta} \left\| Z^t_{t+h}\right\|_{L^p(P)} < \infty$ holds. Then, the conditon [A2] is verified.


\subsubsection{Condition \textnormal{[A3]}}
The conditon [A3] follows from Lemma 3.15. and the proof of Lemma A.7. in \cite{ClinetYoshida}.

\subsubsection{Condition \textnormal{[B0]}}
(i), (ii) and (iv) are obvious. (iii) immediately follows a square integrable martingale property:
\begin{eqnarray*}
Cov\left[ \int_0^t \frac{p_1(X_s)}{\lambda_s} d\tilde{N}_s , \int_0^t \frac{p_2(X_s)}{\lambda_s} d\tilde{N}_s \right]
= E\left[ \int_0^t \frac{p_1(X_s)p_2(X_s)}{\lambda_s^2} d[\tilde{N}]_s \right]
= E\left[ \int_0^t \frac{p_1(X_s)p_2(X_s)}{\lambda_s} ds \right]
\end{eqnarray*}
for any $3$-variable polynomial functions $p_1$ and $p_2$.

\subsubsection{Condition \textnormal{[B1]}}
We take any constant $L>1$. From (\ref{1st deriv}) and (\ref{B.H.}), we immediately deduce that for any $k \in \naturels$
\begin{eqnarray*}
E \left[ \left| T^{-\frac{1}{2}} l_a(\theta_0)\right|^{2^k} \right]
\ \le \  Q_k\ (1 + T^{-1})^{2^{k-1}}.
\end{eqnarray*}
Therefore, the conditon [B1] holds for any $q_1 > 1$. In particular, we can choose $q_1$ satisfying $q_1 >  3L$.

\subsubsection{Condition \textnormal{[B2]}}
Let $L, q_1$ and $q_3$ be positive constants with $L>1, q_1 >  3L$ and $q_3 > \frac{q_1L}{q_1-3L}$. We arbitrary set a positive constant $q_2$ with $q_2 > \max\left(3, \frac{3q_1L}{q_1-3L}\right)$ for given constants $L$ and $q_1$.
Let $Y_t(\theta) = \left(X_t^{(1)}(\theta_0), X_t^{(1)}(\theta), X_t^{(2)}(\theta), X_t^{(3)}(\theta), X_t^{(4)}(\theta) \right)$ for $\theta = (\mu, \alpha, \beta)$. From the relation 
\begin{eqnarray*}
\partial_{\theta} X_t^n(\theta) = \left( \begin{array}{ccc} 0 \\ \alpha^{-1}X_t^n(\theta) \\ -X_t^{n+1}(\theta) \end{array}\right)
\end{eqnarray*}
and a verification of the permutation rule of the symbol $\partial_{\theta}$ and $\int_0^T$, we can write, for both of the case $k = 2$ and $k = 3$,
\begin{eqnarray*}
&&T^{\frac{\gamma}{2}}\left(T^{-1}l_{a_1 \cdots a_k}(\theta) - \nu_{a_1 \cdots a_k}(\theta) \right) \\
&=& T^{\frac{\gamma}{2} -1} \int_0^T \frac{p_1(Y_s(\theta))}{\lambda_s^4(\theta)} d\tilde{N}_s + T^{\frac{\gamma}{2}} \left\{ \frac{1}{T}\int_0^T \frac{p_2(Y_s(\theta))}{\lambda_s^4(\theta)} ds -  E\left[\frac{1}{T} \int_0^T \frac{p_2(Y_s(\theta))}{\lambda_s^4(\theta)} ds \right] \right\}
\end{eqnarray*}
with some polynomial functions $p_1$ and $p_2$. Lemma A.5. in \cite{ClinetYoshida} guarantees 
\begin{eqnarray}
\label{intensity moment}
\sup_t \sum_{i=0}^4 \| \sup_{\theta \in \Theta} \partial_{\theta}^i \lambda_t(\theta) \|_{L^p(P)} < \infty
\end{eqnarray}
for any $p>1$. Thus, we have $\sup_t ||\sup_{\theta \in \Theta} Y_t(\theta)||_{L^p(P)} < \infty$ for any $p>1$. Moreover, $Y_t(\theta)$ is $\sigma\left(N_s ; s \le t \right)$-predictable. From the restriction of (\ref{restriction}), $\frac{\gamma}{2} -1 < -\frac{1}{2}$ holds. Then, in the same method of the proof of the conditon [A2], we can see that
\begin{eqnarray*}
\sup_{T>0, \theta \in \Theta} E\left[ \left| T^{\frac{\gamma}{2} -1} \int_0^T \frac{p_1(Y_s(\theta))}{\lambda_s^4(\theta)} d\tilde{N}_s\right|^{2^k} \right]
< \infty
\end{eqnarray*}
for any $k \in \naturels$. 

The later term is estimated by using the ergodicity of $X_t^{(1)}(\theta_0)$. Let 
\begin{eqnarray*}
\tilde{Y}(s, t, \theta)
&=& \Bigg(X_t^{(1)}(\theta_0), 
\int_{(s,t)} \alpha e^{-\beta(t-u)}dN_u^{x_1},  
\int_{(s,t)} \alpha(t-u) e^{-\beta(t-u)}dN_u^{x_1}, \\
&& \int_{(s,t)} \alpha(t-u)^2 e^{-\beta(t-u)}dN_u^{x_1}, 
\int_{(s,t)} \alpha(t-u)^3 e^{-\beta(t-u)}dN_u^{x_1} \Bigg).
\end{eqnarray*}
Denote $D_{\uparrow}\big(\reels_+^5, \reels\big)$ as the set of functions $\psi: \reels_+^5 \to \reels$ that satisfy:
\begin{itemize}
\item $\psi$ are of class $C^1(\reels_+^5)$. 
\item $\psi$ and $|\bigtriangledown \psi|$ are polynomial growth. 
\end{itemize}
By replacing $X^{\alpha}(t, \theta)$ by $Y_t(\theta)$ and $\tilde{X}^{\alpha}(s, t, \theta)$ by $\tilde{Y}(s, t, \theta)$
in the proof of Lemma A.6. and using Lemma 3.16. in \cite{ClinetYoshida}, we can get the following ergodicity property: There exist a mapping $\pi:  D_{\uparrow}\big(\reels_+^5, \reels\big) \times \Theta \to \reels$ and a constant $\gamma' \in \left( 0, \frac{1}{2} \right)$ such that for any $\psi \in D_{\uparrow}\big(\reels_+^5, \reels\big)$ and for any $p>1$,
\begin{eqnarray*}
\sup_{\theta \in \Theta} T^{\gamma'} \left\| \frac{1}{T} \int_0^T \psi(Y_s(\theta)) ds - \pi(\psi, \theta)\right\|_{L^p(P)} \to 0 \quad \text{as $T \to \infty$}.
\end{eqnarray*}
However, in the case of the exponential Hawkes process, we can choose $\gamma' \in \left( 0, \frac{1}{2} \right)$ arbitrarily. This arbitrariness follows from the fact $\|Y_t(\theta) - \bar{Y}_t(\theta) \|_{L_1(P)}$ is exponentially decreasing uniformly in $\theta$ for some stationary process $\bar{Y}_t(\theta)$, see the proof of the stability
condition part in Lemma A.6. of \cite{ClinetYoshida}. Therefore, by taking $\gamma' \in \left( 0, \frac{1}{2} \right)$ and $\gamma = 2\gamma'$ satisfying $\frac{2}{3} + \max\left(\frac{L}{q_2}, \frac{L}{3q_3}\right) < \gamma < 1 - \frac{L}{q_1}$, we get 
\begin{eqnarray*}
\left\| T^{\frac{\gamma}{2}} \left\{ \frac{1}{T}\int_0^T \frac{p_2(Y_s(\theta))}{\lambda_s^4(\theta)} ds -  E\left[\frac{1}{T} \int_0^T \frac{p_2(Y_s(\theta))}{\lambda_s^4(\theta)} ds \right] \right\} \right\|_{L^p(P)} \to 0 \quad \text{as $T \to \infty$}
\end{eqnarray*}
for any $p>1$. It means that the conditon [B2] holds for any $q_2 > \max\left(3, \frac{3q_1L}{q_1-3L}\right)$ and some $\gamma$ with $\frac{2}{3} + \max\left(\frac{L}{q_2}, \frac{L}{3q_3}\right) < \gamma < 1 - \frac{L}{q_1}$.

\subsubsection{Condition \textnormal{[B3]}}
We only have to show that there exist an open set $\tilde{\Theta}$ including $\theta_0$ and a positive constant $T_0$ such that
\begin{eqnarray}
\label{[B3]}
\inf_{T>T_0, \theta \in \tilde{\Theta},|x|=1} \left| x' \nu_{a b} (\theta) \right| > 0.
\end{eqnarray}
Because, if (\ref{[B3]}) holds, continuity of $\nu_{a b}(\theta)$ and $x' \nu_{a b}(\theta) \neq 0$ lead
\begin{eqnarray*}
\left| \int_0^1 x' \nu_{a b} (\theta_1 + s(\theta_2 - \theta_1))ds \right| > \inf_{\theta \in \tilde{\Theta}} \left| x' \nu_{a b} (\theta) \right|
\end{eqnarray*}
for any $\theta_1, \theta_2 \in \tilde{\Theta}, T>T_0$ and $x$ with $|x|=1$.
Therefore, we consider to prove (\ref{[B3]}). We can write
\begin{eqnarray*}
\nu_{a b} (\theta)
= - E\left[ \frac{1}{T} \int^T_0  \frac{\left(\partial_{\theta}\lambda_s (\theta) \right)^{\otimes 2}}{\lambda_s^2(\theta)} \lambda_s(\theta_0) ds \right] + E\left[ \frac{1}{T} \int^T_0  \frac{\partial_{\theta}^2 \lambda_s(\theta)}{\lambda_s(\theta)} \left( \lambda_s(\theta_0) - \lambda_s(\theta) \right)ds \right].
\end{eqnarray*}
With the help of (\ref{intensity moment}), for the first term, we have
\begin{eqnarray*}
\left| g_T - E\left[ \frac{1}{T} \int^T_0  \frac{\left(\partial_{\theta}\lambda_s (\theta) \right)^{\otimes 2}}{\lambda_s^2(\theta)} \lambda_s(\theta_0) ds \right] \right|
\le  \frac{|\theta_0 - \theta| }{T} E\left[ \int^T_0  \sup_{\theta \in \Theta} \left| \partial_{\theta} \frac{\left(\partial_{\theta}\lambda_s (\theta) \right)^{\otimes 2}}{\lambda_s^2(\theta)} \right| \lambda_s(\theta_0) ds \right]
\le C_{\Theta, 1} |\theta_0 - \theta|,
\end{eqnarray*}
where $C_{\Theta, 1}$ is a positive constant that does not depend on $T$. For the second term, we also get
\begin{eqnarray*}
\left| E\left[ \frac{1}{T} \int^T_0  \frac{\partial_{\theta}^2 \lambda_s(\theta)}{\lambda_s(\theta)} \left( \lambda_s(\theta_0) - \lambda_s(\theta) \right)ds \right] \right|
\le  \frac{|\theta_0 - \theta| }{T} E\left[ \int^T_0  \sup_{\theta \in \Theta} \left| \frac{\partial_{\theta}^2 \lambda_s(\theta)}{\lambda_s^2(\theta)} \right| \sup_{\theta \in \Theta} \left| \partial_{\theta}\lambda_s (\theta) \right|  ds \right]
\le C_{\Theta, 2} |\theta_0 - \theta|,
\end{eqnarray*}
where $C_{\Theta, 2}$ is a positive constant independent of $T$. Since we have assumed $g_T$ is non-singular for large $T$ in the conditon [A3], we may choose $\tilde{\Theta}$ and $T_0>0$ such that
\begin{eqnarray*}
\inf_{T>T_0, \theta \in \tilde{\Theta},|x|=1} \left| x' \nu_{a b} (\theta) \right|
&\ge& \inf_{T>T_0,|x|=1} \left| x' g_T \right| - \sup_{T>T_0, \theta \in \tilde{\Theta}}\left| g_T - E\left[ \frac{1}{T} \int^T_0  \frac{\left(\partial_{\theta}\lambda_s (\theta) \right)^{\otimes 2}}{\lambda_s^2(\theta)} \lambda_s(\theta_0) ds \right] \right|  \\
&&- \sup_{T>T_0, \theta \in \tilde{\Theta}} \left| E\left[ \frac{1}{T} \int^T_0  \frac{\partial_{\theta}^2 \lambda_s(\theta)}{\lambda_s(\theta)} \left( \lambda_s(\theta_0) - \lambda_s(\theta) \right)ds \right] \right| > 0.
\end{eqnarray*}

\subsubsection{Condition \textnormal{[B4]}}
Let $L$ and $q_1$  be positive constants with $L>1$ and $q_1 >  3L$. We apply Sobolev's inequality (see Theorem 4.12 of \cite{AdamsFournier}). We take any integer $q_3 > \max\left(3, \frac{q_1L}{q_1-3L}\right)$ and some constant $K(\Theta, q_3)$ such that
\begin{eqnarray*}
E\left[ \sup_{\theta \in \Theta} \left| \frac{1}{T} l_{a_1 \cdots a_4}(\theta) \right|^{q_3} \right]
&\le& K(\Theta, q_3)\left\{ \int_{\Theta} E\left[ \left| \frac{1}{T} l_{a_1 \cdots a_4}(\theta) \right|^{q_3} \right] d\theta + \int_{\Theta}  E\left[ \left| \frac{1}{T} \partial_{\theta} l_{a_1 \cdots a_4}(\theta) \right|^p \right] d\theta \right\}\\
&\lesssim& \sum_{k=1}^4 \sup_{\theta \in \Theta} E\left[ \left| \frac{1}{T} l_{a_1 \cdots a_4}(\theta) \right|^{q_3} \right].
\end{eqnarray*}
Let $Y'_t(\theta) = \left(X_t^{(1)}(\theta_0), X_t^{(1)}(\theta), X_t^{(2)}(\theta), X_t^{(3)}(\theta), X_t^{(4)}(\theta), X_t^{(5)}(\theta) \right)$. We may easily confirm that
\begin{eqnarray*}
\frac{1}{T}l_{a_1, \dots, a_4}(\theta)
= \frac{1}{T} \int_0^T \frac{p_1(Y'_s(\theta))}{\lambda_s^{8}(\theta)} d\tilde{N}_s + \frac{1}{T}  \int_0^T \frac{p_2(Y'_s(\theta))}{\lambda_s^{8}(\theta)} ds
\end{eqnarray*}
with some polynomial functions $p_1$ and $p_2$.  In a similar way as Lemma A.5 in \cite{ClinetYoshida}, we can prove that $\sup_t \| \sup_{\theta \in \Theta}Y'_t(\theta) \|_{L^p(P)} < \infty$ for any $p>1$. Then, like (\ref{B.H.}), we have
\begin{eqnarray*}
\sup_{T>0, \theta \in \Theta} E\left[ \left| \frac{1}{T} \int_0^T \frac{p_1(Y'_s(\theta))}{\lambda_s^{8}(\theta)} d\tilde{N}_s\right|^{2^k} \right]
< \infty
\end{eqnarray*}
for any $k \in \naturels$. On the other hand, by the Jensen's inequality,
\begin{eqnarray*}
\sup_{T>0, \theta \in \Theta} E\left[ \left| \frac{1}{T} \int_0^{T} \frac{p_2(Y'_s(\theta))}{\lambda_s^{8}(\theta)} ds \right|^p \right]
&\le& \sup_{T>0} \frac{1}{T} \int_0^{T} E\left[\sup_{\theta \in \Theta} \left| \frac{p_2(Y'_s(\theta))}{\lambda_s^{8}(\theta)} \right|^p \right] ds < \infty
\end{eqnarray*}
for any $p>1$. Therefore, the condition [B4] holds for any constant $q_3 > \max\left(3, \frac{q_1L}{q_1-3L}\right)$.

\subsubsection{Condition \textnormal{[C1]}}
In Theorem 4.6 of \cite{ClinetYoshida}, the convergence of moments is proved for $\sqrt{T}\big(\mle - \theta_0\big)$. The conditon [C1] directly follows from this statement. 


%
%

\section*{Acknowledgment}
I am deeply grateful to Professor Yoshida. Without his guidance and help, this article would not have been completed. This research was supported by FMSP program of The University of Tokyo and Japan Science and Technology Agency CREST JPMJCR14D7.

%
%

\bibliographystyle{plain}      
\bibliography{ref.bib}  

%
%

\end{document}